\newcommand{\WF}{\mathop{\mathrm{WF}}}
\newcommand{\VOL}{\mathop{\mathrm{Vol}}}
\newcommand{\ccc}{\mathop{\mathrm{c}}}
\newcommand{\low}{\mathop{\mathrm{low}}}
\newcommand{\dimm}{\mathop{\mathrm{dim}}}
\newcommand{\loc}{\mathop{\mathrm{loc}}}
\newcommand{\diag}{\mathop{\mathrm{diag}}}
\newcommand{\deter}{\mathop{\mathrm{det}}}
\newcommand{\Iden}{\mathop{\mathrm{Id}}}
\newcommand{\sgn}{\mathop{\mathrm{sgn}}} 
\newcommand{\ac}{\mathrm{ac}} 
\newcommand{\pp}{\mathrm{pp}} 
\newcommand{\hm}{\mathrm{HM}}
\newcommand{\Op}{\mathrm{Op}}
\newcommand{\re}{\mathop{\mathrm{Re}}} 
\newcommand{\im}{\mathop{\mathrm{Im}}} 
\newcommand{\tr}{\mathop{\mathrm{Tr}}}
\newcommand{\supp}{\mathop{\mathrm{supp}}}
\newcommand{\N}{\mathbb{N}} 
\newcommand{\R}{\mathbb{R}} 
\newcommand{\C}{\mathbb{C}} 
\newcommand{\T}{\mathbb{T}}
\numberwithin{equation}{section}
\theoremstyle{plain}
\newtheorem{thm}{Theorem}[section]
\newtheorem{proposition}[thm]{Proposition}
\newtheorem{lemma}[thm]{Lemma} 
\newtheorem{corollary}[thm]{Corollary}
\theoremstyle{definition} 
\newtheorem{defn}[thm]{Definition}
\newtheorem{assump}[thm]{Assumption}
 \newtheorem{remark}[thm]{Remark}
 \newtheorem*{remarks*}{Remarks}
\newtheorem*{remark*}{Remark}
\title[Semiclassical limit on scattering manifolds]{Semiclassical limit of orthonormal Strichartz estimates on scattering manifolds}
\author{Akitoshi Hoshiya}
\begin{document}
\maketitle

\begin{abstract}
We study a quantum and classical correspondence related to the Strichartz estimates. First we consider the orthonormal Strichartz estimates on manifolds with ends. Under the nontrapping condition we prove the global-in-time estimates on manifolds with asymptotically conic ends or with asymptotically hyperbolic ends. Then we show that, for a class of pseudodifferential operators including the Laplace-Beltrami operator on the scattering manifolds, such estimates imply the global-in-time Strichartz estimates for the kinetic transport equations in the semiclassical limit. As a byproduct we prove that the existence of a periodic stable geodesic breaks the orthonormal Strichartz estimates. In the proof we do not need any quasimode. As an application we show the small data scattering for the cutoff Boltzmann equation on nontrapping scattering manifolds.  
\end{abstract}

\section{Introduction and main results}\label{23110221}
\subsection{Introduction}\label{25371356}
The purpose of this paper is twofold. The first one is to see some effects of geometry on long-time behavior of solutions to the Schr\"odinger equation, especially the effects by trapped sets and conjugate points. The other is to see quantum and classical correspondence for the Strichartz estimates. As explained later in this section, these topics are not independent, but closely related with each other.

Let $(X, g)$ be a noncompact Riemannian manifold and $P$ be a self-adjoint operator on $L^2 (X, dg)$. The Strichartz estimates for the Schr\"odinger equation:
\begin{align}
\left\{
\begin{array}{l}
i\partial_t u (t)= Pu(t), \\
u(0)= u_{0}
\end{array}
\right.
\label{25381311}
\end{align}
are linear estimates for the solution $e^{-itP} u_0$ to (\ref{25381311}) in the Lebesgue spaces. Typical examples are inequalities of the following type (sometimes with a derivative loss).
\begin{align}
\|e^{-itP} u_0\|_{L^q _t L^r _z} \lesssim \|u_0\|_2.\label{25381329}
\end{align}
Here $\|\cdot\|_p = \|\cdot\|_{L^p (X, dg)}$ and $L^q _t L^r _z = L^q (\R_t; L^r (X, dg(z)))$ denote the standard Lebesgue norm and the Bochner space for $p, q, r \in [1, \infty]$. The important points are that (\ref{25381329}) represents the spacetime decay of the solution to (\ref{25381311}) if $q, r < \infty$ and also that it implies the smoothing effect by the propagator if $r >2$. The latter is a consequence of the fact that the initial data $u_0$ is only in $L^2 (X, dg)$ but the solution satisfies $u(t) \in L^r (X, dg)$ for almost all $t \in \R$. One of the simplest examples of $(X, g)$ and $P$ satisfying (\ref{25381329}) is the Euclidean space with the standard metric: $(\R^d, dz^2)$ and the Laplacian. It is shown in \cite{St, GV, Y, KT} that (\ref{25381329}) holds in the present situation if and only if $(q, r)$ satisfies the admissible condition: $q, r \in [2, \infty]$, $\frac{2}{q} = d(\frac{1}{2} - \frac{1}{r})$, $(d, q, r) \neq (2, 2, \infty)$. 
These results are generalized to some kinds of (not necessarily Riemannian) manifolds but it is known that geometry has large effects on (\ref{25381329}). In \cite{BGT} the (local-in-time) Strichartz estimates for the Laplace-Beltrami operator on general compact Riemannian manifolds are considered and in general there are derivative loss of order $\frac{1}{q}$. Similar results for nonelliptic operators are proved in \cite{MT}. Such derivative loss is a consequence of the fact that all geodesics on compact manifolds are trapped. Similar situations also happen on noncompact manifolds. On asymptotically Euclidean spaces, both the local and global-in-time Strichartz estimates are considered in \cite{BT1, BT2, Di, MMT, RZ, ST, Tat} without derivative loss under the nontrapping condition. Some of the above results also prove the Strichartz estimates without nontrapping conditions but instead have a derivative loss due to a trapped set. See \cite{Tai1} for similar results for operators associated with nondegenerate metrics. On scattering manifolds, or manifolds with asymptotically conic ends, the local-in-time estimates are proved in \cite{HTW, M1} for nontrapping metrics and in \cite{BGH} in the presence of a mild trapped set. The global-in-time estimates are considered in \cite{HZ} or in \cite{BM, ZZ2} for nontrapping or mild-trapping metrics respectively. Contrary to the above results, it is also commented without proof in \cite{BGH} that the existence of an elliptic stable non-degenerate periodic geodesic breaks the Strichartz estimates without loss (hence global-in-time estimates never hold). Such a phenomenon is a consequence of the existence of quasimodes concentrating on the periodic geodesic. See \cite{Ra, Chr1}, \cite{Chr2} or \cite{RT} for quasimodes construction by the WKB method, by the Weyl law or by the spherical harmonic concentrating on the equator of the sphere. We remark that \cite{Chr1} uses such quasimodes to show the sharpness of the Strichartz estimates on some warped product manifolds.

Note that the effect of a trapped set is not limited to the Strichartz estimates. In \cite{Do1} it is proved that on a connected complete Riemannian manifold, the local smoothing estimates:
\begin{align}
\|e^{it\Delta_{g}} u_0\|_{L^2 _{\loc} H^{\frac{1}{2}} _{\loc}} \lesssim \|u_0\|_2 \label{25313947}
\end{align}
hold if and only if the manifold is nontrapping. (\ref{25313947}) is often used in the proof of the (local-in-time) Strichartz estimates and the derivative loss in (\ref{25313947}) directly appears as a loss in the Strichartz estimates. However if the trapped set is mild in the sense of \cite{NZ, BGH}, the derivative loss in (\ref{25313947}) is at most logarithmic. Combining it with a semiclassical Strichartz estimate on a time interval of length $h |\log h|$ (\cite[Theorem 3.8]{BGH}), the Strichartz estimates in \cite{BGH} have no loss. Similar situations also occur concerning the smoothness of the fundamental solution $e^{it\Delta_{g}} (z, z')$. As in \cite{Do2, Tai2}, $e^{it\Delta_{g}} (z, z'), t \neq 0$ is smooth if the manifold is nontrapping or mild-trapping but not smooth for any $t \neq 0$ if there exists a certain quasimode (see \cite[Proposition 3.2]{Tai2} for the precise statement).

The Strichartz estimates are also considered on negatively curved manifolds. On nontrapping asymptotically hyperbolic manifolds the local or global-in-time Strichartz estimates are proved in \cite{B2} or in \cite{Che1} respectively (see also \cite{AP} for the exact hyperbolic space). In \cite{BGH} the Strichartz estimates without loss are proved on convex co-compact hyperbolic manifolds under the condition that the Hausdorff dimension of their limit sets are small. Without such smallness assumption it is also shown in \cite{W} that for the surface the derivative loss for the local-in-time estimates are arbitrarily small. Though the above positive results are considered on manifolds with funnel ends, it is shown in \cite{B1} that no local-in-time Strichartz estimates hold on surfaces with cusp ends. For generalizations to the Damek-Ricci spaces, which include symmetric spaces of noncompact type and rank one, we refer to \cite[Introduction]{B2}. 

In addition to trapped sets it is also known that conjugate points are sensitive to dispersive properties of solutions to the Schr\"odinger equation. In \cite{HW}, it is shown that on nontrapping scattering manifolds, if $(z, z')$ is a conjugate point (this means there exists a geodesic from $z$ to $z'$ such that $z'$ is conjugate to $z$), the following dispersive estimate may fail:
\begin{align*}
|e^{it\Delta_{g}} (z, z')| \lesssim |t|^{-\frac{d}{2}}.
\end{align*}
This is in contrast with the standard Euclidean space, which has no conjugate point and satisfies the dispersive estimates. Related to the effect by conjugate points, see \cite{JZ} for the dispersive estimates for the Schr\"odinger equation on a product cone $(0, \infty) \times Y$ where the conjugate radius of the closed manifold $Y$ is larger than $\pi$. Recently, in the case $Y = \rho \mathbb{S}^{d-1}$ with $\rho >0$ and $d \ge 3$, the dispersive estimates are proved under $\rho \ge 1$ (conjugate radius $\ge \pi$) and disproved under $\rho < 1$ and $\frac{1}{\rho} \notin 2 \N$ in \cite{Tai3}.

As a final remark on the known results, we refer to \cite{Che2} for the Strichartz estimates on noncompact manifolds with multiple conical singularities (without conjugate points) and to \cite{AP2, SSWZ, Zh1, ZZ1, ZZ3} for the Strichartz estimates for the wave and Klein-Gordon equations in similar geometric settings.

Now the first purpose of this paper is to study the effects by trapped sets and conjugate points on \textit{the orthonormal Strichartz estimates}, which are extensions of the ordinary Strichartz estimates and originated in \cite{FLLS}. For the free Laplacian on $\R^d$, they are the following inequalities:
\begin{align}
\left \| \sum_{j=0}^ \infty{\nu_j |e^{it\Delta}f_j|^2} \right\|_{L^{\frac{q}{2}} _t L^{\frac{r}{2}} _z} \lesssim \| \nu\|_{\ell^{\beta}},
\label{25314906}
\end{align}
where $\{f_j\}$ is any orthonormal system in $L^2 (\R^d)$ and $\nu = \{\nu_j\}$ is any complex-valued sequence. The exponent $(q, r)$ satisfies the admissible condition and $\beta \in [1, \infty)$ varies depending on $(q, r)$. If $d \ge 2$ and $r \in (2, \frac{2d}{d-2})$ we can take $\beta >1$ (see \cite{BHLNS}). One of the important points is that (\ref{25314906}) with $\beta =1$ is actually equivalent to (\ref{25381329}) with $P=-\Delta$. By the embedding of $\ell^{\beta}$ space, (\ref{25314906}) with $\beta >1$ is stronger than the ordinary Strichartz estimates. There are many results on the orthonormal Strichartz estimates if the Hamiltonian is a Fourier multiplier on $\R^d$. Furthermore (\ref{25314906}) is extended to the Schr\"odinger operator on $\R^d$ with general decaying potentials. We refer to \cite[{\S}1]{H3} for such results. However there are few results on manifolds except for $\R^d$. In \cite{N} the orthonormal Strichartz estimates are considered on $\T^d$ and recently extended to general compact manifolds in \cite{WZZ}. Concerning noncompact spaces, convex co-compact hyperbolic manifolds are considered in \cite{H3} but there seems to be no other result. In this paper we consider the orthonormal Strichartz estimates on manifolds with asymptotically conic ends or  with asymptotically hyperbolic ends. Positive results are explained in Subsection \ref{25371400} and negative results are introduced in Subsection \ref{25371403}.

Next we move on to the second purpose, quantum and classical correspondence. We recall that (\ref{25314906}) is an estimate for the solution to the Heisenberg equation. For a bounded operator $\gamma \in \mathcal{B} (L^2 (\R^d))$, $\rho _{\gamma} \in L^{1} _{\loc} (\R^d)$, \textit{the density} of $\gamma$ is formally defined by $\rho _{\gamma} (z) = K_{\gamma} (z, z)$, where $K_{\gamma} \in \mathcal{S}' (\R^{2d})$ is the Schwartz kernel of $\gamma$ (more precisely if there is a $f \in  L^{1} _{\loc} (\R^d)$ such that $\tr (\chi \gamma) = \int_{\R^d} f(z) \chi (z)dz$ for any characteristic function $\chi$ of a positive measure subset, $f$ is denoted by $\rho _{\gamma}$ or $\rho (\gamma)$ and called the density). By a direct computation, if we define $\gamma_{0} := \sum_{j=0}^{\infty} \nu_{j} | f_j \rangle \langle f_j | \in \mathfrak{S}^{\beta} (L^2 (\R^d))$ for $\nu = \{\nu_{j}\} \in \ell^{\beta}$ with $\beta \in [1, \infty)$ and for an orthonormal system $\{f_j\} \subset L^2 (\R^d)$, we have $\rho (e^{it\Delta} \gamma_{0} e^{-it\Delta}) = \sum_{j=0}^ \infty{\nu_j |e^{it\Delta}f_j|^2}$. Therefore (\ref{25314906}) is rewritten as
\begin{align}
\|\rho (e^{it\Delta} \gamma_{0} e^{-it\Delta})\|_{L^{\frac{q}{2}} _t L^{\frac{r}{2}} _z} \lesssim \|\gamma_0\|_{\mathfrak{S}^{\beta}}. \label{253161706}
\end{align}
Here $\mathfrak{S}^{\beta}$ denotes the Schatten class (see \cite{Si}). Since $e^{it\Delta} \gamma_{0} e^{-it\Delta}$ is a solution to the Heisenberg equation, (\ref{25314906}) is a smoothing estimate for the Heisenberg equation (restricted to the diagonal) in some sense. This rewriting also holds for general self-adjoint operators $P$, not limited to $\Delta$, especially for any self-adjoint operator $P=-\partial_{i} g^{ij} (z) \partial_{j}$ where $g^{ij} \in S^{0} (\R^d) := S \left(1, \frac{dz^2}{\langle z \rangle^2} \right)$. Furthermore, by the asymptotic formula $[a^w (z, hD_z), b^w (z, hD_z)] = \frac{h}{i} \{a, b\}^w (z, hD_z)$ mod $h^3 \Op S^{k+k'-3, l+l'-3}$, where $a \in S^{k, l}$ and $b \in S^{k', l'}$, the following equations correspond to each other in the semiclassical limit:
\begin{align*}
\left\{
\begin{array}{l}
ih\partial_t \gamma (t)= [h^2 P, \gamma (t)], \\
\gamma (0)=  \gamma_0,
\end{array}
\right.
\tag{Q}\label{253171257}
\end{align*}
\begin{align*}
\left\{
\begin{array}{l}
\partial_t f (t, z, \zeta)+ H_p f(t, z, \zeta)=0, \\
f (0, z, \zeta)=  f_0 (z, \zeta),
\end{array}
\right.
\tag{C}\label{253171256}
\end{align*}
where $P=-\partial_{i} g^{ij} (z) \partial_{j}$ is as above. Here, for $k, l \in \R$, $S^{k, l} := S \left(\langle \zeta \rangle^{k} \langle z \rangle^{l}, \frac{dz^2}{\langle z \rangle^2} + \frac{d\zeta^2}{\langle \zeta \rangle^2} \right)$ is the scattering symbol class denoted by H\"ormander's notation (see \cite{Hor}) and its quantization is defined as usual (\cite{Zw}):
\begin{align*}
a^w (z, hD_z)u(z) = \frac{1}{(2\pi h)^d} \int e^{\frac{i}{h}(z-z')\zeta} a \left(\frac{z+z'}{2}, \zeta \right) u(z') dz' d\zeta.
\end{align*}
In (\ref{253171256}), $p(z, \zeta) = g^{ij} (z) \zeta_{i} \zeta_{j}$ is the kinetic energy and $H_p =\frac{\partial p}{\partial \zeta} \frac{\partial}{\partial z} - \frac{\partial p}{\partial z} \frac{\partial}{\partial \zeta}$ is the Hamilton vector field. As a result it would be natural to consider that some smoothing estimate for the transport equation (\ref{253171256}) should be obtained from the orthonormal Strichartz estimates in the semiclassical limit, which are smoothing estimates for the Heisenberg equation (\ref{253171257}). This is known to be true for $P=-\Delta$, i.e. $g^{ij} = \delta_{ij}$. In \cite{Sab, BHLNS} the Strichartz estimates for the transport equation, equivalently the velocity average estimates, are deduced when $P=-\Delta$. Such results are extended to the fractional Schr\"odinger equation ($P= |D_z|^{\alpha}$) and the Klein-Gordon equation ($P= \langle D_z \rangle$) in \cite{BLN}. However there seems to be no result if $P$ is not a Fourier multiplier, especially no result if $g^{ij} \neq \delta_{ij}$. This comes from the lack of the orthonormal Strichartz estimates and an explicit formula of the solution to (\ref{253171257}) which is heavily used when $P=-\Delta, |D_z|^{\alpha}$ or $\langle D_z \rangle$. In Subsection \ref{25371403} we extend the result for $P=-\Delta$ to some pseudodifferential operators including $P=-\Delta_{g}$ where $g$ is a nontrapping scattering metric. Interestingly, as byproducts of such results, we obtain counterexamples of the orthonormal Strichartz estimates caused by trapped sets without constructing quasimodes. This is different from ordinary constructions of counterexamples for the Strichartz estimates.
\subsection{Results on orthonormal Strichartz estimates}\label{25371400}
In this subsection we explain a positive result on the orthonormal Strichartz estimates. First we state the assumptions on manifolds and potentials. A manifold $M^{\circ}$ (or a manifold with boundary $M$) satisfying Assumption \ref{25121723} is called a scattering manifold or a manifold with asymptotically conic ends. 
\begin{assump}\label{25121723}
$(M^{\circ}, g)$ is a $d$-dimensional noncompact complete Riemannian manifold with $d \ge 3$. There exists a compact subset $K \subset M^{\circ}$ such that $M^{\circ} \backslash K$ is diffeomorphic to $(0, \infty) \times Y$. Here $Y$ is a $(d-1)$-dimensional compact connected manifold. We also assume that there exists a compactification $M$ of $M^{\circ}$ such that $\partial M = Y$ and in a collar neighborhood of $\partial M$, $[0, \epsilon_{0})_{x} \times Y_{y}$, $g$ takes a form $g = \frac{dx^2}{x^4} + \frac{h(x)}{x^2}$. Here $h \in C^{\infty} ([0, \epsilon_0); S^2 T^* Y)$.
\end{assump}
We recall that $(M^{\circ}, g)$ is nontrapping if every geodesic $z: \R_t \to M^{\circ}$ goes to $\partial M$ as $t \to \pm \infty$.
\begin{assump}\label{25121738}
$V \in C^{\infty} (M)$ is a real-valued function satisfying $V(x, y) = \mathcal{O} (x^{2+ \epsilon})$ near $\partial M$ for some $\epsilon >0$. 
\end{assump}
A potential $V$ satisfying Assumption \ref{25121738} is said to be of very short-range type. To state our result, we use $L^p (M^{\circ}) = L^p (M^{\circ}, dg) = L^p (M^{\circ}, \sqrt{g} dz)$, where $g= \deter g$. Note that $-\Delta_{g} +V$ is essentially self-adjoint on $C^{\infty} _0 (M^{\circ})$ under Assumptions \ref{25121723} and \ref{25121738}.
\begin{thm}\label{251241323}
Let $(M^{\circ}, g)$ and $V$ be as in Assumptions \ref{25121723} and \ref{25121738}. Suppose $(M^{\circ}, g)$ is nontrapping. We also assume the Schr\"odinger operator $P= -\Delta_{g} +V$ has neither nonpositive eigenvalue nor zero resonance. Then
\begin{align*}
\left\| \sum_{j=0}^{\infty} \nu_j |e^{itP}  f_j|^2 \right\|_{L^{\frac{q}{2}} _t  L^{\frac{r}{2}} _z} \lesssim \|\nu\|_{\ell^{\beta}}
\end{align*}
holds for any orthonormal system $\{f_j\} \subset L^2 (M^{\circ}, dg)$ and any complex-valued sequence $\nu = \{\nu_{j}\}$. Here $\frac{d}{2}$-admissible pair $(q, r)$ and $\beta \in [1, \infty]$ satisfy either of the following conditions: If $r \in [2, \frac{2(d+1)}{d-1})$, then $\beta = \frac{2r}{r+2}$, and if $r \in [\frac{2(d+1)}{d-1}, \frac{2d}{d-2})$, then $\beta < \frac{q}{2}$.
\end{thm}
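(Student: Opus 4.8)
The plan is to deduce the estimate from the Frank--Sabin duality principle together with dyadically localized Schatten-class bounds for the propagator, placing ourselves exactly in the range where the ordinary global-in-time Strichartz estimates on nontrapping scattering manifolds are already available. The first ingredient is the spectral setup: the assumptions that $V$ is of very short-range type and that $P=-\Delta_g+V$ has neither a nonpositive eigenvalue nor a zero resonance force $P>0$, so that $e^{-itP}=\int_0^\infty e^{-it\lambda^2}\,dE_{\sqrt P}(\lambda)$ and the spectral measure is benign as $\lambda\downarrow 0$ by a low-energy resolvent expansion (here $d\ge 3$ and the absence of a zero resonance enter), while for $\lambda\gtrsim 1$ the nontrapping Hassell--Zhang parametrix for $dE_{\sqrt P}(\lambda)$ applies, the very short-range $V=\mathcal{O}(x^{2+\epsilon})$ being an admissible perturbation. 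After a dyadic localization $\chi_N(\sqrt P)$ this yields, uniformly in $\lambda\sim N$, both the ``$L^1\to L^\infty$ with oscillation'' structure of the kernel and the operator bound $\|\chi_N(\sqrt P)\,dE_{\sqrt P}(\lambda)\,\chi_N(\sqrt P)\|_{L^{r'}\to L^r}\lesssim \lambda^{d(1-2/r)-1}$ in the admissible range. Crucially these bounds persist through pairs of conjugate points, since the spectral measure is a one-parameter object and does not see the degeneration responsible for a possible failure of the pointwise dispersive estimate $|e^{-itP}(z,z')|\lesssim |t|^{-d/2}$.

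Next, duality. By the duality principle of Frank--Sabin for the density functional, the asserted inequality for every orthonormal system $\{f_j\}\subset L^2(M^\circ,dg)$ and every $\nu\in\ell^\beta$ is equivalent to
\begin{align*}
\bigl\|\,|W|^{1/2}\,e^{-itP}e^{isP}\,|W|^{1/2}\,\bigr\|_{\mathfrak{S}^{\beta'}}\lesssim \|W\|_{L^{\tilde q}_t L^{\tilde r}_z},\qquad \tilde q=(q/2)',\quad \tilde r=(r/2)',
\end{align*}
for the operator on $L^2(\R_t\times M^\circ)$ whose Schwartz kernel is $|W(t,z)|^{1/2}(e^{-i(t-s)P})(z,z')|W(s,z')|^{1/2}$. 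Inserting the spectral representation and a dyadic partition $1=\sum_N \chi_N(\sqrt P)^2$ (with one block supported near the origin), it suffices to prove this block by block, with an $N$-uniform constant in the convergent range and a slowly growing one in the borderline range.

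For a fixed block the required $\mathfrak{S}^{\beta'}$ bound is obtained by a Schatten-class version of the Keel--Tao argument, interpolating (the Stein complex-interpolation method for Schatten-class-valued analytic families, as adapted by Frank--Sabin) between two inputs: the operator-norm endpoint, which is the block form of the known global-in-time Strichartz estimate for $P$ from \cite{HZ, BM} and corresponds to $\beta=1$; and an $L^2$-based, Hilbert--Schmidt-type endpoint built from the oscillatory structure of the Hassell--Zhang parametrix for $dE_{\sqrt P}(\lambda)$, which plays the role of the curvature of the paraboloid in the Euclidean proof. This produces the estimate with $\beta=\frac{2r}{r+2}$ for $r\in[2,\frac{2(d+1)}{d-1})$, where moreover the dyadic blocks sum; for $r\in[\frac{2(d+1)}{d-1},\frac{2d}{d-2})$ the block sum at $\beta=\frac{2r}{r+2}$ is only logarithmically divergent, and interpolating once more against the $\beta=1$ estimate recovers every $\beta<q/2$. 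Undoing the duality then gives the theorem. (One may alternatively package the two preceding steps as an abstract orthonormal-Strichartz criterion driven by the microlocalized spectral-measure bounds above.)

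The main obstacle is the $L^2$-based endpoint of the interpolation and, behind it, the uniformity of the microlocalized spectral-measure estimates across conjugate points: one must verify that the possible failure of the pointwise dispersive estimate does not spoil the weighted Schatten bound, and that the $\langle z\rangle$-weights generated by the oscillatory kernel match exactly the mixed-Lebesgue exponents $(\tilde q,\tilde r)$ dual to a $\tfrac{d}{2}$-admissible pair $(q,r)$, so that the $t$- and $z$-integrations close. The nontrapping hypothesis is used precisely here --- equivalently through Kato local smoothing and the limiting absorption principle with uniform resolvent bounds --- since it is what makes the spectral-measure parametrix, hence all of the above, valid uniformly in $\lambda$ and globally in time.
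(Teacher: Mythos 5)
Your overall architecture — Frank--Sabin duality, Schatten bounds for $|W|^{1/2}e^{-i(t-s)P}|W|^{1/2}$ obtained by complex interpolation between an operator-norm input and a Hilbert--Schmidt input driven by the Hassell--Zhang spectral-measure parametrix, and the observation that the microlocalized kernel bound survives conjugate points — is exactly right and matches the paper in spirit. But there is a genuine gap at the step where you localize: you insert a Littlewood--Paley dyadic partition $1=\sum_N\chi_N(\sqrt P)^2$, prove the estimate ``block by block,'' and then assert that ``the dyadic blocks sum'' (respectively give only a logarithmic divergence). No mechanism is offered for that summation. In the Schatten setting the triangle inequality over infinitely many dyadic pieces does not close at the sharp $\beta=\frac{2r}{r+2}$, there is no scaling to reduce to $N=1$, and the Littlewood--Paley square function does not interact well with the off-diagonal operator $W_1TW_2\in\mathfrak S^{\beta'}$ — this is precisely the obstruction singled out in Remark~\ref{251261538}, where the paper notes that Littlewood--Paley arguments are effective for the ordinary Strichartz estimates but \emph{not} in the orthonormal setting, and cites this as the reason high-energy mild-trapping estimates do not follow.

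The paper sidesteps this entirely by using the Hassell--Zhang \emph{operator partition of unity} $\{Q_j(\lambda)\}_{j=1}^N$, which is a microlocal partition into a \emph{finite} number $N$ of pieces, with $N$ independent of the spectral parameter $\lambda$, and with the crucial property that each microlocalized propagator $U_j(t)U_j(s)^*=\int_0^\infty e^{i(t-s)\lambda^2}Q_j(\lambda)\,dE_{\sqrt P}(\lambda)\,Q_j(\lambda)^*\,d\lambda$ satisfies the full dispersive bound $\lesssim|t-s|^{-d/2}$ globally in $\lambda$ and $t$. This is what restores the FLLS-type argument: one runs the $(t-s)^\omega$ analytic family and Stein interpolation once per $U_j$ (with the $\epsilon$-regularization and Fatou/singular-value limiting step), then sums over the \emph{finitely many} $j=1,\dots,N$. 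No almost-orthogonality, no Littlewood--Paley theorem in $\mathfrak S^{\beta'}$ is needed. To repair your argument, replace the frequency dyadic decomposition by this finite microlocal one; the rest of your outline (interpolation endpoints, $\epsilon$-truncation, duality) then carries through. Finally, the potential $V$ is not absorbed into the parametrix in the paper: the $V=0$ case is proved first via the above, and $V$ is added afterward by the factorization $V=|V|^{1/2}\cdot(|V|^{1/2}\sgn V)$, the uniform weighted resolvent bound of Zhang--Zheng, and the abstract perturbation theorem of \cite{H1}; your low-energy expansion route would have to be checked independently, whereas the paper's route reuses known uniform resolvent estimates where the ``no nonpositive eigenvalue / no zero resonance'' hypotheses enter.
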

See Subsection \ref{253191110} for the definition of a $\frac{d}{2}$-admissible pair. The conditions on $(q, r, \beta)$ are exactly the same as in the case of $(M^{\circ}, g) = (\R^d, dz^2 )$ (see \cite{FLLS, FS, BHLNS}). Therefore our result is an extension of such known results to the setting of scattering manifolds. Moreover Theorem \ref{251241323} generalizes the ordinary Strichartz estimates for single function (\cite{HTW, M1, HZ}) to the orthonormal setting. The exponent $\beta$ cannot be taken larger than $\frac{2r}{r+2}$ if $r \in [2, \frac{2(d+1)}{d-1})$ since counterexamples are constructed in \cite{FLLS} for $(\R^d, dz^2 )$. If $r \in [\frac{2(d+1)}{d-1}, \frac{2d}{d-2})$, it is shown in \cite{FS2} that we cannot take $\beta > \frac{q}{2}$ on $(\R^d, dz^2 )$. Even in the orthonormal setting, the presence of conjugate points is irrelevant to the orthonormal Strichartz estimates. The reason is that we only need pointwise estimates for propagators restricted near the diagonal. The proof of Theorem \ref{251241323} is given in Section \ref{241127048} by proving some restriction estimates in the Schatten class for propagators microlocalized near the diagonal. We remark that the uniform Sobolev estimates in the Schatten class are proved in \cite{GHK} in the setting of nontrapping scattering manifolds. Such estimates are refinement of the ordinary uniform Sobolev estimates and used to prove the Lieb-Thirring inequalities for the Schr\"odinger operator with complex potentials, not limited to estimates on individual eigenvalues like the Keller-type bounds. The feasibility of estimating a sum of eigenvalues, which follows from the Schatten estimates for resolvents, corresponds to the extension of the Strichartz estimates to orthonormal systems, which also follows from the Schatten estimates for restriction operators. The result on nontrapping asymptotically hyperbolic manifolds is explained and proved in Subsection \ref{25271436}.
\subsection{Results on quantum and classical correspondence}\label{25371403}
In this subsection we consider quantum and classical correspondence for the Strichartz estimates. Using our main result, counterexamples for the orthonormal Strichartz estimates are also constructed. In the next theorem, our quantum Hamiltonian $P$ is not limited to the Laplace-Beltrami operator also including nonelliptic operators. We say $p \in S^{2, 0}$ is homogeneous of degree $2$ if $p(z, \lambda \zeta) = \lambda^2 p(z, \zeta)$ holds for any $(z, \zeta) \in T^* \R^d$ and $\lambda >0$.
\begin{thm}\label{2412231555}
Assume $p \in S^{2, 0}$ is real-valued, homogeneous of degree $2$, $H_p$ is complete on $T^* \R^d$ and $P = p^w (z, D_z)$ is essentially self-adjoint on $L^2 (\R^d)$ with its core $C^{\infty} _{0} (\R^d)$. If
\begin{align}
\left\| \sum_{j=0}^{\infty} \nu_j |e^{-itP}  f_j|^2 \right\|_{L^{\frac{q}{2}} _t  L^{\frac{r}{2}} _z} \lesssim \|\nu\|_{\ell^{\beta}}\label{2412231605}
\end{align}
holds for any orthonormal system $\{f_j\}$ in $L^2 (\R^d)$, any complex-valued sequences $\nu = \{\nu _j\}$ and some $(q, r, \beta)$ satisfying $q \in [2, \infty]$, $r \in [2, \infty)$, $\frac{2}{q} = d(\frac{1}{2} - \frac{1}{r})$ and $\beta = \frac{2r}{r+2}$, then
\begin{align}
\left\| \int_{\R^d} f \circ e^{-tH_p} (z, \zeta) d\zeta \right\|_{L^{\frac{q}{2}} _t  L^{\frac{r}{2}} _z} \lesssim \|f\|_{L^{\beta} _{z, \zeta}}\label{2412231609}
\end{align}
holds for the same $(q, r, \beta)$.
\end{thm}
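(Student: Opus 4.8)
The plan is to run the semiclassical limit in the spirit of \cite{Sab, BHLNS}: since $p$ is homogeneous of degree $2$, the classical flow $e^{-tH_p}$ is exactly the $h\to 0$ limit of a rescaled quantum flow, and \eqref{2412231605} at the dual exponent $\beta=\tfrac{2r}{r+2}$ produces a bound on quantum densities that is \emph{uniform} in $h$. First one reformulates and rescales. Writing a self-adjoint $\gamma\in\mathfrak{S}^\beta(L^2(\R^d))$ as $\gamma=\sum_j\nu_j|f_j\rangle\langle f_j|$ with $\{f_j\}$ an orthonormal eigenbasis gives $\rho(e^{-itP}\gamma e^{itP})=\sum_j\nu_j|e^{-itP}f_j|^2$ and $\|\gamma\|_{\mathfrak{S}^\beta}=\|\nu\|_{\ell^\beta}$, so \eqref{2412231605} is equivalent to
\[ \bigl\|\rho\bigl(e^{-i\tau P}\gamma e^{i\tau P}\bigr)\bigr\|_{L^{q/2}_\tau L^{r/2}_z}\lesssim\|\gamma\|_{\mathfrak{S}^\beta}\qquad(\gamma=\gamma^*\in\mathfrak{S}^\beta). \]
The change of variable $\zeta=h\eta$ in the Weyl quantization together with homogeneity of $p$ gives $p^w(z,hD_z)=h^2P$ for every $h>0$, so the semiclassical propagator $U_h(t):=e^{-\frac{i}{h}t\,p^w(z,hD_z)}$ equals $e^{-i(th)P}$, a time rescaling of the flow in \eqref{2412231605}.

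Next one builds the trial density operator. It suffices to treat $0\le f\in C^\infty_0(\R^{2d})$, the general complex $f$ following from $\bigl|\int f\circ e^{-tH_p}\,d\zeta\bigr|\le\int|f|\circ e^{-tH_p}\,d\zeta$, from $\||f|\|_{L^\beta}=\|f\|_{L^\beta}$, and from density of $C^\infty_0$ in $L^\beta$. For such $f$, with $\varphi^h_w$ the $L^2$-normalized coherent states, set $\gamma_0^h:=(2\pi h)^{-d}\int f(w)\,|\varphi^h_w\rangle\langle\varphi^h_w|\,dw\ge 0$; from $\|\gamma_0^h\|_{\mathfrak{S}^1}=(2\pi h)^{-d}\|f\|_{L^1}$ and $\|\gamma_0^h\|_{\mathcal B(L^2)}\le\|f\|_{L^\infty}$ (the latter via the resolution of identity $(2\pi h)^{-d}\int|\varphi^h_w\rangle\langle\varphi^h_w|\,dw=\Iden$), complex interpolation gives $\|\gamma_0^h\|_{\mathfrak{S}^\beta}\le(2\pi h)^{-d/\beta}\|f\|_{L^\beta}$. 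Applying the Schatten estimate to $\gamma_0^h$, using $e^{-i(th)P}\gamma_0^h e^{i(th)P}=U_h(t)\gamma_0^h U_h(t)^*=:\gamma^h(t)$, and undoing the rescaling $\tau=th$ (which multiplies the $L^{q/2}_t$-norm by $h^{2/q}$) yields
\[ \bigl\|(2\pi h)^d\,\rho(\gamma^h(t))\bigr\|_{L^{q/2}_t L^{r/2}_z}\lesssim h^{\,d-\frac{2}{q}-\frac{d}{\beta}}\,\|f\|_{L^\beta}. \]
Since $\frac{2}{q}=d\bigl(\frac12-\frac1r\bigr)$ and $\beta=\frac{2r}{r+2}$ force $\frac{d}{\beta}=d-\frac{2}{q}$, the exponent of $h$ vanishes and the bound is uniform for $h\in(0,1]$.

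Then one passes to the limit. The case $r=2$ ($q=\infty$) is elementary since $e^{tH_p}$ preserves $dz\,d\zeta$; for $r>2$ with $r<\frac{2d}{d-2}$ (so $q\in(2,\infty)$) the space $L^{q/2}_t L^{r/2}_z$ is reflexive and $(2\pi h)^d\rho(\gamma^h(\cdot))$ has a weak limit along a sequence $h_n\to 0$ (the Keel--Tao endpoint $r=\frac{2d}{d-2}$, $q=2$, is a routine limiting case). To identify the limit, pair against $\psi\in C^\infty_0(\R_t\times\R^d_z)$: since multiplication by $\psi(t,\cdot)$ is $(\psi(t,\cdot))^w(z,hD_z)$,
\[ (2\pi h)^d\iint\rho(\gamma^h(t))\,\psi\,dz\,dt=(2\pi h)^d\int\tr\bigl(\gamma_0^h\,U_h(t)^*(\psi(t,\cdot))^w U_h(t)\bigr)\,dt. \]
Egorov's theorem, valid here because $\supp_t\psi$ is compact and $H_p$ is complete, gives $U_h(t)^*(\psi(t,\cdot))^w U_h(t)=(\psi(t,\cdot)\circ e^{tH_p})^w+O_{\mathcal B(L^2)}(h)$ uniformly on $\supp_t\psi$ (with $\psi(t,\cdot)$ read as a $\zeta$-independent symbol), the remainder contributing $O\!\bigl((2\pi h)^d\,h\,\|\gamma_0^h\|_{\mathfrak{S}^1}\bigr)=O(h)\to 0$, while the coherent-state calculus gives $(2\pi h)^d\tr(\gamma_0^h\,a^w(z,hD_z))\to\int f\,a\,dw$. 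Hence the pairing converges to $\iint f(z,\zeta)\,(\psi(t,\cdot)\circ e^{tH_p})(z,\zeta)\,dz\,d\zeta\,dt$, and the substitution $(z,\zeta)\mapsto e^{-tH_p}(z,\zeta)$ turns this into $\iint\psi(t,z)\bigl(\int f(e^{-tH_p}(z,\zeta))\,d\zeta\bigr)\,dz\,dt$. So the weak limit is $\int_{\R^d}f\circ e^{-tH_p}(z,\zeta)\,d\zeta$, independently of the chosen sequence, and lower semicontinuity of the norm under weak convergence yields \eqref{2412231609}.

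The main obstacle is the exact cancellation of the power of $h$ in the uniform bound: this is precisely what pins down $\beta=\frac{2r}{r+2}$ and is why one must feed in the sharp orthonormal (not merely an ordinary) Strichartz estimate. On the technical side the delicate ingredients are the coherent-state Schatten bound and an Egorov theorem adapted to the scattering class $S^{2,0}$ with a possibly non-elliptic but complete Hamilton flow (so that flowed test symbols stay in a controlled class); it is worth stressing that only local-in-time information (tested against $C^\infty_0$) enters the identification of the limit, whereas the uniform-in-$h$ bound is global-in-time and is supplied for free by \eqref{2412231605}, so no global-in-time control of Egorov remainders is required.
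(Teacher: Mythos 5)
Your proposal is correct in broad outline and takes a genuinely different route from the paper's. The paper places the semiclassical structure on the \emph{initial datum}: it takes $\gamma_0^h=f^w(x,hD_x)$, builds an explicit $N$-term transport parametrix for $e^{-itP/h}f^w(x,hD_x)e^{itP/h}$ (Lemma \ref{2412231610}), reads off the density pointwise, bounds the remainder in trace norm, and then uses a diagonal subsequence plus Fatou to pass to the limit. You instead take a coherent-state (anti-Wick) initial datum $\gamma_0^h=(2\pi h)^{-d}\int f(w)|\varphi_w^h\rangle\langle\varphi_w^h|dw$, use positivity and Schatten interpolation to get the exact bound $\|\gamma_0^h\|_{\mathfrak{S}^\beta}\le(2\pi h)^{-d/\beta}\|f\|_{L^\beta}$ (with no $O(h^{1/2})$ error, which the paper's use of the BHLNS Weyl-quantization bound does incur), push the Egorov step onto the \emph{test function}, and extract the limit by weak compactness in a reflexive $L^{q/2}_tL^{r/2}_z$ and lower semicontinuity of the norm. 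What your route buys is a cleaner Schatten bound and a conceptually tidier limit passage (no diagonal argument); what it costs is two places where the paper's route is more robust. First, the Egorov step: you apply it to $(\psi(t,\cdot))^w$ where $\psi$ is $\zeta$-independent, so the symbol is not compactly supported in $\zeta$; the flowed symbol $\psi(t,\cdot)\circ e^{tH_p}$ involves, at frequency $|\zeta|\sim R$, the unit-speed flow for time $\sim tR$, and with no nontrapping hypothesis its membership in a symbol class with $O(h)$ Egorov remainder uniformly on $\supp_t\psi$ is not automatic. This is repairable (insert a $\zeta$-cutoff and use that $\gamma_0^h$ concentrates near the compact $\supp f$, or run Egorov on the initial-datum side after converting anti-Wick to Weyl), but as written it is a genuine gap; the paper sidesteps it entirely because $f\circ e^{-tH_p}$ is compactly supported for each $t$. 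Second, your weak-compactness argument excludes the $q=2$ endpoint $(r=\tfrac{2d}{d-2})$, which the theorem allows; this you label routine, and indeed Fatou from a.e.\ convergence (the paper's mechanism) covers $q=2$ with no change, but you should say so rather than appeal to reflexivity there. (Also, a minor typo: undoing $\tau=th$ multiplies the $L^{q/2}_t$-norm by $h^{-2/q}$, not $h^{2/q}$; your final exponent $h^{d-2/q-d/\beta}$ is nonetheless correct.)
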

Note that $f_0 \circ e^{-tH_p} (z, \zeta)$ is a solution to the transport equation (\ref{253171256}). (\ref{2412231609}) is called the velocity average estimate for the transport equation. Our result is a generalization of \cite{Sab}, where $P=-\Delta$ is considered. See Corollary \ref{251282306} for applications to the Laplace-Beltrami operator on nontrapping scattering manifolds.
\begin{remark}\label{2412231614}
One of the important points in Theorem \ref{2412231555} is that we do not need geometric assumptions except for the completeness of $H_p$ and the essential self-adjointness of $P$, which imply the classical and quantum well-posedness. For example we do not need nontrapping conditions or absence of conjugate points. This is crucial in Section \ref{2412182325}, where counterexamples for the orthonormal Strichartz estimates are proved by using Theorem \ref{2412231555}. Note that if $p$ is elliptic (which means $|p(z, \zeta)| \gtrsim |\zeta|^2$ uniformly in $(z, \zeta) \in T^* \R^d$), the completeness of $H_p$ follows from the conservation of energy. The quantum completeness follows from a standard parametrix construction. If we consider a nonelliptic symbol $p(z, \zeta) = \zeta^2 _{1} + \dots +\zeta^2 _{k} - \zeta^2 _{k+1} - \dots - \zeta^2 _{d}$, the classical completeness follows from an explicit formula of $e^{tH_p}$. The quantum completeness is a consequence of the fact that $P$ is a Fourier multiplier of a real-valued symbol. The orthonormal Strichartz estimates (\ref{2412231605}) follow from the dispersive estimates $|e^{-itP} (z, z')| \lesssim |t|^{-\frac{d}{2}}$ and \cite[Theorem 1.1]{H3}. We leave the case of nonelliptic variable coefficient operators for future work. 
\end{remark}
To state the Strichartz estimates for the transport equation, we define the KT-admissible quadruplet, which appears in the exponent of the Strichartz estimates.
\begin{defn}\label{2412232249}
A quadruplet $(q, r, p, a) \in [1, \infty]^4$ is called a KT-admissible quadruplet if $a= \hm (p, r), \frac{1}{q} = \frac{d}{2} (\frac{1}{p} - \frac{1}{r}), p_{*} (a) \le p \le a \le r \le r_{*} (a)$ and $(q, r, p, d) \neq (a, \infty, \frac{a}{2}, 1)$ hold. Here $\hm (p, r)$ denotes the harmonic mean of $p$ and $r$, i.e. $\hm (p, r)^{-1} = \frac{1}{2} (\frac{1}{p} + \frac{1}{r})$. If $\frac{d+1}{d} \le a \le \infty$, then $(p_{*} (a), r_{*} (a)) = (\frac{da}{d+1}, \frac{da}{d-1})$. If $1 \le a \le \frac{d+1}{d}$, then $(p_{*} (a), r_{*} (a)) = (1, \frac{a}{2-a})$. A KT-admissible quadruplet $(q, r, p, a)$ is called the endpoint if $(q, r, p) = (a, r_{*} (a), p_{*} (a))$ and $\frac{d+1}{d} \le a < \infty$ hold.
\end{defn}
\begin{thm}[Strichartz estimates for transport equations]\label{253201908}
Let $p \in S^{2, 0}$ be as in Theorem \ref{2412231555}. Suppose (\ref{2412231605}) holds for any $(q, r, \beta)$ satisfying $q, r \in [2, \infty], r \in [2, \frac{2(d+1)}{d-1}), \frac{2}{q} = d(\frac{1}{2} - \frac{1}{r})$ and $\beta = \frac{2r}{r+2}$. Then
\begin{align}
\|f \circ e^{-tH_p}\|_{L^q _t L^r _z L^p _{\zeta}} \lesssim \|f\|_{L^a _{z, \zeta}}\label{24122323040}
\end{align}
holds for any non-endpoint KT-admissible quadruplet $(q, r, p, a)$. Furthermore if $(q, r, p, a)$ and $(\tilde{q}, \tilde{r}, \tilde{p}, a')$ are non-endpoint KT-admissible quadruplets, then
\begin{align}
\left\| \int_{0}^{t} F(s) \circ e^{-(t-s)H_p} (z, \zeta) ds \right\|_{L^q _t L^r _z L^p _{\zeta}} \lesssim \|F\|_{L^{\tilde{q}'} _t L^{\tilde{r}'} _z L^{\tilde{p}'} _{\zeta}} \label{24122323550}
\end{align}
holds.
\end{thm}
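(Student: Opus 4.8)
The plan is to deduce both (\ref{24122323040}) and (\ref{24122323550}) from just two ingredients --- the velocity average estimate (\ref{2412231609}), which is available under the present hypothesis by Theorem \ref{2412231555}, together with the trivial conservation law --- by means of complex interpolation and a $TT^{*}$ argument, with no further dispersive input. First I would record the two building-block estimates. Since $H_{p}$ is a Hamiltonian vector field, $\{e^{tH_{p}}\}_{t\in\R}$ is a one-parameter group of diffeomorphisms of $T^{*}\R^{d}$ preserving the Liouville measure $dz\,d\zeta$, so $f\mapsto f\circ e^{-tH_{p}}$ is an isometry of $L^{a}_{z,\zeta}$ for every $t$ and every $a\in[1,\infty]$; this gives $\|f\circ e^{-tH_{p}}\|_{L^{\infty}_{t}L^{a}_{z}L^{a}_{\zeta}}=\|f\|_{L^{a}_{z,\zeta}}$, i.e.\ (\ref{24122323040}) for the KT-admissible quadruplet $(\infty,a,a,a)$. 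On the other hand the hypothesis gives (\ref{2412231605}) for all $(q,r,\beta)$ with $q,r\in[2,\infty]$, $r\in[2,\tfrac{2(d+1)}{d-1})$, $\tfrac{2}{q}=d(\tfrac12-\tfrac1r)$, $\beta=\tfrac{2r}{r+2}$, so Theorem \ref{2412231555} supplies (\ref{2412231609}) for the same exponents; applying (\ref{2412231609}) with $f$ replaced by $|f|$ and using $|f|\circ e^{-tH_{p}}=|f\circ e^{-tH_{p}}|$ turns the velocity integral into the $L^{1}_{\zeta}$-norm, so that $\|f\circ e^{-tH_{p}}\|_{L^{q/2}_{t}L^{r/2}_{z}L^{1}_{\zeta}}\lesssim\|f\|_{L^{2r/(r+2)}_{z,\zeta}}$. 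Writing $(Q_{0},R_{0})=(q/2,r/2)$ and $a_{0}=\tfrac{2r}{r+2}$, one checks at once that $a_{0}=\hm(1,R_{0})$, $\tfrac{1}{Q_{0}}=\tfrac{d}{2}(1-\tfrac1{R_{0}})$, $R_{0}=r_{*}(a_{0})$, $p_{*}(a_{0})=1$ and $a_{0}\in[1,\tfrac{d+1}{d})$; so this is precisely (\ref{24122323040}) for the KT-admissible quadruplets $(Q_{0},R_{0},1,a_{0})$ with $a_{0}\in[1,\tfrac{d+1}{d})$.

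Next I would interpolate. Both relations defining KT-admissibility, $a=\hm(p,r)$ and $\tfrac1q=\tfrac d2(\tfrac1p-\tfrac1r)$, are affine in $(\tfrac1q,\tfrac1r,\tfrac1p,\tfrac1a)$, so in the coordinates $(u,v)=(\tfrac1p,\tfrac1r)$ the set of KT-admissible quadruplets is a closed triangle $\Delta$ with vertices $(0,0)$, $(1,\tfrac{d-1}{d+1})$, $(1,1)$, and a short computation with $p_{*},r_{*}$ shows that the endpoint quadruplets fill the edge joining $(0,0)$ to $(1,\tfrac{d-1}{d+1})$ minus the vertex $(0,0)$, that the conservation-law quadruplets $(\infty,a,a,a)$ fill the edge joining $(1,1)$ to $(0,0)$, and that the velocity-average quadruplets found above fill the edge joining $(1,1)$ to $(1,\tfrac{d-1}{d+1})$ minus the latter vertex. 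Hence every non-endpoint KT-admissible quadruplet is, in the coordinates $(\tfrac1q,\tfrac1r,\tfrac1p,\tfrac1a)$, a convex combination of one conservation-law quadruplet and one velocity-average quadruplet, and therefore (\ref{24122323040}) follows for all non-endpoint KT-admissible $(q,r,p,a)$ by complex interpolation of the fixed linear operator $f\mapsto f\circ e^{-tH_{p}}$ between the two building-block bounds (mixed-norm Lebesgue spaces interpolating as expected).

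For the inhomogeneous estimate I would invoke the $TT^{*}$ identity. With $T\colon f_{0}\mapsto(t\mapsto f_{0}\circ e^{-tH_{p}})$, the group law together with measure-preservation gives $T^{*}F=\int_{\R}F(s,\cdot)\circ e^{sH_{p}}\,ds$ and hence $TT^{*}F(t,z,\zeta)=\int_{\R}F(s,\cdot)\circ e^{-(t-s)H_{p}}(z,\zeta)\,ds$, the full-line Duhamel operator; so for non-endpoint KT-admissible $(q,r,p,a)$ and $(\tilde q,\tilde r,\tilde p,a')$ one has $\|TT^{*}F\|_{L^{q}_{t}L^{r}_{z}L^{p}_{\zeta}}\lesssim\|F\|_{L^{\tilde q'}_{t}L^{\tilde r'}_{z}L^{\tilde p'}_{\zeta}}$ with constant $\|T\|_{L^{a}_{z,\zeta}\to L^{q}_{t}L^{r}_{z}L^{p}_{\zeta}}\cdot\|T\|_{L^{a'}_{z,\zeta}\to L^{\tilde q}_{t}L^{\tilde r}_{z}L^{\tilde p}_{\zeta}}$, both factors finite by the previous step; truncating $\int_{\R}$ to $\int_{0}^{t}$ is then done with the Christ--Kiselev lemma, which applies because $q>\tilde q'$ for any two non-endpoint KT-admissible quadruplets. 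This yields (\ref{24122323550}). Since the analytically substantial step --- the orthonormal Strichartz estimate, hence (\ref{2412231609}) --- is already provided by Theorem \ref{2412231555}, the only real obstacle here is the bookkeeping: identifying the non-endpoint KT-admissible region as the triangle $\Delta$ with its endpoint edge removed, so that interpolation reaches exactly this range (and no more), and confirming the strict inequality $q>\tilde q'$ for the Christ--Kiselev step. Both are elementary but must be done carefully, case by case.
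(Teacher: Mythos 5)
Your proposal is correct, but it takes a genuinely different route from the paper's on both halves of the theorem. For the homogeneous estimate (\ref{24122323040}), the paper does not interpolate at all: after dispatching the degenerate cases $a=\infty$, $q=\infty$, $p=\infty$, $r=\infty$ separately (the $q=\infty$ case being Liouville), it substitutes $|f|^{p}$ for $f$, which converts $\|f\circ e^{-tH_p}\|_{L^q_t L^r_z L^p_\zeta}$ into $\| \int (|f|^p)\circ e^{-tH_p}\,d\zeta\|_{L^{q/p}_t L^{r/p}_z}^{1/p}$ and $\|f\|_{L^a_{z,\zeta}}$ into $\||f|^p\|_{L^{a/p}_{z,\zeta}}^{1/p}$; one then checks that $(2q/p,\,2r/p,\,a/p)$ satisfies the hypotheses of Theorem \ref{2412231555} with $2r/p\in[2,\tfrac{2(d+1)}{d-1})$ precisely when the quadruplet is non-endpoint, so a single application of the velocity-average estimate finishes the proof. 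Your alternative --- interpolating the fixed operator $f\mapsto f\circ e^{-tH_p}$ between the Liouville edge and the $p=1$ velocity-average edge of the KT-admissible triangle --- is valid (the exponent bookkeeping you sketch does reproduce both the harmonic-mean relation $1/a=\tfrac12(1/p+1/r)$ and $1/q=\tfrac d2(1/p-1/r)$ along the interpolation segments, and complex interpolation of iterated mixed $L^p$ spaces does behave as expected), but it is strictly more machinery than the paper's substitution, and it requires the careful geometric check that the convex hull of your two edges is exactly the triangle with the endpoint edge removed, which you correctly flag as the delicate point. For the inhomogeneous estimate (\ref{24122323550}), your $TT^{*}$ computation and your justification of $q>\tilde q'$ for a pair of non-endpoint quadruplets are both correct, but the paper avoids Christ--Kiselev entirely: since $F\mapsto F\circ e^{-(t-s)H_p}$ is positivity-preserving, it simply bounds the truncated Duhamel integral pointwise by $\int_{\R}|F(s)|\circ e^{-(t-s)H_p}\,ds=TT^{*}|F|$ and applies the $TT^{*}$ bound to $|F|$, which is both shorter and does not require the transversality $q>\tilde q'$. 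So your proof is a correct, somewhat heavier alternative; the paper's exploits two structural features specific to transport --- that the flow preserves $|f|^p$ and that the propagator preserves positivity --- to get around both interpolation and Christ--Kiselev.
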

In the simplest case, $p(z, \zeta) = |\zeta|^2$, (\ref{24122323040}) fails at the endpoint (see \cite{BBGL}). Moreover by \cite{O1}, (\ref{24122323040}) holds if and only if $(q, r, p, a)$ is a non-endpoint KT-admissible quadruplet. We also refer to \cite{BLNS, CP, GP, HCFH, KT, O2} for the dispersive or Strichartz estimates when $p(z, \zeta) = |\zeta|^2$. On the other hand there are few results for variable coefficient operators or on manifolds. In $1$-dimensional case, some weighted Strichartz estimates are proved for $p(z, \zeta) = g(z) \zeta^2$ with $g \sim 1$ in \cite{Sal1}. In higher dimensions, if $p(z, \zeta) = g^{ij} (z) \zeta_{i} \zeta_{j}$, $g^{ij}$ is a compactly supported perturbation of $\delta_{ij}$ and $e^{tH_p}$ is nontrapping, (\ref{24122323040}) is proved in \cite{Sal2}. However if $g^{ij}$ is a long-range perturbation of $\delta_{ij}$ and $e^{tH_p}$ is nontrapping, the estimates obtained in \cite{Sal2} are local-in-time and have derivative loss. The following result is a refinement of such estimates in the setting of scattering metrics, including long-range perturbations.
\begin{corollary}\label{253202133}
Let $g$ be a nontrapping scattering metric on $\R^d$ with $d \ge 3$, which satisfies $|\partial^{\alpha} _z g_{ij} (z)| \lesssim \langle z \rangle^{-|\alpha|}$ for any $1 \le i, j \le d$ and $\alpha \in \N^d _0$. Set $p(z, \zeta) = g^{ij} (z) \zeta_{i} \zeta_{j} \in S^{2, 0}$. Then (\ref{24122323040}) and (\ref{24122323550}) hold for any non-endpoint KT-admissible quadruplets $(q, r, p, a)$ and $(\tilde{q}, \tilde{r}, \tilde{p}, a')$.
\end{corollary}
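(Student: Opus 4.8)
The plan is to obtain Corollary~\ref{253202133} from Theorem~\ref{253201908}, applied with $p(z,\zeta) = g^{ij}(z)\zeta_i\zeta_j$. Two things then have to be supplied: that this $p$ meets the standing hypotheses of Theorems~\ref{2412231555} and~\ref{253201908}, and that the orthonormal Strichartz estimate~(\ref{2412231605}) holds for $P = p^w(z,D_z)$ in the range $r \in [2,\frac{2(d+1)}{d-1})$, $\beta = \frac{2r}{r+2}$ with $\frac{d}{2}$-admissible $(q,r)$. Once both are in hand, Theorem~\ref{253201908} immediately yields~(\ref{24122323040}) and~(\ref{24122323550}) for all non-endpoint KT-admissible quadruplets, which is the claim. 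All the substance sits in the second point, which I would reduce to Theorem~\ref{251241323}.

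Checking the hypotheses on $p$ is routine. As $g$ is a scattering metric, $(g_{ij})$ is uniformly positive definite and $|\partial^\alpha_z g_{ij}(z)| \lesssim \langle z\rangle^{-|\alpha|}$, whence the inverse matrix obeys the same bounds and $p \in S^{2,0}$ is real-valued and homogeneous of degree $2$. Ellipticity $p(z,\zeta) \gtrsim |\zeta|^2$ together with the fact that $p$ is constant along the flow of $H_p$ keeps $|\zeta(t)|$ bounded on each energy surface, while $|\dot z(t)| = |\partial_\zeta p| \lesssim |\zeta(t)|$ stays bounded; hence no integral curve of $H_p$ escapes in finite time and $H_p$ is complete. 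Finally, the Weyl calculus gives $p^w(z,D_z) = -\partial_i g^{ij}(z)\partial_j - \frac14(\partial_i\partial_j g^{ij})(z)$, a symmetric operator, and conjugation by the unitary $u \mapsto (\deter g)^{1/4}u$ from $L^2(\R^d, dg)$ onto $L^2(\R^d, dz)$ identifies $P$ with $-\Delta_g + \tilde V$, where $\tilde V \in C^\infty(M)$ is real-valued with $\tilde V = \mathcal{O}(\langle z\rangle^{-2})$ (it collects at most two derivatives of $g$, from $-\frac14\partial_i\partial_j g^{ij}$ and from the density conjugation). Completeness of $(\R^d, g)$ and boundedness of $\tilde V$ then give essential self-adjointness of $-\Delta_g + \tilde V$, hence of $P$, on $C^\infty_0(\R^d)$.

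For the orthonormal Strichartz estimate~(\ref{2412231605}) for $P$, the unitary equivalence of the previous paragraph lets me work instead with $-\Delta_g + \tilde V$ on $L^2(\R^d, dg)$. Radially compactified, $(\R^d, g)$ is a scattering manifold in the sense of Assumption~\ref{25121723} (with $Y = \mathbb{S}^{d-1}$), nontrapping by hypothesis. Invoking Theorem~\ref{251241323} for $-\Delta_g + \tilde V$ then produces exactly~(\ref{2412231605}) for every $\frac{d}{2}$-admissible $(q,r)$ with $r \in [2,\frac{2(d+1)}{d-1})$ and $\beta = \frac{2r}{r+2}$, which is the range Theorem~\ref{253201908} requires; applying Theorem~\ref{253201908} closes the argument.

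I expect the real difficulty to lie precisely in this last reduction. On the one hand, $\tilde V = \mathcal{O}(\langle z\rangle^{-2})$ only barely misses the very short-range rate $\mathcal{O}(x^{2+\epsilon})$ of Assumption~\ref{25121738}, so one must either appeal to the pseudodifferential version of Theorem~\ref{251241323} that handles $P = p^w(z,D_z)$ directly (the operator class announced in the abstract), or use whatever extra decay $\langle z\rangle^{-\epsilon}$ the scattering structure of $g$ imposes on $\partial^2 g$. On the other hand, Theorem~\ref{251241323} needs $-\Delta_g + \tilde V$ to have no nonpositive eigenvalue and no zero resonance; for the bare nontrapping scattering Laplacian in dimension $d \ge 3$ this is standard (Rellich-type unique continuation and the Hardy inequality, the latter also absorbing the decaying $\tilde V$), but one must check it persists under the perturbation. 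Reconciling the zeroth- and first-order remainders of the Weyl and conjugation reduction with the admissible perturbation class of Theorem~\ref{251241323}, and propagating the nontrapping and spectral conditions through it, is the work; the rest is bookkeeping.
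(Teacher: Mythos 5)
Your route is genuinely different from the paper's, and it has a real gap at the spectral hypothesis of Theorem~\ref{251241323}. You want to verify~(\ref{2412231605}) literally for $P=p^w(z,D_z)$ by conjugating to $-\Delta_g+\tilde V$ on $L^2(\R^d,dg)$; but Theorem~\ref{251241323} requires $-\Delta_g+\tilde V$ to have no nonpositive eigenvalue and no zero resonance, and for your $\tilde V$ (determined by second derivatives of $g$) there is no smallness available. The Hardy inequality does not absorb a perturbation of unrestricted size; indeed, $p^w(z,D_z)$ need not be a nonnegative operator at all (Weyl quantization does not preserve positivity), so negative eigenvalues are a genuine possibility. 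If one exists with normalized eigenfunction $f_0$, then $|e^{-itP}f_0|^2$ is time-independent and~(\ref{2412231605}) fails for every $q<\infty$, so your reduction through Theorem~\ref{253201908} applied to this particular $P$ is a dead end. You flag this as ``the work,'' but it is not a routine verification — it can actually fail.

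The paper (Corollary~\ref{251282306}) avoids this by never attempting~(\ref{2412231605}) for $p^w(z,D_z)$. The observation is that the semiclassical-limit argument of Theorem~\ref{2412231555} and the parametrix of Lemma~\ref{2412231610} only see the semiclassical \emph{principal} symbol $p$ of $h^2 P$, so the same argument runs with $P=-\Delta_g$ on $L^2(\R^d,dg)$ directly: the transport hierarchy is solved against $[h^2(-\Delta_g),\cdot]$, the extra lower-order terms of $-h^2\Delta_g$ (relative to $h^2 p^w(z,hD_z)$) are simply absorbed into the $\psi_j$, $j\ge 1$, and the density $\rho(\cdot)_g$ is computed against $dg$. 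The orthonormal Strichartz input is then Theorem~\ref{251241323} with $V=0$ — for which the ``no nonpositive eigenvalue, no zero resonance'' hypothesis is automatic, since $-\Delta_g\geq 0$ on a nontrapping scattering manifold with $d\ge 3$. Since $\deter g\sim 1$, all the $L^p(dg)$ and $L^p(dz)$ norms and Schatten norms are comparable, which is the only extra bookkeeping needed. Your secondary worry about $\tilde V=\mathcal{O}(\langle z\rangle^{-2})$ versus the very-short-range rate is comparatively minor (for a scattering metric $\partial_i\partial_j g^{ij}$ typically gains an extra power of decay), and becomes moot once you adopt $P=-\Delta_g$ as the quantization, as the paper does.
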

We remark that \cite{BF} considers the Strichartz estimates for the sub-Laplacian on the Heisenberg group. For special initial data, solutions to the Schr\"odinger equation behave like those to a transport equation. Hence \cite[Theorem 1.1]{BF} contains a Strichartz-type estimate for a transport equation with special initial data. \cite{VRVR} considers pointwise decay estimates in time for the velocity average of solutions to the transport equation on $2$D nontrapping asymptotically hyperbolic manifolds. Since their results do not contain decay in space, it seems difficult to derive the Strichartz estimates. Our proof of Corollary \ref{253202133} and the orthonormal Strichartz estimates Theorem \ref{252101626} may apply to the nontrapping asymptotically hyperbolic setting. Now we consider counterexamples for the orthonormal Strichartz estimates caused by trapped sets.
\begin{defn}\label{253211014}
We say that the sharp orthonormal Strichartz estimates fail if and only if for any $(q, r, \beta)$ satisfying $q, r \in [2, \infty), r \in [2, \frac{2(d+1)}{d-1}), \frac{2}{q} = d(\frac{1}{2} - \frac{1}{r})$ and $\beta = \frac{2r}{r+2}$, (\ref{2412231605})
does not hold uniformly in orthonormal $\{f_j\} \subset L^2 (\R^d)$ and $\nu = \{\nu_{j}\}$.
\end{defn}
\begin{thm}\label{25371352}
Let $p \in S^{2, 0}$ be real-valued, homogeneous of degree $2$, $H_p$ be complete on $T^* \R^d$ and $P = p^w (z, D_z)$ be essentially self-adjoint on $L^2 (\R^d)$ with its core $C^{\infty} _{0} (\R^d)$.

\noindent (i) Assume $d=1$. If there exists a periodic trajectory $\gamma \subset T^* \R$ associated to $H_p$, the sharp orthonormal Strichartz estimates fail for $P$.

\noindent (ii) If there exists a periodic stable trajectory $\gamma \subset T^* \R^d$ associated to $H_p$, the sharp orthonormal Strichartz estimates fail for $P$.

\noindent (iii) There exists a Riemannian metric $g$ on $\R^d$ such that $g = dz^2$ outside a compact set and the sharp orthonormal Strichartz estimates fail for $P=-\Delta_{g}$.
\end{thm}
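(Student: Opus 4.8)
The plan is to derive all three statements from the failure of the \emph{classical} velocity average estimate, via the contrapositive of Theorem \ref{2412231555}. Fix any triple $(q,r,\beta)$ admissible in the sense of Definition \ref{253211014}; then $q<\infty$, $\beta=\frac{2r}{r+2}$, and $(q,r,\beta)$ also satisfies the hypotheses of Theorem \ref{2412231555}. If the orthonormal estimate (\ref{2412231605}) held for $P$ with this triple, then Theorem \ref{2412231555} would give
\begin{align*}
\left\| \int_{\R^d} f\circ e^{-tH_p}(z,\zeta)\,d\zeta \right\|_{L^{\frac{q}{2}}_t\,L^{\frac{r}{2}}_z} \lesssim \|f\|_{L^{\beta}_{z,\zeta}}.
\end{align*}
Hence it suffices to produce, for each admissible $(q,r,\beta)$, a nonnegative $f\in L^\beta(T^*\R^d)$ for which the left-hand side is infinite; by Definition \ref{253211014} this shows that the sharp orthonormal Strichartz estimates fail for $P$. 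For part (iii), where $P=-\Delta_g$, the same reduction applies: the proof of Theorem \ref{2412231555} uses only the principal symbol $g^{ij}(z)\zeta_i\zeta_j$, completeness of $H_p$ and essential self-adjointness (see also Corollary \ref{251282306}).

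For the counterexample, let $\gamma$ be the given periodic trajectory---a compact curve invariant under the flow $\Phi_t:=e^{tH_p}$. For small $\delta>0$ let $N_\delta(\gamma)\subset T^*\R^d$ be its $\delta$-neighborhood and set $f:=\mathbf 1_{N_\delta(\gamma)}$, so that $\|f\|_{L^\beta}=\VOL(N_\delta(\gamma))^{1/\beta}<\infty$. Put
\begin{align*}
g_t(z):=\int_{\R^d}f\bigl(\Phi_{-t}(z,\zeta)\bigr)\,d\zeta=\bigl|\{\zeta:(z,\zeta)\in\Phi_t(N_\delta(\gamma))\}\bigr|.
\end{align*}
Since $\Phi_t$ preserves Liouville measure, $\|g_t\|_{L^1_z}=\VOL\bigl(\Phi_t(N_\delta(\gamma))\bigr)=\VOL(N_\delta(\gamma))$ for every $t\in\R$. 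The key geometric input is that, under the relevant stability hypothesis on $\gamma$, the whole orbit $W_\delta:=\bigcup_{t\in\R}\Phi_t(N_\delta(\gamma))$ is a bounded subset of $T^*\R^d$ once $\delta$ is small; consequently $\supp g_t$ is contained, for every $t$, in the fixed finite-measure set $U_\delta:=\pi_z(W_\delta)$. By H\"older's inequality $\VOL(N_\delta(\gamma))=\|g_t\|_{L^1_z}\le\|g_t\|_{L^{r/2}_z}\,|U_\delta|^{1-2/r}$, so $\|g_t\|_{L^{r/2}_z}\ge c_\delta>0$ for all $t\in\R$; since $q<\infty$ this forces $\|g\|_{L^{q/2}_t L^{r/2}_z}=\infty$, contradicting the displayed estimate. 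This proves (ii), the stability of $\gamma$ serving precisely to guarantee boundedness of $W_\delta$, and reduces (i) and (iii) to establishing that boundedness.

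For (i), with $d=1$, every nonconstant periodic trajectory is automatically stable in this sense. Take a short arc $\Sigma$ through $\gamma(0)$ transverse to $\gamma$; since $d=1$ and $\gamma$ is nonconstant we have $dp\neq0$ on $\gamma$ and $\ker dp$ spanned by the Hamilton field, so $p$ is a local coordinate on $\Sigma$, and because $\Phi_t$ preserves $p$ the first-return map $\Sigma\to\Sigma$ is the identity in this coordinate---hence the identity near $\gamma(0)$. Thus every point near $\gamma(0)$ lies on a periodic orbit of period close to that of $\gamma$, and by uniform continuity of $\Phi$ over one period these orbits lie in arbitrarily small neighborhoods of $\gamma$; so $W_\delta$ is bounded. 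For (iii) it suffices, by part (ii) applied to $P=-\Delta_g$, to exhibit a complete metric $g$ on $\R^d$, $d\ge2$, equal to $dz^2$ outside a compact set, carrying a stable closed geodesic; then $-\Delta_g$ is essentially self-adjoint on $C^\infty_0(\R^d)$ and its geodesic flow is complete. Take $g=dt^2+\psi(t)^2 g_{\mathbb{S}^{d-1}}$ in polar-type coordinates near a point, with $\psi$ smooth, odd near $0$ with $\psi'(0)=1$, equal to $t$ for $t$ large, everywhere positive, and having a strict interior local maximum at some $t_0$ (a ``bulge''). The sphere $\{t=t_0\}$ is totally geodesic ($\psi'(t_0)=0$) and its great circles are closed geodesics whose transverse Jacobi fields all oscillate (the relevant sectional curvatures being $-\psi''(t_0)/\psi(t_0)>0$ and $\psi(t_0)^{-2}>0$), so these geodesics are stable; equivalently, Clairaut's relation confines geodesics starting near such a geodesic to $\{t:\psi(t)\ge\psi(t_0)-\varepsilon\}$, a bounded neighborhood of $\{t=t_0\}$, so $W_\delta$ is bounded. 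This gives (iii).

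The only nonroutine ingredient is the geometric input of the construction: passing from stability of $\gamma$ to boundedness of $W_\delta=\bigcup_{t}\Phi_t(N_\delta(\gamma))$, which is what lets H\"older's inequality be applied uniformly in time. For $d=1$ this is the return-map computation above; in part (ii) it is exactly what the stability hypothesis is designed to encode (if only linearized or formal stability is available, one replaces the whole time axis by a super-polynomially long interval, on which $W_\delta$ stays bounded, and lets $\delta\to0$); in part (iii) it follows directly from Clairaut's relation and so does not even require the abstract notion of stability. A secondary point is fitting $P=-\Delta_g$ into the pseudodifferential framework of Theorem \ref{2412231555} in part (iii), which is handled just as for the positive results.
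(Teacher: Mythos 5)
Your proposal follows the same overall strategy as the paper: argue by the contrapositive of Theorem~\ref{2412231555} and produce a trapped set that breaks the classical velocity--average estimate. The implementations diverge in places, and there are two places where the details do not quite hold as written.

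The paper packages the classical counterexample in a slightly cleaner way (Proposition~\ref{25171303}): from a \emph{forward or backward flow-invariant} bounded set $K$ of positive measure, take $f\in C_0^\infty$ with $f\equiv1$ on $K$ and observe $\|f\circ e^{-tH_p}\|_{L^1(K)}\ge\mu(K)$ for all $t\ge0$ (or $\le0$), bounding this by $\|f\circ e^{-tH_p}\|_{L^r_xL^p_\xi}$ via compactness of $\pi(K)$. Your indicator-function plus H\"older version is essentially the same mechanism and works, but it uses the boundedness of the \emph{whole} orbit $W_\delta=\bigcup_{t\in\R}\Phi_t(N_\delta(\gamma))$. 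Definition~\ref{253301237} is a \emph{forward} (Lyapunov) stability hypothesis; you can only conclude $\bigcup_{t\ge0}\Phi_t(N_\delta(\gamma))$ is bounded, not $\bigcup_{t\in\R}$. Since your H\"older step only needs $\|g_t\|_{L^{r/2}}\ge c>0$ on a half-line to make $\|g\|_{L^{q/2}_t}=\infty$ (as $q<\infty$), the argument survives after restricting to $t\ge0$, but as stated the claim about $W_\delta$ is unjustified. The paper's route (take $K=\{e^{tH_p}(V):t\ge0\}$, which is automatically forward-invariant) avoids this issue entirely.

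For (i), the paper simply invokes the Jordan curve theorem: $\gamma$ bounds a region $\Omega$ in $T^*\R\cong\R^2$, and $K=\Omega\cup\gamma$ is automatically a bounded flow-invariant set of positive measure. Your return-map argument (the first-return map on a transversal is the identity because $p$ is preserved) is a genuinely different way to see the boundedness, and it is correct, but heavier than needed; the topological argument is cleaner and does not require analyzing the return map or appealing to uniform continuity over a period.

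For (iii), the paper builds the metric explicitly by gluing a stereographically projected spherical cap into the Euclidean metric. Your warped-product/bulge construction $g=dt^2+\psi(t)^2 g_{\mathbb{S}^{d-1}}$ is a reasonable alternative, but there is a genuine error in the confinement step: the set $\{t:\psi(t)\ge\psi(t_0)-\varepsilon\}$ is \emph{not} a bounded neighborhood of $\{t=t_0\}$ --- since $\psi(t)=t$ for large $t$, this set contains all sufficiently large $t$. What Clairaut's relation actually gives is confinement to the \emph{connected component} of $\{\psi\ge L\}$ containing $t_0$, which is bounded because $t_0$ is a strict local maximum of $\psi$. This is fixable (restrict to the component, or phrase the trapping via the effective potential $V_{\mathrm{eff}}(t)=L^2/(2\psi(t)^2)$ having a local minimum well of finite width), but as written the statement is false. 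Also note that the Jacobi-field/positive-curvature observation gives only linear (elliptic) stability, not Lyapunov stability in the sense of Definition~\ref{253301237}; it is the Clairaut integral, not the sectional curvature sign, that supplies the nonlinear trapping.
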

For the definition of the stability of a periodic trajectory, see Section \ref{2412182325}. An interesting feature in the proof of Theorem \ref{25371352} is that we do not need any quasimode. Our strategy is to use Theorem \ref{2412231555}. The point is that by the stability condition we can take an initial state $f \in C^{\infty} _0 (T^* \R^d)$ in (\ref{2412231609}) concentrating on the trapped set. Concerning (iii) we construct desired metrics $g$ using periodic geodesics on the sphere and our construction is explicit (see Proposition \ref{251151538}). The proof of Theorem \ref{2412231555}, \ref{253201908} and Corollary \ref{253202133} are given in Section \ref{2412182324}. Theorem \ref{25371352} is proved in Section \ref{2412182325}.
\subsection{Applications to nonlinear equations}\label{25371533}
It is known that the orthonormal Strichartz estimates are useful to prove the well-posedness or scattering for infinite fermionic systems. See references in \cite{H3} for such results and \cite{AKN1, AKN2, LeSa, LaSa, Sm} for the semiclassical limit of such equations. Theorems \ref{251241323} and \ref{252101626} are also applicable but we omit details here since arguments are identical to \cite{H1, H2, H3}. It is also notable that Theorem \ref{251241323} with the Littlewood-Paley theorem (\cite[Proposition 2.2]{Zh1}) yields the refined Strichartz estimates:
\begin{align}
\|e^{it\Delta_{g}} u_0\|_{L^q _t L^r _z} \lesssim \|u_0\|_{\dot{B}^{0} _{2, 2\beta}}, \label{25381740}
\end{align}
where $(q, r, \beta)$ is as in Theorem \ref{251241323} and $\|f\|_{\dot{B}^{0} _{2, 2\beta}} :=\| \{\|\phi_{j} (\sqrt{-\Delta_{g}}) f\|_2 \} \|_{\ell^{2\beta}}$ with a homogeneous Littlewood-Paley decomposition $\{\phi_{j}\}$. The proof of (\ref{25381740}) is identical to \cite{FS, H1, H2}. Using (\ref{25381740}) we can refine a small-data scattering for the $L^2$-critical NLS considered in \cite{BM}, that is, we can show that for any $M>0$ there exists $\epsilon >0$ such that the $L^2$-critical NLS has a unique global scattering solution provided initial data $u_0$ satisfies $\|u_0\|_2 < M$ and $\|u_0\|_{\dot{B}^{0} _{2, \infty}} < \epsilon$. We refer to \cite{H3} for the proof, where the Aharonov-Bohm Hamiltonian is considered.

In this subsection we consider the Boltzmann equation:
 \[
\left\{
\begin{array}{l}
\partial _{t} f(t, z, \zeta) +H_p f(t, z, \zeta) = Q(f, f) (t, z, \zeta) \\
f(0, z, \zeta) = f_0 (z, \zeta) \\
\end{array}
\right.
\tag{B}\label{241224048}
\]    
on $T^* \R^d$. Here we assume $p \in S^{2, 0}$ is real-valued, homogeneous of degree $2$, $H_p$ is complete on $T^* \R^d$ and the Strichartz estimates (\ref{24122323040}) and (\ref{24122323550}) are satisfied. Typical examples are $p(z, \zeta) = \sum_{i, j = 1}^{d} g^{ij} (z) \zeta _{i} \zeta _{j}$ for a nontrapping scattering metric $g = (g_{ij})$ (see Corollary \ref{253202133}). The nonlinearity (collision term) in the right hand side of (\ref{241224048}) is given by
\begin{align*}
Q(f, f) (t, z, \zeta) = \int_{\R^d} \int_{\mathbb{S}^{d-1}} (f' f' _{*} -f f_{*}) B(\zeta - \zeta _{*}, \omega) d\omega d \zeta _{*},
\end{align*}
where $f' = f (t, z, {\zeta}')$, $f' _{*} = f(t, z, {\zeta}' _{*})$, $f_{*} = f(t, z, \zeta _{*})$ and the relations of pre-collisional and post-collisional momentum are given by $\zeta ' = \zeta - [\omega \cdot (\zeta - \zeta _{*})] \omega$, $\zeta ' _{*} = \zeta _{*} + [\omega \cdot (\zeta - \zeta _{*})] \omega$. Concerning the collisional kernel $B$, we assume $B$ is a cut-off soft potential. Precisely we assume that $B(\zeta - \zeta _{*}, \omega) = |\zeta - \zeta _{*}|^{\gamma} b (\cos \theta)$ for some $\gamma \in (-d, 0]$ and $ \cos \theta = \frac{(\zeta - \zeta _{*}) \cdot \omega}{|\zeta - \zeta _{*}|}$. Here $b$ is a nonnegative measurable function supported in $\{ \cos \theta \ge 0\}$ and satisfies Grad's cut-off assumption $0 \le \int_{\mathbb{S}^{d-1}} b(\cos \theta) d \omega < \infty$.
To introduce supplementary function spaces we set $\Lambda = \left\{ (q, r, p) \in [1, \infty]^3 \mid \frac{1}{q} = \frac{d}{p} -1, \frac{1}{r} = \frac{2}{d} - \frac{1}{p}, \frac{1}{d} < \frac{1}{p} < \frac{d+1}{d^2} \right\}$. In the main theorem we consider a small-data scattering for the $\gamma = -1$ model (high temperature situation). 
\begin{thm}\label{25121757}
Assume $d =3$, $\gamma = -1$, (\ref{24122323040}) and (\ref{24122323550}) for any non-endpoint KT-admissible quadruplets $(q, r, p, a)$ and $(\tilde{q}, \tilde{r}, \tilde{p}, a')$. If $f_0 \in L^3 \cap L^{\frac{15}{8}} _{z, \zeta}$ satisfies $f_0 \ge 0$ and $\|f_0\|_{L^3 \cap L^{\frac{15}{8}} _{z, \zeta}}$ is sufficiently small, then (\ref{241224048}) has a unique nonnegative solution $f \in C([0, \infty); L^3 _{z, \zeta}) \cap L^{\boldsymbol{q}} ([0, \infty); L^{\boldsymbol{r}} _z L^{\boldsymbol{p}} _{\zeta}) \cap L^2 ([0, \infty); L^{\frac{30}{11}} _{z} L^{\frac{10}{7}} _{\zeta})$ for any $(\boldsymbol{q}, \boldsymbol{r}, \boldsymbol{p}) \in \Lambda$. Moreover there exists $f_{\infty} \in L^3 _{z, \zeta}$ such that
\begin{align}
\|f(t) - f_{\infty} \circ e^{-tH_p} \|_{L^3 _{z, \zeta}} \to 0 \quad as \quad t \to \infty. \label{25121807}
\end{align}
\end{thm}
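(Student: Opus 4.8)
The plan is to solve $(\ref{241224048})$ by a contraction-mapping argument on the Duhamel (mild) formulation
\[
f(t) = f_0 \circ e^{-tH_p} + \int_0^t Q(f,f)(s) \circ e^{-(t-s)H_p}\, ds =: \Phi(f)(t),
\]
and then to read off the scattering statement $(\ref{25121807})$ from the convergence in $L^3_{z,\zeta}$ of the Duhamel integral as $t\to\infty$. First I would fix the function space. Since $e^{-tH_p}$ is the pushforward by the Liouville-measure-preserving Hamilton flow, $\|g\circ e^{-tH_p}\|_{L^3_{z,\zeta}} = \|g\|_{L^3_{z,\zeta}}$ for all $t$; moreover a direct check against Definition $\ref{2412232249}$ shows that every triple $(\boldsymbol q,\boldsymbol r,\boldsymbol p)\in\Lambda$ and the triple $(\infty,3,3)$ complete to non-endpoint KT-admissible quadruplets with $a=3$, while $(2,30/11,10/7)$ completes to one with $a=15/8$. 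I therefore take $E$ to be the space of $f\in C([0,\infty);L^3_{z,\zeta})$ with
\[
\|f\|_E := \|f\|_{L^\infty_t L^3_{z,\zeta}} + \sup_{(\boldsymbol q,\boldsymbol r,\boldsymbol p)\in\Lambda}\|f\|_{L^{\boldsymbol q}_t L^{\boldsymbol r}_z L^{\boldsymbol p}_\zeta} + \|f\|_{L^2_t L^{30/11}_z L^{10/7}_\zeta} < \infty,
\]
and I work in the closed ball $B_\delta\subset E$ with $\delta>0$ small. The homogeneous estimate $(\ref{24122323040})$ used with $a=3$ and $a=15/8$, together with the trivial $L^\infty_t L^3$ bound, gives $\|f_0\circ e^{-tH_p}\|_E\lesssim\|f_0\|_{L^3\cap L^{\frac{15}{8}}_{z,\zeta}}$.

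The core of the argument is a bilinear estimate for the collision operator: for a suitably chosen non-endpoint KT-admissible quadruplet $(\tilde q,\tilde r,\tilde p,a')$,
\[
\bigl\|Q(f,g)\bigr\|_{L^{\tilde q'}_t L^{\tilde r'}_z L^{\tilde p'}_\zeta}\lesssim\|f\|_E\,\|g\|_E.
\]
Granting this, applying the inhomogeneous estimate $(\ref{24122323550})$ once for each admissible triple defining the $E$-norm, always paired with $(\tilde q,\tilde r,\tilde p)$, bounds the $E$-norm of the Duhamel term by $\lesssim\|f\|_E^2$ and similarly yields a Lipschitz estimate $\|\Phi(f)-\Phi(h)\|_E\lesssim(\|f\|_E+\|h\|_E)\|f-h\|_E$, so $\Phi$ contracts $B_\delta$ once $\|f_0\|_{L^3\cap L^{\frac{15}{8}}_{z,\zeta}}$ and $\delta$ are small. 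To prove the bilinear estimate I split $Q=Q^+-Q^-$. Grad's cut-off collapses the $\omega$-integral in the loss term, giving $Q^-(f,g)(t,z,\zeta)=c_0\,f(t,z,\zeta)\bigl(|\cdot|^{\gamma}*_\zeta g(t,z,\cdot)\bigr)(\zeta)$ with $c_0=\int_{\mathbb{S}^{d-1}}b\,d\omega$; since $d=3$ and $\gamma=-1$, $|\zeta|^{-1}$ is the Riesz kernel of order $2$, so Hardy--Littlewood--Sobolev (with Young's inequality for the near-origin part of the kernel when the input integrability is large) gains $\zeta$-integrability, after which successive H\"older inequalities in $\zeta$, then $z$, then $t$ distribute the three resulting factors among the norms defining $\|\cdot\|_E$. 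The gain term $Q^+$ is handled the same way after the measure-preserving change of variables $(\zeta,\zeta_*)\mapsto(\zeta',\zeta'_*)$, combined with a Young-type bilinear inequality for the cut-off gain kernel, $\|Q^+(f,g)\|_{L^m_\zeta}\lesssim\|f\|_{L^{m_1}_\zeta}\|g\|_{L^{m_2}_\zeta}$, which again uses the factor $|\zeta-\zeta_*|^{-1}$. The exponents in all these H\"older/HLS/Young splittings are precisely the ones forced by the defining relations of $\Lambda$ and of KT-admissibility; this matching is exactly where $d=3$ and $\gamma=-1$ enter.

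It remains to handle nonnegativity, uniqueness and scattering. For nonnegativity I would instead solve the equivalent reformulation in which the loss term is carried as an exponential multiplier, schematically $f(t)=\bigl[e^{-\mathcal{L}[g]}f_0\bigr]\circ e^{-tH_p}+\int_0^t\bigl[e^{-(\mathcal{L}[g](t)-\mathcal{L}[g](s))}Q^+(f,g)(s)\bigr]\circ e^{-(t-s)H_p}\,ds$, where $\mathcal{L}[g]\ge0$ is built along the flow from $L(g)=c_0|\cdot|^{-1}*_\zeta g\ge0$; this map sends nonnegative functions to nonnegative functions, the same bilinear estimates show it contracts $B_\delta$, and its fixed point coincides with the mild solution. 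Uniqueness within the stated class follows from the Lipschitz estimate run on short time intervals together with a continuity/bootstrap argument. Finally, the inhomogeneous Strichartz estimate shows that $s\mapsto Q(f,f)(s)\circ e^{sH_p}$ is Bochner-integrable into $L^3_{z,\zeta}$ with $\bigl\|\int_t^\infty Q(f,f)(s)\circ e^{sH_p}\,ds\bigr\|_{L^3_{z,\zeta}}\to0$ as $t\to\infty$; hence $f_\infty:=f_0+\int_0^\infty Q(f,f)(s)\circ e^{sH_p}\,ds\in L^3_{z,\zeta}$ is well defined and $\|f(t)-f_\infty\circ e^{-tH_p}\|_{L^3_{z,\zeta}}=\bigl\|\int_t^\infty Q(f,f)(s)\circ e^{-(t-s)H_p}\,ds\bigr\|_{L^3_{z,\zeta}}\to0$, which is $(\ref{25121807})$.

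The step I expect to be the main obstacle is the bilinear collision estimate: one must at once locate a non-endpoint KT-admissible quadruplet whose dual space absorbs $Q(f,g)$, verify that the Riesz-potential gain in $\zeta$ meshes with the H\"older exponents dictated by $\Lambda$ so that every factor lands in a norm controlled by $(\ref{24122323040})$--$(\ref{24122323550})$, and control the gain term $Q^+$ --- whose post-collisional geometry blocks a naive convolution bound --- through a Young-type inequality for the cut-off kernel. Everything else reduces to a routine Banach fixed-point argument and a Duhamel-tail computation.
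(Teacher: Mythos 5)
Your route is genuinely different from the paper's: you propose a direct contraction for the full Duhamel map $\Phi$ and then a second, exponential-multiplier reformulation $\Psi$ to recover nonnegativity, whereas the paper (following Kaniel--Shinbrot) first solves the gain-only problem (Propositions \ref{25112308}, \ref{25121810}), proves the loss-functional bound (Lemma \ref{25141121}), and then constructs nested monotone sequences $0\le h_n\le h_{n+1}\le g_{n+1}\le g_n\le f_+$ whose common limit is the solution; uniqueness then comes from a sign-respecting differential inequality for $w=g-h$ in which the exponential factor is bounded simply by $1$. The existence/uniqueness part of your plan works and essentially reproduces the paper's bilinear machinery -- you correctly note that the extra $L^2_tL^{30/11}_zL^{10/7}_\zeta$ norm is needed because the Riesz kernel gain on $L(g)=c_0|\cdot|^{-1}*_\zeta g$ falls out of HLS range when tested against $\boldsymbol p\in(9/4,3)$ from $\Lambda$, which is precisely why the paper introduces the auxiliary $a_1=15/8$ quadruplet.

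The gap is in the nonnegativity step. You assert that ``the same bilinear estimates show [$\Psi$] contracts $B_\delta$,'' but the difference $\Psi(f_1)-\Psi(f_2)$ produces terms of the shape
\begin{align*}
\bigl[e^{-\int_0^t U(t-\tau)L(f_1)(\tau)d\tau}-e^{-\int_0^t U(t-\tau)L(f_2)(\tau)d\tau}\bigr]\,U(t)f_0,
\end{align*}
and after the elementary bound $|e^{-a}-e^{-b}|\le|a-b|$ you are left to control a \emph{product} of the time-integrated quantity $\int_0^t U(t-\tau)L(f_1-f_2)(\tau)d\tau$ with a free evolution $U(t)f_0$ (and an analogous product against the $Q^+$ integrand). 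This is a new trilinear estimate, not covered by the collision bounds of HJ1/HJ2 or Lemma \ref{25141121}, and matching all exponents so that both factors land in Strichartz-admissible norms is exactly the kind of arithmetic that can fail. The paper's monotone iteration deliberately sidesteps this: one never differentiates the exponential, only uses $0\le e^{-\cdots}\le1$, and passes to the limit by monotone/dominated convergence. Either fix your plan by replacing ``contraction'' with a genuine monotone iteration (at which point you have essentially rediscovered Kaniel--Shinbrot, and you still need the upper barrier $f_+$ from the gain-only problem), or supply the missing trilinear estimate. A smaller point: for the scattering tail you propose direct Bochner integrability of $s\mapsto Q(f,f)(s)\circ e^{sH_p}$ into $L^3_{z,\zeta}$, but the dual-Strichartz space naturally adapted to the loss term corresponds to $a_1=15/8$, not $a=3$; the paper therefore upgrades the loss contribution to $L^3$ by monotonicity and domination by the $Q^+$ integral rather than by a direct Strichartz bound, and you should check your exponent arithmetic before claiming the direct route.
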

If $p(z, \zeta) = |\zeta|^2$, (\ref{241224048}) is the ordinary Boltzmann equation. A small-data scattering is proved in \cite{CDP, HJ2} with $d=2, 3$. Our result is an extension to the setting of nontrapping scattering metrics. To the best knowledge of the author, there seems to be no other result for the scattering on noncompact manifolds except for $\R^d$. See \cite{CHR}, \cite{DILS} or \cite{San} for the nonlinear Vlasov equation on compact Anosov manifolds, for the Vlasov-Poisson system on $\mathbb{H}^2$ and $\mathbb{S}^2$ or for some decay estimates for the Boltzmann equation on compact manifolds. We remark that \cite{CheHo} derives the (quantum) Boltzmann equation from a many-body system in the mean-field limit. Our proof of Theorem \ref{25121757} may have similarity since it relies on the Strichartz estimates, which follow from the orthonormal Strichartz estimates, that is, estimates for infinitely many-body systems. See Section \ref{2412182326} for the proof and comments on the condition $f_0 \ge 0$.
\subsection{Notations}\label{253191110}
For $(q, r) \in [2, \infty]^2$ and $\sigma >0$, we say that $(q, r)$ is a $\sigma$-admissible pair if $\frac{2}{q} =2\sigma (\frac{1}{2} - \frac{1}{r})$ and $(\sigma, q, r) \ne (1, 2, \infty)$ hold. For $z \in \C$ and $a \in \C \setminus i(-\infty, 0]$ we define $a^z = \exp (z \log a)$, where $\log$ is a branch defined on $\C \setminus i(-\infty, 0]$ and satisfies $\arg \log r =0$ and $\arg \log (-r) = \pi$ for $r >0$. For $p, q \in [1, \infty]$, $\| \cdot \|_{p \to q}$ denotes the operator norm from $L^p$ to $L^q$. For a tempered distribution $u$, $\mathcal{F} [u]$ stands for its Fourier transform.

\section{Positive results for orthonormal Strichartz estimates on manifolds with ends}\label{241127048}
In this section we prove the orthonormal Strichartz estimates on nontrapping scattering manifolds ({\S}\ref{25271431}) and on nontrapping asymptotically hyperbolic manifolds ({\S}\ref{25271436}). Throughout this section $z$ denotes local coordinates away from $\partial M$. The Laplace-Beltrami operator $\Delta_{g}$ is locally expressed as $\frac{1}{\sqrt{g}} \partial_{i} g^{ij} \sqrt{g} \partial_{j}$, where $(g^{ij}) = (g_{ij})^{-1}$ and $g = g_{ij} dz^i dz^j$. Near the boundary we use local coordinates $(x, y)$ as in the definition. The proof given in {\S}\ref{25271431} is analogous to the abstract results in \cite{H3, FMSW} (see also \cite{Ng}) since microlocalized propagators satisfy similar bounds globally in time. Contrary to this we need different arguments in {\S}\ref{25271436}. This is because microlocalized propagators at low energy do not contain sufficient decay in time due to the totally different geometry of asymptotically hyperbolic spaces. In order to compensate for this, we need to use exponential decay in space as in Lemma \ref{25251030}.  
\subsection{Nontrapping scattering manifolds}\label{25271431}
For the proof of Theorem \ref{251241323} we use a decomposition of the propagator as in \cite{HZ}. We assume, for a while, $V =0$ hence $P= -\Delta_{g}$. It is shown in \cite{HZ} that there exists an energy-dependent operator partition of unity $\{Q_{j} (\lambda)\}_{j=1}^{N}$ on $L^2 (M^{\circ})$ such that
\begin{align*}
&\Iden = \sum_{j=1}^{N} Q_{j} (\lambda) \quad for \; all \; \lambda \in [0, \infty), \\
& U_{j} (t) = \int_{0}^{\infty} e^{it\lambda ^{2}} Q_{j} (\lambda) dE_{\sqrt{P}} (\lambda)
\end{align*} 
is a well-defined operator on $L^2 (M^{\circ})$. Here $N$ is independent of $\lambda \in [0, \infty)$ and $\|U_{j} (t) \|_{2 \to 2}$ are uniformly bounded in $t \in \R$. They satisfy $e^{itP} = \sum_{j=1}^{N} U_{j} (t)$. The important point is that the propagators $U_{j} (t)U_{j} (s)^*$ are microlocalized by $\{Q_{j} (\lambda)\}_{j=1}^{N}$ near the diagonal so that the following dispersive estimates hold (i.e. conjugate points have no effect for these propagators):
\begin{align}
\left| \int_{0}^{\infty} e^{it\lambda ^{2}} (Q_{j} (\lambda) dE_{\sqrt{P}} (\lambda) Q_j (\lambda)^*) (z, z')d\lambda \right| \lesssim |t|^{-\frac{d}{2}} \label{251241554}.
\end{align}
In the next proposition we show the orthonormal Strichartz estimates for these microlocalized propagators using (\ref{251241554}).
\begin{proposition}\label{251241557}
Let $P = -\Delta_{g}$ be the Laplace-Beltrami operator on $(M^{\circ}, g)$ satisfying Assumption \ref{25121723} and nontrapping condition. Then, for any $\frac{d}{2}$-admissible pair $(q, r)$ satisfying $1+d < \tilde{r} = 2(\frac{r}{2})' < 2+d$, $\beta = \frac{2r}{r+2}$ and $\bullet \in \{1, \dots, N\}$, we have
\begin{align}
\left\| \sum_{j=0}^{\infty} \nu_j |U_{\bullet} (t) f_j|^2 \right\|_{L^{\frac{q}{2}, \beta} _t  L^{\frac{r}{2}} _z} \lesssim \|\nu\|_{\ell^{\beta}}. \label{251251602}
\end{align}
\end{proposition}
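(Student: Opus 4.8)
The plan is to deduce \eqref{251251602} from two bounds that the microlocalized propagators $U_{\bullet}(t)$ satisfy \emph{globally} in time, running the abstract orthonormal-Strichartz scheme of \cite{H3,FMSW}. The first is the uniform energy bound $\|U_{\bullet}(t)\|_{2\to2}\lesssim1$ recorded above. The second is the dispersive bound $\|U_{\bullet}(t)U_{\bullet}(s)^{*}\|_{1\to\infty}\lesssim|t-s|^{-d/2}$, which is immediate from \eqref{251241554}: because $dE_{\sqrt P}(\lambda)\,dE_{\sqrt P}(\mu)=\delta(\lambda-\mu)\,dE_{\sqrt P}(\lambda)$, the Schwartz kernel of $U_{\bullet}(t)U_{\bullet}(s)^{*}$ is exactly $\int_{0}^{\infty}e^{i(t-s)\lambda^{2}}\bigl(Q_{\bullet}(\lambda)\,dE_{\sqrt P}(\lambda)\,Q_{\bullet}(\lambda)^{*}\bigr)(z,z')\,d\lambda$, whose modulus is $\lesssim|t-s|^{-d/2}$ by \eqref{251241554}; interpolating with the energy bound yields $\|U_{\bullet}(t)U_{\bullet}(s)^{*}\|_{\bar{p}'\to\bar{p}}\lesssim|t-s|^{-\frac{d}{2}(1-\frac{2}{\bar{p}})}$ for every $\bar{p}\in[2,\infty]$. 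These are the only properties of $U_{\bullet}(t)$ that enter; in particular no control of the propagator away from the diagonal is used, so conjugate points are irrelevant.

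Next I would invoke the Frank--Sabin duality principle \cite{FS}: \eqref{251251602} for all orthonormal $\{f_{j}\}\subset L^{2}(M^{\circ})$ and all $\nu\in\ell^{\beta}$ is equivalent to the weighted Schatten-class estimate
\begin{align*}
\|\mathcal{K}_{W}\|_{\mathfrak{S}^{\tilde{r}}\bigl(L^{2}(\mathbb{R}_{t}\times M^{\circ},\,dt\,dg)\bigr)}\lesssim\|W\|_{L^{\tilde{q}}_{t}L^{\tilde{r}}_{z}}^{2}
\end{align*}
for all weights $W=W(t,z)$, where $\mathcal{K}_{W}$ is the integral operator with kernel $W(t,z)\,K_{U_{\bullet}(t)U_{\bullet}(s)^{*}}(z,z')\,\overline{W(s,z')}$, $\tilde{r}=2(\tfrac{r}{2})'$ is the Schatten index conjugate to $\beta=\tfrac{2r}{r+2}$, and $\tilde{q}=2(\tfrac{q}{2})'$. (The refinement of $L^{\tilde{q}}_{t}$ to the Lorentz space dual to $L^{q/2,\beta}_{t}$ is produced automatically by the block summation in the next step.) Everything thus reduces to this Schatten bound.

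To prove it I would run the Keel--Tao $TT^{*}$ argument inside the Schatten scale, as in \cite{FMSW,H3}. Decompose dyadically in time, $\mathcal{K}_{W}=\sum_{k\in\mathbb{Z}}A_{k}$ with $A_{k}$ supported in $|t-s|\sim2^{k}$. On each block one has, from $\|U_{\bullet}(t)U_{\bullet}(s)^{*}\|_{2\to2}\lesssim1$, an operator-norm estimate and, from the dispersive family above, a Hilbert--Schmidt-type estimate; complex interpolation in the Schatten classes, optimized over the auxiliary exponent $\bar{p}$ and combined with H\"older in $(t,z)$, gives a bound for $\|A_{k}\|_{\mathfrak{S}^{\tilde{r}}}$ in terms of $\|W\|_{L^{\tilde{q}}_{t}L^{\tilde{r}}_{z}}^{2}$ with an admissible dyadic weight in $k$. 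Here the $\tfrac{d}{2}$-admissibility of $(q,r)$ is exactly what makes the Lebesgue exponents of $W$ come out to $(\tilde{q},\tilde{r})$, and the constraint $1+d<\tilde{r}<2+d$ is exactly what places $\mathfrak{S}^{\tilde{r}}$ strictly between the operator-norm ($\mathfrak{S}^{\infty}$) and Hilbert--Schmidt ($\mathfrak{S}^{2}$) block endpoints: the lower bound $\tilde{r}>1+d$ is the condition $r<\tfrac{2(d+1)}{d-1}$ of staying below the Stein--Tomas exponent, beyond which one must switch to a larger $\beta$ in a different regime, while the upper bound $\tilde{r}<2+d$ keeps the dyadic sum over $k\in\mathbb{Z}$ convergent. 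Summing the blocks by real interpolation then delivers \eqref{251251602}, with the Lorentz norm $L^{q/2,\beta}_{t}$ emerging from the $\ell^{\beta}$-type summation; the hypothesis $d\ge3$ is used only through $\tfrac{d}{2}>1$, required for the endpoint admissible pair $(q,r)=(2,\tfrac{2d}{d-2})$.

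I expect the main obstacle to be the bookkeeping of this last step rather than any single estimate: one must track the Schatten index, the two mixed-Lebesgue exponents of $W$, and the dyadic exponent in $k$ simultaneously through the block interpolation and check that they close precisely under $\tfrac{d}{2}$-admissibility and $\tilde{r}\in(1+d,2+d)$. Since $U_{\bullet}(t)$ enters only through the two global-in-time bounds and all remaining steps are purely measure-theoretic, the argument of \cite{H3,FMSW}, written on $\mathbb{R}^{d}$, transfers without change to $(M^{\circ},dg)$; checking this transfer is the only genuinely new point.
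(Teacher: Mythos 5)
Your proposal is correct in outline and uses the same two inputs as the paper — the on-diagonal dispersive bound \eqref{251241554} for $U_{\bullet}(t)U_{\bullet}(s)^{*}$, the uniform $L^2$ bound, and the Frank--Sabin duality principle — but it runs a genuinely different interpolation scheme. You decompose the bilinear form dyadically in $|t-s|$, interpolate $\mathfrak{S}^{\infty}$ against $\mathfrak{S}^{2}$ block by block, and resum by real interpolation; the paper instead follows Frank--Sabin's analytic-family argument directly. It introduces the regularized operators $T_{\omega,\epsilon}$ with kernel $\chi_{\{\epsilon<|t-s|<1/\epsilon\}}(t-s)^{\omega}\,U_{\bullet}(t)U_{\bullet}(s)^{*}$, proves a weighted Hilbert--Schmidt bound on the strip $\re\omega\in(\tfrac{d-1}{2},\tfrac{d}{2})$ using \eqref{251241554} and Hardy--Littlewood--Sobolev, an operator-norm bound on $\re\omega=-1$ by Fourier analysis in $t$ (Appendix A), and Stein-interpolates to $\omega=0$, landing in $\mathfrak{S}^{\tilde r}$ precisely for $\tilde r\in(d+1,d+2)$. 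It then sends $\epsilon\to0$ through a limiting argument (operator-norm convergence implies singular-value convergence, then Fatou) to recover the unregularized Schatten bound \eqref{251251556}, and invokes duality. Both routes produce the Lorentz refinement $L^{q/2,\beta}_t$ — yours from the dyadic resummation, the paper's from HLS inside the Hilbert--Schmidt estimate — and both confine the use of the propagator to the diagonal, which is exactly why conjugate points are harmless, as you note. What your block scheme buys is avoiding the singular kernel $(t-s)^{\omega}$ and the $\epsilon$-regularization entirely; what the paper's analytic family buys is avoiding the block-by-block Schatten bookkeeping and the real-interpolation resummation, at the cost of the limiting argument. The one thing your sketch underplays is that even in the dyadic approach one must justify that the full (unregularized) bilinear form is Schatten-bounded — the blocks with $k\to-\infty$ concentrate at $t=s$ — so some version of the paper's Fatou-plus-singular-value step is still needed; this is the technical content behind the $\epsilon$-device that you shouldn't skip.
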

\begin{proof}
We define, for $\omega \in \{ \omega \in \C \mid \re \omega \in [-1, \frac{d}{2}]\}$, $\epsilon \in (0, 1)$ and a simple function $F$,
\begin{align*}
T_{\omega, \epsilon} F(t, z) = \int_{\R} \chi_{\{\epsilon < |t-s|< \frac{1}{\epsilon}\}} (t-s)^{\omega} U_{\bullet} (t)U_{\bullet} (s)^* F(s)ds
\end{align*}
and $TF(t, z) = \int_{\R} U_{\bullet} (t)U_{\bullet} (s)^* F(s)ds$. If $\re \omega \in (\frac{d-1}{2}, \frac{d}{2})$ and $W_1, W_2$ are simple functions, by (\ref{251241554}) (note that the left hand side of (\ref{251241554}) is the integral kernel of $U_{\bullet} (t)U_{\bullet} (s)^*$) we have
\begin{align}
\|W_1 T_{\omega, \epsilon} W_2\|^2 _{\mathfrak{S}^2} &\lesssim \int_{\epsilon < |t-s|< \frac{1}{\epsilon}} |t-s|^{2\re \omega -d} |W_1 (t, z)|^2 |W_2 (s, z')|^2 dsdtdzdz' \notag \\
& \lesssim \int |t-s|^{2\re \omega -d} \|W_1 (t)\|^2 _2 \|W_2 (s)\|^2 _2 dsdt \notag  \\
& \lesssim \|W_1\|^2 _{L^{2u, 4} _t L^{2} _z} \|W_2\|^2 _{L^{2u, 4} _t L^{2} _z} \label{251251148}
\end{align}
with an implicit constant at most exponential in $\im \omega$. Here the third line follows from the Hardy-Littlewood-Sobolev inequality and $\frac{2}{u} = 2+2\re \omega -d$. Next if $\re \omega =-1$,
\begin{align*}
\|T_{\omega, \epsilon} F(t)\|_2 &= \left\| U_{\bullet} (t) \int_{\R} \chi_{\{\epsilon < |t-s|< \frac{1}{\epsilon}\}} (t-s)^{\omega} U_{\bullet} (s)^* F(s)ds \right\|_2 \\
& \lesssim \left\| \int_{\R} \chi_{\{\epsilon < |t-s|< \frac{1}{\epsilon}\}} (t-s)^{\omega} U_{\bullet} (s)^* F(s)ds \right\|_2
\end{align*}
and the Fourier transform with respect to $t$-variable yields
\begin{align*}
\|T_{\omega, \epsilon} F\|_{L^2 _t L^2 _z} &\lesssim \|(t^{\omega} \chi_{\{\epsilon < |t|< \frac{1}{\epsilon}\}}) * (U_{\bullet} (t)^* F(t)) \|_{L^2 _t L^2 _z} \\
& = \|\mathcal{F} [t^{\omega} \chi_{\{\epsilon < |t|< \frac{1}{\epsilon}\}}] \mathcal{F} [U_{\bullet} (t)^* F(t)]\|_{L^2 _t L^2 _z} \\
& \lesssim \langle \im \omega \rangle e^{\pi |\im \omega|} \|U_{\bullet} (t)^* F(t)\|_{L^2 _t L^2 _z} \lesssim \langle \im \omega \rangle e^{\pi |\im \omega|} \|F\|_{L^2 _t L^2 _z}.
\end{align*}
Here the bound $\|\mathcal{F} [t^{\omega} \chi_{\{\epsilon < |t|< \frac{1}{\epsilon}\}}]\|_{\infty} \lesssim \langle \im \omega \rangle e^{\pi |\im \omega|}$ uniformly in $\epsilon \in (0, 1)$ is used. We give its proof in Appendix \ref{251252117}. Thus we obtain $\|W_1 T_{\omega, \epsilon} W_2\|_{\mathfrak{S}^{\infty}} \lesssim \langle \im \omega \rangle e^{\pi |\im \omega|} \|W_1\|_{L^{\infty} _t L^{\infty} _z} \|W_2\|_{L^{\infty} _t L^{\infty} _z}$. Now combining this with (\ref{251251148}) and using the complex interpolation, we have
\begin{align}
\|W_1 T_{0, \epsilon} W_2\|_{\mathfrak{S}^{\tilde{r}}} \lesssim \|W_1\|_{L^{\tilde{q}, 2\tilde{r}} _t L^{\tilde{r}} _z} \|W_2\|_{L^{\tilde{q}, 2\tilde{r}} _t L^{\tilde{r}} _z}. \label{251251150}
\end{align}
Next we show $\|W_1 T_{0, \epsilon} W_2 - W_1 T W_2\|_{\mathcal{L} (L^2 (M^{\circ}))} \to 0$ as $\epsilon \to 0$. We split the operator into
\begin{align*}
(W_1 T_{0, \epsilon} W_2 - W_1 T W_2)F(t, z) &= W_1 (t, z) \int_{\{|t-s| < \epsilon \}} U_{\bullet} (t)U_{\bullet} (s)^* W_2 (s)F(s) ds \\
& \quad + W_1 (t, z) \int_{\{|t-s| > \frac{1}{\epsilon} \}} U_{\bullet} (t)U_{\bullet} (s)^* W_2 (s)F(s) ds = I + I\hspace{-1.2pt}I.
\end{align*}
Since $W_1$ and $W_2$ are simple functions we may assume $\supp W_1, \supp W_2 \subset [-T, T] \times \Omega \Subset \R \times M^{\circ}$. For $I$, by H\"older's inequality, we have
\begin{align*}
\|I\|_{L^2 _t L^2 _z} = \|I\|_{L^2 ([-T, T] \times \Omega)} \lesssim \left\| \int_{\{|t-s| < \epsilon \}} \|F(s)\|_2 ds \right\|_{L^2 ([-T, T])} \lesssim \epsilon ^{\frac{1}{2}} \|F\|_{L^2 _t L^2 _z}.
\end{align*}
For $I\hspace{-1.2pt}I$, using (\ref{251241554}), we obtain
\begin{align*}
\|I\hspace{-1.2pt}I\|_{L^2 _t L^2 _z} &\lesssim \left\| \int_{\{|t-s| > \frac{1}{\epsilon} \}} \|U_{\bullet} (t)U_{\bullet} (s)^* W_2 (s)F(s)\|_{\infty} ds \right\|_{L^2 ([-T, T])} \\
& \lesssim \left\| \int_{\{|t-s| > \frac{1}{\epsilon} \}} \epsilon ^{\frac{d}{2}} \|W_2 (s)\|_2 \|F(s)\|_2 ds \right\|_{L^2 ([-T, T])} \lesssim \epsilon ^{\frac{d}{2}} \|F\|_{L^2 _t L^2 _z}.
\end{align*}
In the above estimates, implicit constants depend on $T, |\Omega|$ (and hence $W_1$ and $W_2$) but uniform in $\epsilon$. Therefore we get
\begin{align}
\|W_1 T_{0, \epsilon} W_2 - W_1 T W_2\|_{\mathcal{L} (L^2 (M^{\circ}))} \lesssim \epsilon ^{\frac{1}{2}} + \epsilon ^{\frac{d}{2}} \to 0 \label{251251531}
\end{align}
as $\epsilon \to 0$. We denote the singular values of a compact operator $A$ by $\{\mu _n (A)\}$. Then (\ref{251251531}) yields $\mu _n (W_1 T_{0, \epsilon} W_2) \to \mu _n (W_1 T W_2)$ as $\epsilon \to 0$ for all $n \in \N$ by a discussion in \cite{Si} p.26. By Fatou's lemma and (\ref{251251150}) we obtain $W_1 T W_2 \in \mathfrak{S}^{\tilde{r}}$ and
\begin{align}
\|W_1 TW_2\|_{\mathfrak{S}^{\tilde{r}}} = \|\{\mu _n (W_1 T W_2)\}\|_{\ell^{{\tilde{r}}}} &\le \varliminf_{\epsilon \to 0} \|\{\mu _n (W_1 T_{0, \epsilon} W_2)\}\|_{\ell^{{\tilde{r}}}} \notag \\
&= \varliminf_{\epsilon \to 0} \|W_1 T_{0, \epsilon} W_2\|_{\mathfrak{S}^{\tilde{r}}}
 \lesssim \|W_1\|_{L^{\tilde{q}, 2\tilde{r}} _t L^{\tilde{r}} _z} \|W_2\|_{L^{\tilde{q}, 2\tilde{r}} _t L^{\tilde{r}} _z}. \label{251251556}
\end{align} 
Now (\ref{251251602}) follows from (\ref{251251556}) and the duality principle (see \cite{H3} Lemma 2.1 or \cite{BLN} Proposition 1 for our situation but they are originated in \cite{FS}). 
\end{proof}
To add a potential $V$, we employ the perturbation method in \cite{H1}. The assumptions on $V$ (very short-range condition, absence of nonpositive eigenvalues and zero resonances) are made only to use the uniform resolvent estimates. Actually any potential $V$ is allowed as long as $|V|^{\frac{1}{2}}$ is $-\Delta_{g}$-smooth and $|V|^{\frac{1}{2}} P_{\ac}$ is $-\Delta _{g} +V$-smooth. 
\begin{proof}[Proof of Theorem \ref{251241323}]
First we assume $V =0$. Then by summing up (\ref{251251602}) with respect to $j = 1, \dots, N$, we obtain
\begin{align*}
\left\| \sum_{j=0}^{\infty} \nu_j |e^{itP} f_j|^2 \right\|_{L^{\frac{q}{2}, \beta} _t  L^{\frac{r}{2}} _z} \lesssim \|\nu\|_{\ell^{\beta}}
\end{align*}
for all $(q, r)$ and $\beta$ as in Proposition \ref{251241557}. Then Theorem \ref{251241323} follows from a simple interpolation argument (see Proof of Theorem 1.1 in \cite{H3}). Next we consider general cases. We use the following uniform resolvent estimates (Proposition 3.1 in \cite{ZZ2}):
\begin{align*}
\sup_{\sigma \in \C \setminus \R} \|\langle z \rangle^{-1- \frac{\epsilon}{2}} (P- \sigma)^{-1} \langle z \rangle^{-1-\frac{\epsilon}{2}} \|_{\mathcal{L} (L^2 (M^{\circ}))} < \infty.
\end{align*} 
We decompose $P= -\Delta _{g} +V = -\Delta _{g} + |V|^{\frac{1}{2}} \cdot (|V|^{\frac{1}{2}} \sgn V)$ and use Theorem 2.3 in \cite{H1} to obtain the desired estimates.
\end{proof}
\begin{remark}\label{251261538}
Even if $(M^{\circ}, g)$ has a mild trapped set in the sense of \cite{BGH}, we have a similar estimate for the propagator at low energy $e^{itP} \phi (P)$, where $\phi$ is a cutoff function around low energy. This is because trapped sets are irrelevant to low energy estimates and we can use an analogous decomposition of the propagator. Contrary to this, high energy estimates do not seem to be easy because Littlewood-Paley type arguments are not effective in the orthonormal setting (see Introduction of \cite{BHLNS}), though they are used in the ordinary Strichartz estimates (see \cite{ZZ2} and \cite{BM}). 
\end{remark}
\subsection{Nontrapping asymptotically hyperbolic manifolds}\label{25271436}
In this subsection we consider the orthonormal Strichartz estimates on nontrapping asymptotically hyperbolic manifolds. First we recall their definition in a similar manner as in Assumption \ref{25121723}.
\begin{assump}\label{251241320}
$(M^{\circ}, g)$ is a $d$-dimensional noncompact complete Riemannian manifold with $d \ge 3$. There exists a compact subset $K \subset M^{\circ}$ such that $M^{\circ} \backslash K$ is diffeomorphic to $(0, \infty) \times Y$. Here $Y$ is a $(d-1)$-dimensional compact connected manifold. We also assume that there exists a compactification $M$ of $M^{\circ}$ such that $\partial M = Y$ and in a collar neighborhood of $\partial M$, $[0, \epsilon_{0})_{x} \times Y_{y}$, $g$ takes a form $g = \frac{dx^2}{x^2} + \frac{h(x)}{x^2}$. Here $h \in C^{\infty} ([0, \epsilon_0); S^2 T^* Y)$. 
\end{assump}
Under Assumption \ref{251241320}, $-\Delta_{g} \upharpoonright_{C^{\infty} _0 (M^{\circ})}$ is essentially self-adjoint on $L^2 (M^{\circ}, dg)$. Its spectrum satisfies $\sigma (-\Delta_{g}) = \sigma_{\ac} (-\Delta_{g}) \cup \sigma_{\pp} (-\Delta_{g})$, $\sigma_{\ac} (-\Delta_{g}) = \left[\frac{(d-1)^2}{4}, \infty \right)$ and $\sigma_{\pp} (-\Delta_{g}) \subset \left(0, \frac{(d-1)^2}{4} \right)$.
Contrary to the result on asymptotically conic manifolds, the orthonormal Strichartz estimates on asymptotically hyperbolic manifolds admit wider range of exponents $(q, r, \beta)$. One reason of this is the Kunze-Stein phenomenon, which allows Young-type inequalities with wider admissible exponents than in the Euclidean space.
\begin{thm}\label{252101626}
Let $(q, r) \in [2, \infty] \times (2, \infty]$ satisfy $\frac{2}{q} \ge d(\frac{1}{2} - \frac{1}{r})$ and $\beta \in [1, \infty]$. Suppose $(M^{\circ}, g)$ is nontrapping. We assume either of the following conditions: $(i)$ $\frac{2}{q} > d(\frac{1}{2} - \frac{1}{r})$, $\frac{1}{r} > \frac{2}{q} - \frac{1}{2}$ and $\beta < \frac{2r}{r+2}$, $(ii)$ $\frac{2}{q} = d(\frac{1}{2} - \frac{1}{r})$, $\frac{1}{r} > \frac{2}{q} - \frac{1}{2}$ and $\beta = \frac{2r}{r+2}$, $(iii)$ $\frac{1}{r} \le \frac{2}{q} - \frac{1}{2}$ and $\beta < \frac{q}{2}$. Then
\begin{align}
\left\| \sum_{j=0}^{\infty} \nu_j |e^{it\Delta_{g}} P_{\ccc}  f_j|^2 \right\|_{L^{\frac{q}{2}} _t  L^{\frac{r}{2}} _z} \lesssim \|\nu\|_{\ell^{\beta}} \label{252101709}
\end{align}
holds for any orthonormal system $\{f_j\}_{j=0}^{\infty} \subset L^2 (M^{\circ}, dg)$ and any complex-valued sequence $\nu = \{\nu_{j}\}_{j=0}^{\infty}$.
\end{thm}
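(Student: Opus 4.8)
The plan is to reduce (\ref{252101709}), via the Frank--Sabin duality principle (\cite{H3} Lemma 2.1, \cite{BLN} Proposition 1, originally \cite{FS}), to Schatten-class bounds
\begin{align*}
\left\| W_1\, e^{it\Delta_{g}} P_{\ccc}\, \bigl(e^{is\Delta_{g}} P_{\ccc}\bigr)^{*}\, W_2 \right\|_{\mathfrak{S}^{\alpha}} \lesssim \|W_1\|_{X}\,\|W_2\|_{X}
\end{align*}
for simple functions $W_1, W_2$, where $X = L^{\tilde q,\,2\tilde r}_t L^{\tilde r}_z$ with $\tilde r = 2(\tfrac{r}{2})'$ (as in Proposition \ref{251241557}) and $\alpha = \beta'$, the precise exponents being those dual to the given $(q,r,\beta)$ in each regime. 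Since $\sigma_{\pp}(-\Delta_g) \subset (0, \tfrac{(d-1)^2}{4})$ and there is no singular continuous spectrum, $P_{\ccc}$ projects onto the absolutely continuous subspace; writing the a.c.\ part of $-\Delta_g$ as $L + \tfrac{(d-1)^2}{4}$ with $\sigma(L) = [0,\infty)$, one has $e^{it\Delta_g}P_{\ccc} = e^{-it(d-1)^2/4}\int_0^\infty e^{-it\mu^2}\, dE_{\sqrt{L}}(\mu)$, the unimodular prefactor being irrelevant. Using the nontrapping hypothesis, fix a microlocal operator partition of unity $\{Q_j(\mu)\}_{j=1}^N$ adapted to the hyperbolic ends (analogous to \cite{HZ} in the conic case; cf.\ also \cite{Che1}) and a frequency cutoff $\chi_{\le 1}(\mu) + \chi_{\ge 1}(\mu) = 1$, reducing matters to $\mathfrak{S}^{\alpha}$-bounds for $W_1\, U_j^{\sharp}(t)\, U_j^{\sharp}(s)^* W_2$, where $U_j^{\sharp}(t) = \int_0^\infty e^{-it\mu^2} \chi_{\sharp}(\mu) Q_j(\mu)\, dE_{\sqrt{L}}(\mu)$ and $\sharp \in \{\le 1, \ge 1\}$.

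For the high-energy pieces the partition localizes the kernel of $U_j^{\ge 1}(t) U_j^{\ge 1}(s)^*$ near the diagonal, so $\bigl|U_j^{\ge 1}(t) U_j^{\ge 1}(s)^*(z,z')\bigr| \lesssim |t-s|^{-d/2}$ exactly as in (\ref{251241554}), conjugate points playing no role. The argument of Proposition \ref{251241557} then applies essentially verbatim --- the analytic family $T_{\omega,\epsilon}$, the $\mathfrak{S}^2$ bound for $\re\omega \in (\tfrac{d-1}{2}, \tfrac{d}{2})$ via Hardy--Littlewood--Sobolev, the $\mathfrak{S}^{\infty}$ bound at $\re\omega = -1$ via the Fourier transform in $t$, complex interpolation, and Fatou's lemma as $\epsilon \to 0$ --- and, together with the interpolation of its conclusion, controls the high-energy contribution for all the exponents at issue; being localized near the diagonal with a full dispersive bound, this part reduces to the conic analysis and does not see the hyperbolic structure.

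The low-energy pieces $U_j^{\le 1}$ are the heart of the matter. Here the argument must depart from the conic case: near the boundary the geometry is hyperbolic, and for $|t-s| \gtrsim 1$ the microlocalized low-energy propagator does not decay fast enough in $|t-s|$ to run the Hardy--Littlewood--Sobolev step (the phenomenon noted just before the statement). The remedy is Lemma \ref{25251030}, which furnishes, for $|t-s| \gtrsim 1$, a kernel bound for $U_j^{\le 1}(t) U_j^{\le 1}(s)^*$ decaying exponentially in the geodesic distance $\mathrm{d}(z,z')$ (the range $|t-s| \lesssim 1$ being treated exactly as for the high-energy pieces). This exponential spatial decay realizes $U_j^{\le 1}(t)U_j^{\le 1}(s)^*$, in that range, as (a superposition of) convolution-type operators bounded on $L^{\rho}(M^\circ)$ for a range of $\rho$ strictly wider than exponential volume growth would naively allow --- the Kunze--Stein phenomenon mentioned earlier --- and it is precisely this spatial gain, fed through the duality and the interpolation, that enlarges the admissible set: the strict inequality $\tfrac{2}{q} > d(\tfrac{1}{2} - \tfrac{1}{r})$ in case $(i)$ and the region $\tfrac{1}{r} \le \tfrac{2}{q} - \tfrac{1}{2}$ in case $(iii)$, with the borderline case $(ii)$ requiring the Lorentz refinement in $t$ as in Proposition \ref{251241557}. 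One then performs the remaining $t,s$ integration by a Young- or Hardy--Littlewood--Sobolev-type inequality in time, checking compatibility with $\alpha$ and the Lorentz index; recombining the high- and low-energy bounds, summing over $j$, and invoking the duality principle in the reverse direction yields (\ref{252101709}).

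The step I expect to be hardest is the low-energy analysis: proving the mixed time/space kernel estimate of Lemma \ref{25251030} with its exponential spatial decay, and --- more delicately --- turning the Kunze--Stein gain into \emph{Schatten}-norm (not merely operator-norm) spacetime bounds sharp enough to reach the endpoint index $\beta = \tfrac{2r}{r+2}$ of case $(ii)$ and the full ranges of cases $(i)$ and $(iii)$.
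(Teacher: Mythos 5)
Your overall architecture is right: duality to Schatten bounds, a frequency split into $U_{\low}$ and the microlocalized high-energy pieces $U_k$, pointwise kernel bounds, $\mathfrak S^2$ and $\mathfrak S^\infty$ estimates, interpolation, and Fatou. But there is a genuine gap in the treatment of the high-energy pieces.

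You assert that the high-energy pieces ``do not see the hyperbolic structure'': that a global dispersive bound $|U_j^{\ge 1}(t)U_j^{\ge 1}(s)^*(z,z')|\lesssim|t-s|^{-d/2}$ plus the conic argument of Proposition \ref{251241557}, ``together with the interpolation of its conclusion,'' already controls the high-energy contribution for all exponents in the theorem. This is not the case. The conic machinery (HLS for $\re\omega\in(\tfrac{d-1}{2},\tfrac{d}{2})$ interpolated against an $\mathfrak S^\infty$ bound at $(\infty,2)$) produces estimates only \emph{on} the $\tfrac d2$-admissible line $\tfrac 2q = d(\tfrac12-\tfrac1r)$; interpolating points on this line with each other, or with $(q,r)=(\infty,2)$ which also lies on it, can never leave the line. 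Even if you additionally split the high-energy piece in time and apply Young's inequality to the long-time part using only the $|t-s|^{-d/2}$ kernel bound, the $L^{r'}\to L^{r}$ operator norm you can extract degenerates as $r\downarrow 2$ (since $\|U_\bullet(t)U_\bullet(s)^*\|_{L^{r'}\to L^r}\lesssim |t-s|^{-d(1/2-1/r)}$ is not integrable at infinity once $r$ is close to $2$), so you cannot reach $\mathfrak S^\infty$ bounds at $(q_0,r_0)$ with $q_0$ near $2$ and $r_0$ strictly below $\tfrac{2d}{d-2}$. Those are precisely the extra endpoints needed, after interpolation, to populate the enlarged regions in cases $(i)$ and $(iii)$.

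The fix is exactly what the paper does: the Kunze--Stein/exponential-spatial-decay gain is \emph{not} confined to the low-energy piece. The kernel bounds from Proposition~6 of \cite{Che1} carry the factor $e^{-\frac{d-1}{2}d(z,z')}$ for \emph{both} $K_{\low}$ and $K_k$, and Lemma \ref{25251030} is stated and used uniformly for $\bullet\in\{\low,1,\dots,N\}$: for every piece and every $r\in[1,2)$ one has $\|T_\bullet f\|_{r'}\lesssim|t|^{-3}\|f\|_r$ for $|t|\ge1$. The correct decomposition is therefore a product of a frequency split \emph{and} a time split $\{\epsilon<|t-s|<1\}\cup\{1<|t-s|<\tfrac1\epsilon\}$ applied to \emph{every} $T^\bullet$ (this is what $T^{\bullet,1}_{\omega,\epsilon}$, $T^{\bullet,2}_{\omega,\epsilon}$ do in \eqref{25271421}): the short-time part is handled by the conic/dispersive argument (Lemma \ref{2528955}) and gives the admissible line; the long-time part is handled via Lemma \ref{25251030} (Lemma \ref{25271434}) and is what supplies the $\mathfrak S^2$ bound in $L^{\tilde Q}_tL^{\tilde R}_z$ for a range of $\tilde R$ and the $\mathfrak S^\infty$ bounds at points $(q_0,r_0)$ with $q_0\ge2$ and $r_0>2$ free, which together with interpolation open up regions $(i)$ and $(iii)$. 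If you drop the exponential-decay analysis for $U_k$, the final sum $\sum_\bullet$ is constrained by the weakest summand, and you only prove case $(ii)$.
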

Let $P= \sqrt{(-\Delta_g - \frac{(d-1)^2}{4})_+}$. We take cutoff functions $\chi_{\low}$ and $\chi_{\infty}$ such that $\chi_{\low} \in C^{\infty} ([0, \infty); [0, 1])$, $\supp \chi_{\low} \subset [0, 2]$, $\chi_{\infty} \in C^{\infty} ([0, \infty); [0, 1])$, $\supp \chi_{\infty} \subset [1, \infty)$ and $\chi_{\low} + \chi_{\infty} = 1$ in $[0, \infty)$. Then, letting $P_{\ccc}$ be the orthogonal projection onto the (absolutely) continuous subspace of $-\Delta_{g}$, it is shown in \cite{Che1} that
\begin{align*}
e^{-it\Delta_{g} - \frac{it(d-1)^2}{4}} P_{\ccc} = \int_{0}^{\infty} e^{it\lambda ^2} dE_{P} (\lambda ) = U_{\low} (t) + \sum_{k=0}^{N} U_{k} (t)
\end{align*}
holds, where the propagators in the last two terms are defined by
\begin{align*}
U_{\low} (t) = \int_{0}^{\infty} e^{it\lambda ^2} \chi_{\low} (\lambda) dE_{P} (\lambda ),\quad U_{k} (t) = \int_{0}^{\infty} e^{it\lambda ^2} \chi_{\infty} (\lambda) Q_{k} (\lambda) dE_{P} (\lambda ).
\end{align*}
Here $\{Q_{k} (\lambda)\}_{k=0}^{N}$ is again a pseudodifferential partition of unity: $\sum_{k=0}^{N} Q_k (\lambda) = \Iden$. $N$ is independent of $\lambda \in [0, \infty)$. By definition $U_{\low} (t)$ is uniformly bounded on $L^2 (M^{\circ})$ and $U_{k} (t)$ are also well-defined uniformly bounded operators. In asymptotically hyperbolic manifolds, pointwise estimates of these propagators are different from those in asymptotically conic manifolds. In the present situation the propagators have exponential decay in the spatial variable and the decay order in time differs depending on the energy region. We set
\begin{align}
&K_{k} (t, z, z') = \int_{0}^{\infty} e^{it\lambda ^2} \chi^2 _{\infty} (\lambda ) (Q_k (\lambda)dE_{P} (\lambda ) Q^* _k (\lambda)) (z, z') d\lambda, \label{25251015} \\
& K_{\low} (t, z, z') = \int_{0}^{\infty} e^{it\lambda ^2} \chi^2 _{\low} (\lambda) dE_P (\lambda) (z, z') d\lambda. \label{25251017}
\end{align}
Then by Proposition 6 in \cite{Che1} the following pointwise estimates hold: If $|t| > 1+d(z, z')$ then $|K_k (t, z, z')| \lesssim |t|^{-\infty} e^{-\frac{d-1}{2} d(z, z')}$ holds. If $|t| < 1+d(z, z')$ then $|K_k (t, z, z')| \lesssim |t|^{-\frac{d}{2}} (1+d(z, z'))^{\frac{d-1}{2}} e^{-\frac{d-1}{2} d(z, z')}$ holds. For all $t \in \R$, $|K_{\low} (t, z, z')| \lesssim |t|^{-\frac{3}{2}} (1+d(z, z')) e^{-\frac{d-1}{2} d(z, z')}$ holds. Based on these pointwise estimates we prove the following lemma which is used to estimate the Schatten norms later. Note that the estimates in \cite[{\S}8]{Che1} are not sufficient since we need to estimate the Hilbert-Schmidt norm. We use $d(\cdot, \cdot)$ or $d_Y (\cdot, \cdot)$ to denote the distance function on $M^{\circ}$ or $Y$ respectively. $M^2 _0$ is a manifold with corners (called the $0$-double space) obtained by blowing up $\{(0, y, 0, y) \in M^2 \mid y \in Y\}$ in $M^2$. See Section 3 of \cite{CheHa} for more details about this blown-up space.
\begin{lemma}\label{25251030}
Let $J_{\bullet} (t, z, z') = |K_{\bullet} (t, z, z')|^2$, where $\bullet = \low$ or $k \in \{1, \dots, N\}$. Then, for any $r \in [1, 2)$, the integral operators
\begin{align*}
T_{\bullet}: f \mapsto \int_{M^{\circ}} J_{\bullet} (t, z, z') f(z') dg(z')
\end{align*}
satisfy $\|T_{\bullet} f\|_{r'} \lesssim |t|^{-3} \|f\|_r$ uniformly in $|t| \ge 1$.
\end{lemma}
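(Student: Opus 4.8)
The plan is to bound $T_\bullet$ using the pointwise kernel estimates of \cite[Proposition 6]{Che1} together with the exponential volume growth of $M^\circ$, via the integral-operator version of Young's inequality. For $r\in[1,2)$ one has $r'\in(2,\infty]$, and the decisive observation is that for a conjugate pair $(r,r')$ with $r<2$ the operator $T_K$ is controlled by the $L^{r'/2}$-norm of its kernel, with exponent $r'/2>1$: raising $J_\bullet$ to the power $r'/2$ turns the spatial decay $e^{-(d-1)d(z,z')}$ into $e^{-(d-1)(r'/2)d(z,z')}$, which strictly dominates the volume growth $\VOL(B(z,\rho))\lesssim e^{(d-1)\rho}$. (Note $J_\bullet(t,z,\cdot)\notin L^1(M^\circ)$, so a Schur-type argument is unavailable; it is exactly the range $r<2$ that is exploited, and this is the same mechanism behind the broad range of exponents in Theorem \ref{252101626}.)

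First I would assemble a single, $|t|$-uniform, symmetric, nonincreasing radial majorant of $J_\bullet$. Writing $\rho=d(z,z')$, the two regimes $|t|\gtrless 1+\rho$ of \cite[Proposition 6]{Che1} merge, for $|t|\ge1$, into $|K_k(t,z,z')|\lesssim|t|^{-d/2}(1+\rho)^{(d-1)/2}e^{-\frac{d-1}{2}\rho}$ (in the far regime $\rho<|t|-1$ the rapidly decaying prefactor is $\lesssim|t|^{-d/2}\le|t|^{-d/2}(1+\rho)^{(d-1)/2}$), whence
\begin{align*}
J_k(t,z,z')=|K_k(t,z,z')|^2\lesssim|t|^{-d}(1+\rho)^{d-1}e^{-(d-1)\rho}=:\kappa_t^k(\rho),
\end{align*}
and likewise $J_{\low}(t,z,z')\lesssim|t|^{-3}(1+\rho)^2e^{-(d-1)\rho}=:\kappa_t^{\low}(\rho)$. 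Since $d\ge3$, both $\kappa_t^k$ and $\kappa_t^{\low}$ are nonincreasing on $[0,\infty)$ (immediate from differentiating), and $|T_\bullet f(z)|\le\int_{M^\circ}\kappa_t^\bullet(d(z,z'))\,|f(z')|\,dg(z')$.

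Next I would apply the integral-operator version of Young's inequality for the conjugate pair $(r,r')$: if a nonnegative kernel $K$ on $M^\circ\times M^\circ$ satisfies $\sup_z\|K(z,\cdot)\|_{L^{r'/2}}\le A$ and $\sup_{z'}\|K(\cdot,z')\|_{L^{r'/2}}\le A$, then $\|T_K\|_{L^r\to L^{r'}}\le A$ (the three-fold Hölder argument proving Young's convolution inequality works verbatim, using $\tfrac1{r'}+\tfrac1{r'}+(\tfrac2r-1)=1$). For the symmetric kernel $\kappa_t^\bullet(d(\cdot,\cdot))$ the two suprema coincide, and to estimate $\|\kappa_t^\bullet(d(z,\cdot))\|_{L^{r'/2}}$ I would use the uniform volume growth of the asymptotically hyperbolic end: by Assumption \ref{251241320}, near $\partial M$ the Riemannian density is comparable to $x^{-d}\,dx\,dy\sim e^{(d-1)r}\,dr\,dy$ with $r=-\log x$, while on the compact core volumes are bounded, so $\VOL(B(z,\rho))\lesssim e^{(d-1)\rho}$ uniformly in $z$; integrating by parts in the distribution function then gives, for nonincreasing $\phi\ge0$ with sufficient decay and uniformly in $z$,
\begin{align*}
\int_{M^\circ}\phi(d(z,z'))\,dg(z')\lesssim\int_0^\infty\phi(\rho)\,e^{(d-1)\rho}\,d\rho.
\end{align*}
Taking $\phi=(\kappa_t^k)^{r'/2}$ and using $r'/2-1>0$ (so the integral $\int_0^\infty(1+\rho)^{(d-1)r'/2}e^{-(d-1)(r'/2-1)\rho}\,d\rho$ converges to a constant $C_{d,r}$) gives $\sup_z\|\kappa_t^k(d(z,\cdot))\|_{L^{r'/2}}\lesssim_{d,r}|t|^{-d}$; likewise $\sup_z\|\kappa_t^{\low}(d(z,\cdot))\|_{L^{r'/2}}\lesssim_{d,r}|t|^{-3}$. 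Therefore $\|T_kf\|_{r'}\lesssim|t|^{-d}\|f\|_r$ and $\|T_{\low}f\|_{r'}\lesssim|t|^{-3}\|f\|_r$, and since $d\ge3$ and $|t|\ge1$ both are $\lesssim|t|^{-3}\|f\|_r$ uniformly in $|t|\ge1$, which is the claim.

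The proof is largely bookkeeping, and I do not anticipate a genuine analytic obstacle. The two points deserving care are the uniform-in-$z$ volume-growth bound on the non-homogeneous manifold $M^\circ$ (read off from the collar form of $g$ together with compactness of the core) and the tidy assembly of the two-regime estimates of \cite[Proposition 6]{Che1} into one $|t|$-uniform majorant. The whole content of the lemma is that, for the diagonal pair $r<2$, the kernel is raised to a power exceeding $1$, after which the exponential spatial decay more than offsets the exponential volume growth — which is what the introduction refers to as the Kunze--Stein phenomenon.
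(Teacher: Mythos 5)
Your overall strategy is sound — merge the two time regimes of \cite[Proposition 6]{Che1} into one radial majorant, exploit that $r'/2>1$, and apply a mixed-norm Schur test — and the Schur test itself is correct (it interpolates the row/column $L^{r'/2}$-bounds between $L^1\to L^{r'/2}$ and $L^{(r'/2)'}\to L^\infty$, and $r\le(r'/2)'$ holds for $r\in[1,2)$). However, the step that carries the sharp exponent $d-1$ — the claim that $\VOL(B(z,\rho))\lesssim e^{(d-1)\rho}$ \emph{uniformly in} $z$, which you say can be ``read off from the collar form of $g$ together with compactness of the core'' — is a genuine gap. The collar form only gives $\VOL(\{z':x(z')>e^{-R}\})\lesssim e^{(d-1)R}$, i.e.\ the volume of radial cylinders, and the trivial containment $B(z,\rho)\subset\{r(\cdot)\le r(z)+\rho\}$ yields $\VOL(B(z,\rho))\lesssim e^{(d-1)(r(z)+\rho)}$, which blows up as $z\to\partial M$. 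To obtain the uniform $e^{(d-1)\rho}$ one must use that a geodesic ball is transversally much thinner than the cylindrical shell containing it — a genuinely curvature-based fact. On exact $\mathbb{H}^d$ it is homogeneity; in the generality of Assumption \ref{251241320}, Bishop--Gromov only delivers $e^{(d-1)\kappa\rho}$ with $\kappa\ge1$ depending on the worst Ricci curvature on the compact core, and the $r'/2-1>0$ margin in your last integral does not cover $\kappa>1$ uniformly as $r\to2^-$. The bound is plausible (for the exact funnel $dr^2+e^{2r}h_0$ one can prove it from the warped-product distance relation $\cosh d=\cosh(r-r')+\tfrac12e^{r+r'}d_Y(y,y')^2$ and a two-regime integral), but it is a computation comparable in length to the lemma itself, not a bookkeeping step.

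The paper's proof sidesteps exactly this by working on the $0$-double space $M^2_0$. Away from the front face, $d(z,z')+\log(xx')$ is uniformly bounded (\cite[Prop.~3.4]{CheHa}), turning the exponential decay of the kernel into explicit $(xx')^{d-1}$-type decay in the boundary defining functions, and the $L^{r'}$ kernel norm over $M^2_0\setminus\tilde U$ is evaluated directly. Near the front face, a finite cover by charts diffeomorphic to pieces of $(\mathbb{H}^d)^2_0$ reduces the estimate to \cite[Lemma 4.1]{AP2}, which is precisely the Kunze--Stein/Herz criterion on exact $\mathbb{H}^d$. This way the sharp constant $d-1$ enters through the exact model, not through a uniform volume-growth estimate on $M^\circ$. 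Your shorter route would be valid if you supply a proof of the uniform $L^{r'/2}$ row bound $\sup_z\int_{M^\circ}(1+d(z,z'))^{(d-1)r'/2}e^{-(d-1)(r'/2)d(z,z')}\,dg(z')<\infty$ directly on $M^\circ$; as written, it rests on an unproved geometric claim.
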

\begin{proof}
First notice that we have $|J_{\bullet} (t, z, z')| \lesssim |t|^{-3} (1+d(z, z'))^{d-1} e^{-(d-1)d(z, z')}$. Let $U$ and $\tilde{U}$ be small neighborhoods of the front face FF of $M^2 _0$ satisfying $\tilde{U} \subset U$ and $\chi$ be a smooth cutoff function of FF such that $\chi =1$ in $\tilde{U}$ and supported in $U$. Since $d(z, z') + \log (xx')$ is uniformly bounded away from $\tilde{U}$ (see Proposition 3.4 in \cite{CheHa}) we have
\begin{align*}
\left\| \int_{M^{\circ}} J_{\bullet} (t, z, z')(1-\chi (z, z')) f(z')dg(z') \right\|_{r'} \lesssim \|J_{\bullet} (t, \cdot, \cdot)(1-\chi (\cdot, \cdot))\|_{L^{r'} (M^2 _0 \setminus \tilde{U})} \|f\|_r
\end{align*}
and the first factor can be estimated as
\begin{align*}
\|J_{\bullet} (t, \cdot, \cdot)(1-\chi (\cdot, \cdot))\|_{L^{r'} (M^2 _0 \setminus \tilde{U})} &\lesssim |t|^{-3} \left( \int (1+d(z, z'))^{(d-1)r'} e^{-(d-1)d(z, z')r'} dg(z)dg(z') \right)^{\frac{1}{r'}} \\
& \lesssim |t|^{-3} \left( \int_{0}^{1} \int_{0}^{1} \langle \log (xx')\rangle^{(d-1)r'} (xx')^{(d-1)r'} \frac{dx}{x^{d-1}} \frac{dx'}{{x'}^{d-1}} \right)^{\frac{1}{r'}} \\
& \lesssim |t|^{-3}.
\end{align*}
On the other hand we decompose $U$ into finitely many $U_i \subset U$ on which $x, x' \le \eta$ and $d_Y (y, y_i), d_Y (y', y_i) \le \eta$ hold for some $y_i \in Y$. We use a local coordinate $(x, y)$ near $(0, y_i)$ to define $\phi _i : V_i := \{ x \le \eta, d_Y (y, y_i) \le \eta \} \to V'_i$, where $V' _i$ is a neighborhood of $(0, 0) \in \mathbb{H}^{d}$. Then $\phi _i$ induces a diffeomorphism $\Phi _i : U_i \to U'_i \subset (\mathbb{H}^d)^2 _0$. Now 
\begin{align*}
|\phi _i \circ J_{\bullet} (t, \cdot, \cdot) \chi (\cdot, \cdot) \chi_{U_i} (\cdot, \cdot) \circ \phi^{-1} _i| \lesssim |t|^{-3} (1+r)^{d-1} e^{-(d-1)r}
\end{align*}
holds with the geodesic distance $r$ on $\mathbb{H}^d$. This integral kernel induces a bounded operator $L^r (V'_i) \to L^{r'} (V'_i)$ by Lemma 4.1 in \cite{AP2}. Since the pullback by $\phi _i$ and $\phi^{-1} _i$ are bounded operators between $L^r (V_i)$ and $L^r (V' _i)$, $J_{\bullet} (t, \cdot, \cdot) \chi (\cdot, \cdot) \chi_{U_i} (\cdot, \cdot)$ induces a bounded operator: $L^r (V_i) \to L^{r'} (V_i)$. By summing up in $i$ we obtain
\begin{align*}
\left\| \int_{M^{\circ}} J_{\bullet} (t, z, z') \chi (z, z') f(z')dg(z') \right\|_{r'} \lesssim |t|^{-3} \|f\|_r
\end{align*}
for any $f \in L^r (M^{\circ})$.
\end{proof}
As in Subsection \ref{25271431}, we set
\begin{align}
T^{\bullet} _{\omega, \epsilon} F (t) = T^{\bullet, 1} _{\omega, \epsilon} F (t)+ T^{\bullet, 2} _{\omega, \epsilon} F (t) &= \int_{\R} \chi_{\{ \epsilon < |t-s|<1\}} (t-s)^{\omega} U_{\bullet} (t) U_{\bullet} (s)^* F(s)ds \notag \\
& \quad \quad +\int_{\R} \chi_{\{ 1 < |t-s|<\frac{1}{\epsilon}\}} (t-s)^{\omega} U_{\bullet} (t) U_{\bullet} (s)^* F(s)ds, \label{25271421}
\end{align}
$T^{\bullet} _{\epsilon} = T^{\bullet} _{0, \epsilon}$ and $T^{\bullet} F(t) = \int_{\R} U_{\bullet} (t) U_{\bullet} (s)^* F(s)ds$ for spacetime simple functions $F$. Recall that, for $a \in [2, \infty]$, $\tilde{a} \in [2, \infty]$ is defined by $\tilde{a} = 2(\frac{a}{2})'$. First we consider the case $|t-s|>1$.
\begin{lemma}\label{25271434}
Let $(q, r) := \mathbb{P} (\frac{2(d+1)}{d}, \frac{2(d+1)}{d-1})$ (note that this is a $\frac{d}{2}$-admissible pair). Then
\begin{align}
\|W_1T^{\bullet, 2} _{0, \epsilon} W_2 \|_{\mathfrak{S}^{\beta'}} \lesssim \|W_1\|_{L^{\tilde{q}} _t L^{\tilde{r}} _z} \|W_2\|_{L^{\tilde{q}} _t L^{\tilde{r}} _z} \label{252101605}
\end{align}
holds for $\beta = \frac{2r}{r+2}$ uniformly in $\epsilon \in (0, 1)$.
\end{lemma}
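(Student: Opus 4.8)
The plan is to prove \eqref{252101605} at $\omega=0$ directly, by interpolating a Hilbert--Schmidt bound against an operator-norm bound for $W_1T^{\bullet,2}_{0,\epsilon}W_2$ acting on $L^2(\R_t\times M^{\circ})$. The restriction $|t-s|>1$ built into $T^{\bullet,2}_{0,\epsilon}$ is the point: the time weights $|t-s|^{-3}$ and $|t-s|^{-3/2}$ furnished by Lemma \ref{25251030} are then integrable near infinity, which makes all estimates uniform in $\epsilon$. Write $K_{\bullet}(t-s,z,z')$ for the Schwartz kernel of $U_{\bullet}(t)U_{\bullet}(s)^{*}$, so $J_{\bullet}=|K_{\bullet}|^2$, and recall from \cite[Proposition 6]{Che1} that $|K_{\bullet}(\tau,z,z')|\lesssim|\tau|^{-\frac32}(1+d(z,z'))^{\frac{d-1}{2}}e^{-\frac{d-1}{2}d(z,z')}$ for $|\tau|>1$ (combining the two regimes there, using $d\ge3$). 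One may assume $W_1,W_2\ge0$.

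For the Hilbert--Schmidt endpoint, the time localization makes the square Hilbert--Schmidt norm split fibrewise in $(t,s)$:
\[
\|W_1T^{\bullet,2}_{0,\epsilon}W_2\|_{\mathfrak{S}^2}^2=\iint_{\{1<|t-s|<1/\epsilon\}}\big\|W_1(t)\,U_{\bullet}(t)U_{\bullet}(s)^{*}\,W_2(s)\big\|_{\mathfrak{S}^2(L^2(M^{\circ}))}^2\,dt\,ds .
\]
I would bound the inner norm with Lemma \ref{25251030}: writing $T^{(\tau)}_{\bullet}$ for the operator with kernel $J_{\bullet}(\tau,\cdot,\cdot)$ and using $\langle f,T^{(\tau)}_{\bullet}g\rangle\le\|f\|_{L^r}\|T^{(\tau)}_{\bullet}g\|_{L^{r'}}$,
\[
\big\|W_1(t)U_{\bullet}(t)U_{\bullet}(s)^{*}W_2(s)\big\|_{\mathfrak{S}^2}^2=\big\langle|W_1(t)|^2,\,T^{(t-s)}_{\bullet}|W_2(s)|^2\big\rangle\lesssim|t-s|^{-3}\|W_1(t)\|_{L^{2r}_z}^2\|W_2(s)\|_{L^{2r}_z}^2
\]
for any $r\in[1,2)$. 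Inserting this and applying Young's inequality in $t$ — legitimate uniformly in $\epsilon$ since $\chi_{\{1<|u|<1/\epsilon\}}|u|^{-3}\in L^1(\R)$ with $\epsilon$-independent norm — gives $\|W_1T^{\bullet,2}_{0,\epsilon}W_2\|_{\mathfrak{S}^2}\lesssim\|W_1\|_{L^4_tL^{2r}_z}\|W_2\|_{L^4_tL^{2r}_z}$ for all $r\in[1,2)$.

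For the operator-norm endpoint I would exploit the exponential spatial decay. Repeating the localization of Lemma \ref{25251030} on the $0$-double space $M^2_0$ (near the front face $M^{\circ}$ is modeled on $\mathbb{H}^d$ and one applies \cite[Lemma 4.1]{AP2}; away from it $d(z,z')+\log(xx')$ is bounded), the pointwise kernel bound above yields $\|U_{\bullet}(t)U_{\bullet}(s)^{*}\|_{L^p(M^{\circ})\to L^{p'}(M^{\circ})}\lesssim|t-s|^{-\frac32}$ for $|t-s|>1$ and $p\in[1,2)$ (possibly with $p$ kept bounded away from $2$ on account of the polynomial factor $(1+d(z,z'))^{\frac{d-1}{2}}$; the case $p=1$ is just the sup bound $\|K_{\bullet}(\tau,\cdot,\cdot)\|_{\infty}\lesssim|\tau|^{-\frac32}$). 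Sandwiching with H\"older gives $\|W_1(t)U_{\bullet}(t)U_{\bullet}(s)^{*}W_2(s)\|_{L^2_z\to L^2_z}\lesssim|t-s|^{-\frac32}\|W_1(t)\|_{L^{\rho}_z}\|W_2(s)\|_{L^{\rho}_z}$ for $\rho\in[2,\infty)$ with $\tfrac1\rho=\tfrac12-\tfrac1{p'}$, and then
\[
\|W_1T^{\bullet,2}_{0,\epsilon}W_2F(t)\|_{L^2_z}\le\int_{\{1<|t-s|<1/\epsilon\}}\big\|W_1(t)U_{\bullet}(t)U_{\bullet}(s)^{*}W_2(s)\big\|_{L^2_z\to L^2_z}\|F(s)\|_{L^2_z}\,ds ,
\]
so one more application of Young in $t$ — now with $\chi_{\{1<|u|<1/\epsilon\}}|u|^{-3/2}\in L^1(\R)$ uniformly — gives $\|W_1T^{\bullet,2}_{0,\epsilon}W_2\|_{\mathfrak{S}^{\infty}}\lesssim\|W_1\|_{L^{\infty}_tL^{\rho}_z}\|W_2\|_{L^{\infty}_tL^{\rho}_z}$ for all $\rho\in[2,\infty)$.

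Finally I would interpolate these two bilinear bounds (raising the multipliers $W_i$ to complex powers in the usual way) with parameter $\theta=\tfrac{d-1}{d+1}$: then $[\mathfrak{S}^2,\mathfrak{S}^{\infty}]_{\theta}=\mathfrak{S}^{2/(1-\theta)}=\mathfrak{S}^{d+1}$ and $[L^4_t,L^{\infty}_t]_{\theta}=L^{2(d+1)}_t$, while the spatial exponent satisfies $\tfrac1{\tilde r}=\tfrac1{d+1}\big(\tfrac1r+\tfrac{d-1}{\rho}\big)$. Choosing $r\in(1,2)$ and $\rho\in(2(d-1),\infty)$ with $\tfrac1r+\tfrac{d-1}{\rho}=1$ — for instance $r=\tfrac43$, $\rho=4(d-1)$, for which the corresponding $p=\tfrac{4(d-1)}{2d-1}$ does lie in $[1,2)$ — forces $\tilde r=d+1$; since for the stated pair $\beta'=d+1=\tilde r$ and $\tilde q=2(d+1)$, this is precisely \eqref{252101605}, uniformly in $\epsilon$. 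The main obstacle I anticipate is exactly the operator-norm endpoint: the interpolation needs the spatial exponent there to be allowed arbitrarily large (to reach $\tilde r=d+1$), so the cheap bound $\mathfrak{S}^{\infty}\le\mathfrak{S}^2$ (which would only produce $L^{2r}_z$ with $2r<4$) is insufficient, and one genuinely needs the $L^p\to L^{p'}$ smoothing of $U_{\bullet}(t)U_{\bullet}(s)^{*}$ coming from the exponential decay together with the Kunze--Stein phenomenon, while keeping $p$ away from $2$ so that the polynomial factor in the kernel estimate does no harm.
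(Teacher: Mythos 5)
Your proposal is correct and follows essentially the same route as the paper: a Hilbert--Schmidt bound from the $|t-s|^{-3}$ decay of the operator with kernel $J_{\bullet}=|K_{\bullet}|^{2}$ (Lemma~\ref{25251030}), an operator-norm bound from the $|t-s|^{-3/2}$ decay of the operator with kernel $K_{\bullet}$ (obtained by the same $0$-double-space localization plus \cite[Lemma 4.1]{AP2}), followed by complex interpolation. The only cosmetic differences are that you fix a single pair of interpolation endpoints (e.g.\ $r=\tfrac43$, $\rho=4(d-1)$) landing exactly at $\mathbb{P}$, whereas the paper keeps a one-parameter family of Hilbert--Schmidt estimates and chooses endpoints $\mathbb{A}$, $\mathbb{B}$ straddling $\mathbb{P}$; also, the paper's line $\|(\chi_{\{|\cdot|\ge1\}}|\cdot|^{-3})*\|F(\cdot)\|_{r_0'}\|_{q_0}$ in the operator-norm step carries the $J_{\bullet}$ time-exponent $-3$ where one really uses the $K_{\bullet}$ exponent $-3/2$ (as you do); since both powers are integrable on $\{|\tau|>1\}$ this is inconsequential, and you are right that it is the exponential spatial decay, not the size of the negative time power, that does the work at the $\mathfrak{S}^{\infty}$ endpoint.
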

\begin{proof}
Since the integral kernel of $U_{\bullet} (t) U_{\bullet} (s)^*$ is $K_{\bullet} (t-s, z, z')$ we have
\begin{align*}
\|W_1T^{\bullet, 2} _{0, \epsilon} W_2 \|^2 _{\mathfrak{S}^{2}} &= \int_{1 < |t-s|<\frac{1}{\epsilon}} |W_1 (t, z)|^2 J_{\bullet} (t-s, z, z') |W_2 (s, z')|^2 dsdtdg(z)dg(z') \\
&\lesssim \int_{|t-s|>1} \|W_1(t)\|^2 _{2r} \left( \int \left\| \int J_{\bullet} (t-s, z, z') |W_2 (s, z')|^2 dg(z') \right\|_{r'} ds \right) dt \\
& \lesssim  \int_{|t-s|>1} \|W_1(t)\|^2 _{2r} |t-s|^{-3} \|W_2 (s)\|^2 _{2r} dsdt \\
& \lesssim \|W_1\|^2 _{L^{2q} _t L^{2r} _z} \|(\chi_{\{ |\cdot| \ge 1\}} |\cdot|^{-3}) * \|W_2\|^2 _{2r}\|_{q'} \lesssim \|W_1\|^2 _{L^{2q} _t L^{2r} _z} \|W_2\|^2 _{L^{2q} _t L^{2r} _z}
\end{align*}
for $r \in [1, 2)$ and $q \in [1, 2]$ by Lemma \ref{25251030}. Hence for $Q \in [4, \infty]$ and $R \in (4, \infty]$,
\begin{align}
\|W_1T^{\bullet, 2} _{0, \epsilon} W_2 \|_{\mathfrak{S}^{2}} \lesssim \|W_1\|_{L^{\tilde{Q}} _t L^{\tilde{R}} _z} \|W_2\|_{L^{\tilde{Q}} _t L^{\tilde{R}} _z} \label{25271545}
\end{align}
holds. Next we take $(q_0, r_0)$ such that $q_0 \ge 2$, $r_0 >2$ and $\frac{2}{q_0} \ge d(\frac{1}{2} - \frac{1}{r_0})$. Then we have $\|W_1T^{\bullet, 2} _{0, \epsilon} W_2 \|_{\mathfrak{S}^{\infty}} \lesssim \|W_1\|_{L^{\tilde{q_0}} _t L^{\tilde{r_0}} _z} \|W_2\|_{L^{\tilde{q_0}} _t L^{\tilde{r_0}} _z} \|T^{\bullet, 2} _{0, \epsilon}\|_{L^{q'_0} _t L^{r'_0} _z \to L^{q_0} _t L^{r_0} _z}$ by H\"older's inequality. By the definition of $T^{\bullet, 2} _{0, \epsilon}$ we obtain
\begin{align*}
\|T^{\bullet, 2} _{0, \epsilon} F\|_{L^{q_0} _t L^{r_0} _z} &= \left\| \int_{\{1 < |t-s|<\frac{1}{\epsilon}\}} U_{\bullet} (t) U_{\bullet} (s)^* F(s)ds \right\|_{L^{q_0} _t L^{r_0} _z} \\
& \lesssim \left( \int \left( \int_{\{1 < |t-s|<\frac{1}{\epsilon}\}} \|U_{\bullet} (t) U_{\bullet} (s)^* F(s)\|_{r_0} ds \right)^{q_0} dt \right)^{\frac{1}{q_0}} \\
& \lesssim \|(\chi_{\{ |\cdot| \ge 1\}} |\cdot|^{-3}) * \|F(\cdot)\|_{r'_0} \|_{q_0} \lesssim \|F\|_{L^{q'_0} _t L^{r'_0} _z}
\end{align*}
uniformly in $\epsilon \in (0, 1)$, which yields
\begin{align}
\|W_1T^{\bullet, 2} _{0, \epsilon} W_2 \|_{\mathfrak{S}^{\infty}} \lesssim \|W_1\|_{L^{\tilde{q_0}} _t L^{\tilde{r_0}} _z} \|W_2\|_{L^{\tilde{q_0}} _t L^{\tilde{r_0}} _z}. \label{2528938}
\end{align}
Now in the $(\frac{1}{q}, \frac{1}{r})$ plane we take $\mathbb{A}(\frac{1}{4}, \frac{1}{4} - \eta)$ for sufficiently small $\eta >0$ such that the line $\mathbb{P} \mathbb{A}$ intersects with the line $\{\frac{1}{q} = \frac{1}{2}\}$ at $\mathbb{B}$ which satisfies the same conditions as $(q_0, r_0)$. Then we can use the complex interpolation for (\ref{25271545}) at $\mathbb{A}$ and (\ref{2528938}) at $\mathbb{B}$ to obtain the desired estimate.
\end{proof}
In the following situation, $|t-s|<1$, the proof is identical to that in asymptotically conic manifolds. Hence we omit details here.
\begin{lemma}\label{2528955}
Let $(q, r) \in [2, \infty]^2$ satisfy $\frac{2}{q} = d(\frac{1}{2} - \frac{1}{r})$, $r \in [2, \frac{2(d+1)}{d-1})$ and $\beta = \frac{2r}{r+2}$. Then
\begin{align*}
\|W_1T^{\bullet, 1} _{0, \epsilon} W_2 \|_{\mathfrak{S}^{\beta'}} \lesssim \|W_1\|_{L^{\tilde{q}} _t L^{\tilde{r}} _z} \|W_2\|_{L^{\tilde{q}} _t L^{\tilde{r}} _z}
\end{align*} 
holds uniformly in $\epsilon \in (0, 1)$.
\end{lemma}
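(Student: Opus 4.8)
The plan is to repeat, on the truncated region $\{\epsilon<|t-s|<1\}$, the analytic-interpolation argument carried out for Proposition~\ref{251241557} in the asymptotically conic case; the only place the asymptotically hyperbolic geometry could enter is the pointwise control of the kernel $K_{\bullet}(t-s,z,z')$ of $U_{\bullet}(t)U_{\bullet}(s)^*$. The key observation is that $|t-s|<1$ forces $|t-s|<1+d(z,z')$, so for the high-energy pieces the first bound of \cite[Prop.~6]{Che1} applies everywhere, $|K_{k}(t-s,z,z')|\lesssim|t-s|^{-d/2}(1+d(z,z'))^{(d-1)/2}e^{-\frac{d-1}{2}d(z,z')}$, while the low-energy piece obeys $|K_{\low}(t-s,z,z')|\lesssim|t-s|^{-3/2}(1+d(z,z'))e^{-\frac{d-1}{2}d(z,z')}$ for all $t$. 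Since $\sup_{\rho\ge 0}(1+\rho)^{(d-1)/2}e^{-(d-1)\rho/2}<\infty$ and $\sup_{\rho\ge 0}(1+\rho)e^{-(d-1)\rho/2}<\infty$ for $d\ge 3$, and since $|t-s|^{-3/2}\le|t-s|^{-d/2}$ whenever $0<|t-s|<1$ and $d\ge 3$, it follows that $|K_{\bullet}(t-s,z,z')|\lesssim|t-s|^{-d/2}$ uniformly in $z,z'\in M^{\circ}$ and $0<|t-s|<1$. This is exactly the dispersive profile (\ref{251241554}) driving the conic proof, now restricted to $\{|t-s|<1\}$ (which only helps); unlike the regime $|t-s|>1$ treated in Lemmas~\ref{25251030} and~\ref{25271434}, no Kunze--Stein or hyperbolic-geometry input is needed here, the only difficulty being the time singularity at $t=s$.

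From this point the argument is that of Proposition~\ref{251241557}, with the obvious changes. For $\omega$ in the strip $\re\omega\in[-1,\frac{d}{2}]$ and $\epsilon\in(0,1)$ I would use the analytic family $T^{\bullet,1}_{\omega,\epsilon}$ from (\ref{25271421}). On the line $\re\omega\in(\frac{d-1}{2},\frac{d}{2})$, the bound $|K_{\bullet}|^{2}\lesssim|t-s|^{-d}$ together with the (Lorentz-refined) Hardy--Littlewood--Sobolev inequality gives, exactly as in (\ref{251251148}), $\|W_{1}T^{\bullet,1}_{\omega,\epsilon}W_{2}\|_{\mathfrak{S}^{2}}\lesssim\|W_{1}\|_{L^{2u,4}_{t}L^{2}_{z}}\|W_{2}\|_{L^{2u,4}_{t}L^{2}_{z}}$ with $\frac{2}{u}=2+2\re\omega-d$ and a constant at most exponential in $\im\omega$, uniformly in $\epsilon$. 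On the line $\re\omega=-1$, the Fourier transform in $t$, the uniform boundedness of $U_{\bullet}(t)$ on $L^{2}(M^{\circ})$, and the bound $\|\mathcal{F}[t^{\omega}\chi_{\{\epsilon<|t|<1\}}]\|_{\infty}\lesssim\langle\im\omega\rangle e^{\pi|\im\omega|}$ (uniform in $\epsilon$, as in Appendix~\ref{251252117}) yield $\|W_{1}T^{\bullet,1}_{\omega,\epsilon}W_{2}\|_{\mathfrak{S}^{\infty}}\lesssim\langle\im\omega\rangle e^{\pi|\im\omega|}\|W_{1}\|_{L^{\infty}_{t}L^{\infty}_{z}}\|W_{2}\|_{L^{\infty}_{t}L^{\infty}_{z}}$. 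Stein interpolation at $\omega=0$ then gives, for $\tilde{r}\in(d+1,d+2)$ (equivalently $r\in(\frac{2(d+2)}{d},\frac{2(d+1)}{d-1})$) and $\beta=\frac{2r}{r+2}$, the estimate (\ref{251251150}) with $T_{0,\epsilon}$ replaced by $T^{\bullet,1}_{0,\epsilon}$, uniformly in $\epsilon$; since $\tilde{q}\le 2\tilde{r}$ for $r\le\frac{2(d+1)}{d-1}$, this implies the asserted bound with the ordinary Lebesgue norm $L^{\tilde{q}}_{t}$ in place of $L^{\tilde{q},2\tilde{r}}_{t}$.

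To reach the whole range $r\in[2,\frac{2(d+1)}{d-1})$ I would interpolate this with the trivial endpoint $\|W_{1}T^{\bullet,1}_{0,\epsilon}W_{2}\|_{\mathfrak{S}^{\infty}}\lesssim\|W_{1}\|_{L^{2}_{t}L^{\infty}_{z}}\|W_{2}\|_{L^{2}_{t}L^{\infty}_{z}}$, which holds uniformly in $\epsilon$ because $U_{\bullet}(t)$ is uniformly bounded and $\chi_{\{\epsilon<|t-s|<1\}}$ has uniformly bounded $L^{1}$-mass in $s$; this is the same device used in the proof of Theorem~\ref{251241323} and in~\cite{H3}, and a short computation confirms that it preserves both the admissibility relation $\frac{2}{q}=d(\frac{1}{2}-\frac{1}{r})$ and $\beta=\frac{2r}{r+2}$. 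I do not expect a genuine obstacle in this lemma: the substantive hyperbolic analysis of the subsection lives in the companion region $|t-s|>1$ (Lemmas~\ref{25251030} and~\ref{25271434}), and the only step above that really needs to be checked is the first paragraph, i.e.\ that once the bounded spatial weights are absorbed and the a~priori weaker $|t-s|^{-3/2}$ decay of the low-energy part is compared with $|t-s|^{-d/2}$ on $(0,1)$, the short-range kernel bounds of~\cite{Che1} collapse to the same profile as the conic dispersive estimate (\ref{251241554}), so that the Euclidean/conic argument goes through verbatim.
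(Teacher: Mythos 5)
Your proposal is correct and follows essentially the same route as the paper: verify the short-time dispersive bound $|K_{\bullet}(t-s,z,z')|\lesssim|t-s|^{-d/2}$ for $|t-s|<1$ by absorbing the bounded spatial weights from the kernel estimates of \cite{Che1}, run the analytic interpolation from Proposition~\ref{251241557} on the family $T^{\bullet,1}_{\omega,\epsilon}$ to get the range $r\in(\frac{2(d+2)}{d},\frac{2(d+1)}{d-1})$, embed the Lorentz norm $L^{\tilde q,2\tilde r}_t$ into $L^{\tilde q}_t$ using $\tilde q\le 2\tilde r$, and then interpolate with the trivial $(q,r)=(\infty,2)$ endpoint. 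The only difference is that you spell out the kernel-absorption step and the invariance of the exponent relations under interpolation, which the paper states tersely; your bookkeeping there is accurate.
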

\begin{proof}
Since $\|U_{\bullet} (t) U_{\bullet} (s)^*\|_{1 \to \infty} \lesssim |t-s|^{-\frac{d}{2}}$ holds for $|t-s| <1$, we have
\begin{align*}
\|W_1T^{\bullet, 1} _{0, \epsilon} W_2 \|_{\mathfrak{S}^{\beta'}} \lesssim \|W_1\|_{L^{\tilde{q}, 2\tilde{r}} _t L^{\tilde{r}} _z} \|W_2\|_{L^{\tilde{q}, 2\tilde{r}} _t L^{\tilde{r}} _z} \lesssim \|W_1\|_{L^{\tilde{q}} _t L^{\tilde{r}} _z} \|W_2\|_{L^{\tilde{q}} _t L^{\tilde{r}} _z}
\end{align*}
if $\frac{2(d+2)}{d} <r< \frac{2(d+1)}{d-1}$ by an almost same argument as in the proof of Proposition \ref{251241557} with $T^{\bullet, 1} _{\omega, \epsilon}$. At $(q, r) = (\infty, 2)$,
\begin{align*}
\|W_1T^{\bullet, 1} _{0, \epsilon} W_2 \|_{\mathfrak{S}^{\infty}} \lesssim \|W_1\|_{L^{2} _t L^{\infty} _z} \|W_2\|_{L^{2} _t L^{\infty} _z} \|T^{\bullet, 1} _{0, \epsilon}\|_{L^1 _t L^2 _z \to L^{\infty} _t L^2 _z} \lesssim \|W_1\|_{L^{2} _t L^{\infty} _z} \|W_2\|_{L^{2} _t L^{\infty} _z}
\end{align*}
holds since we know
\begin{align*}
\|T^{\bullet, 1} _{0, \epsilon} F\|_{L^{\infty} _t L^2 _z} = \left\| \int_{\{\epsilon <|t-s| < 1\}} U_{\bullet} (t) U_{\bullet} (s)^* F(s) ds \right\|_{L^{\infty} _t L^2 _z} \lesssim \|F\|_{L^1 _t L^2 _z}.
\end{align*}
Then by the complex interpolation we have the desired estimates.
\end{proof}
Now by the complex interpolation between (\ref{252101605}) and $\|W_1T^{\bullet, 2} _{0, \epsilon} W_2 \|_{\mathfrak{S}^{\infty}} \lesssim \|W_1\|_{L^{2} _t L^{\infty} _z} \|W_2\|_{L^{2} _t L^{\infty} _z}$ we obtain (\ref{252101605}) for any $(q, r) \in [2, \infty]^2$ and $\beta \in [1, \infty]$ satisfying $\frac{2}{q} = d(\frac{1}{2} - \frac{1}{r})$, $r \in [2, \frac{2(d+1)}{d-1})$ and $\beta = \frac{2r}{r+2}$. Combining these estimates with Lemma \ref{2528955} we have
\begin{align*}
\|W_1T^{\bullet} _{\epsilon} W_2 \|_{\mathfrak{S}^{\beta'}} \lesssim \|W_1\|_{L^{\tilde{q}} _t L^{\tilde{r}} _z} \|W_2\|_{L^{\tilde{q}} _t L^{\tilde{r}} _z}
\end{align*}
for the same exponents $(q, r, \beta)$. Then by a limiting argument as in the proof of Proposition \ref{251241557} we obtain
\begin{align}
\|W_1T^{\bullet} W_2 \|_{\mathfrak{S}^{\beta'}} \lesssim \|W_1\|_{L^{\tilde{q}} _t L^{\tilde{r}} _z} \|W_2\|_{L^{\tilde{q}} _t L^{\tilde{r}} _z} \label{252101623}
\end{align}
for the same exponents $(q, r, \beta)$.
\begin{proof}[Proof of Theorem \ref{252101626}]
In the case (ii), (\ref{252101709}) follows from summing up (\ref{252101623}) with respect to $\bullet$. If $\frac{2}{q} = d(\frac{1}{2} - \frac{1}{r})$ and $\frac{1}{r} \le \frac{2}{q} - \frac{1}{2}$, the complex interpolation between (\ref{252101623}) and $\|W_1T^{\bullet} W_2\|_{\mathfrak{S}^{\infty}} \lesssim \|W_1\|_{L^{\widetilde{q_{end}}} _t L^{\widetilde{r_{end}}} _z} \|W_2\|_{L^{\widetilde{q_{end}}} _t L^{\widetilde{r_{end}}} _z}$ yields the desired estimates in (iii), where $(q_{end}, r_{end}) = (2, \frac{2d}{d-2})$. Next by interpolating the dual of (\ref{252101709}) at $(\frac{1}{q}, \frac{1}{r}) = \mathbb{P} (\frac{d}{2(d+1)}, \frac{d-1}{2(d+1)})$, which is already proved above, and $\|W_1T^{\bullet} W_2\|_{\mathfrak{S}^{\infty}} \lesssim \|W_1\|_{L^{\infty} _t L^{\tilde{r}} _z} \|W_2\|_{L^{\infty} _t L^{\tilde{r}} _z}$ at $(\frac{1}{q}, \frac{1}{r}) = (\frac{1}{2}, \frac{1}{r})$, $r \in (2, \frac{2d}{d-2}]$ we have the desired estimates for $(q, r)$ in (iii) satisfying $\frac{1}{r} < \frac{2}{q} - \frac{1}{2}$. On the line $\frac{1}{r} = \frac{2}{q} - \frac{1}{2}$, we interpolate the dual of (\ref{252101709}) at $\mathbb{P}$ and $\|W_1T^{\bullet} W_2 \|_{\mathfrak{S}^{\infty}} \lesssim \|W_1\|_{L^{\tilde{q}} _t L^{\tilde{r}} _z} \|W_2\|_{L^{\tilde{q}} _t L^{\tilde{r}} _z}$ at $(\frac{1}{q}, \frac{1}{r})$ on $\frac{1}{r} = \frac{2}{q} - \frac{1}{2}$, which is arbitrarily close to $(\frac{1}{2}, \frac{1}{2})$, to obtain the desired estimates in (iii). This completes the proof of the case (iii). Finally we consider the case (i). For any $(q_1, r_1)$ in (i), we take points $C$ and $D$ in the $(\frac{1}{q}, \frac{1}{r})$ plane such that $(\frac{1}{q_1}, \frac{1}{r_1})$, $C$ and $D$ are on a line $\ell$ parallel to $\frac{1}{r} = \frac{2}{q} - \frac{1}{2}$, $C$ is on $\frac{2}{q} = d(\frac{1}{2} - \frac{1}{r})$ and $D$ is on $\frac{1}{r} = \frac{1}{2}$. Then by interpolating the dual of (\ref{252101709}) at $C$ (the case (ii)) and $\|W_1T^{\bullet} W_2 \|_{\mathfrak{S}^{\infty}} \lesssim \|W_1\|_{L^{\tilde{q_2}} _t L^{\tilde{r_2}} _z} \|W_2\|_{L^{\tilde{q_2}} _t L^{\tilde{r_2}} _z}$ at $E(\frac{1}{q_2}, \frac{1}{r_2})$ which lies on $\ell$ and sufficiently close to $D$, we finish the proof of (i).
\end{proof}

\section{Semiclassical limit of orthonormal Strichartz estimates}\label{2412182324}
In this section we prove a quantum and classical correspondence for the Strichartz estimates. Since we do not use a boundary decomposition, we use coordinates $(x, \xi) \in T^* \R^d$ as usual. The reason we use the scattering calculus is that estimates on the compactness of operators are important. Before giving a proof, we consider a rough explanation why the orthonormal Strichartz estimates (smoothing estimates on the Heisenberg equation restricted to the diagonal) yield the Strichartz estimates for the transport equation (without any restriction). We focus on (\ref{253171257}) and (\ref{253171256}) in {\S}\ref{25371356}. Suppose the initial data is $\gamma _{0} = a^w (x, hD_x)$ with $a \in S^{-0, -0}$ (any operator in $\mathfrak{S}^{\beta}$ can be approximated by such $\Psi$DOs in $\mathfrak{S}^{\beta}$ so we assume that from the beggining). Then, by an Egorov-type theorem, $\gamma (t) \sim b^w _{t} (x, hD_x) =: B_t$ with $b_{t} (x, \xi) = a \circ e^{-tH_p} (x, \xi)$ and $p(x, \xi) = g^{ij} (x) \xi_{i} \xi_{j}$. Since the Strichartz estimates capture smoothing effects in $L^p$ space, the relevant part of $\gamma (t, x, x')$ is the most singular part. However we know
\begin{align*}
\WF (K_{B_t}) \subset N^* \diag \setminus 0 = \{(x, \xi, x, -\xi) \mid \xi \neq 0\},
\end{align*}
where $K_{B_t}$ is the Schwartz kernel of $B_t$. Hence the Strichartz estimates for $\gamma (t, x, x)$ (restricted part to the diagonal) should have same amount of information as those for $\gamma (t, x, x')$ (without restriction). Therefore the orthonormal Strichartz estimates (estimates for $\gamma (t, x, x)$) are sufficient to deduce the Strichartz estimates for the transport equation (estimates for $b_t (x, \xi)$). Now we give a rigorous proof. First we construct a parametrix.
\begin{lemma}\label{2412231610}
Let $f \in C^{\infty} _{0} (T^* \R^d)$ and $T>0$. Then, for any $N \in \N$, there exist $F_N (t) = \sum_{j=0}^{N} \psi_{j} (t) \in S^{0, 0}$ and $\psi_{j} (t) \in h^j S^{0, 0}$ such that $F_N (0) = f^w (x, hD_x)$ and
\begin{align*}
\frac{\partial}{\partial t} F^w _N (t, x, hD_x) + \frac{i}{h} [p^w (x, hD_x), F^w _N (t, x, hD_x)] \in h^N \Op S^{-N, -N}
\end{align*}
uniformly in $t \in [-T, T]$.
\end{lemma}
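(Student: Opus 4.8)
The plan is to carry out the standard WKB/Egorov construction: expand the symbol of $F_N$ in powers of $h$, solve the resulting transport equations by transporting the principal symbol along the Hamilton flow of $p$, and check that what is left over lands in $h^N\Op S^{-N,-N}$.

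\emph{Hierarchy of transport equations.} By the semiclassical symbol calculus (recalled in the introduction; see also \cite{Zw}), for $a\in S^{k,l}$ and any $K\ge 1$ one has
\[
\tfrac{i}{h}\bigl[p^w(x,hD_x),a^w(x,hD_x)\bigr]=\sum_{j=0}^{K-1}h^{2j}\bigl(M_j a\bigr)^w(x,hD_x)+h^{2K}\Op S^{k+2-2K,\,l-2K},
\]
where $M_0 a=\{p,a\}=H_p a$ and, for $j\ge 1$, $M_j$ is a differential operator whose coefficients are derivatives of $p$; only even powers of $h$ occur because the Moyal commutator is an odd function of the symplectic form. Inserting the ansatz $F_N(t)=\sum_{j=0}^{N}\psi_j(t)$, $\psi_j(t)\in h^jS^{0,0}$, into $\partial_t F_N^w+\tfrac{i}{h}[p^w,F_N^w]$ and forcing the coefficient of $h^m$ to vanish for $0\le m\le N$, one gets $\psi_j\equiv 0$ for odd $j$, while $\psi_{2m}$ must solve
\[
\partial_t\psi_{2m}+H_p\psi_{2m}=-\sum_{k=1}^{m}M_k\psi_{2(m-k)},\qquad \psi_0(0)=f,\quad \psi_{2m}(0)=0\ (m\ge 1).
\]

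\emph{Solving along the flow.} Completeness of $H_p$ provides a global smooth flow $e^{tH_p}$, and the equations are solved by Duhamel: $\psi_0(t)=f\circ e^{-tH_p}$ and, recursively, $\psi_{2m}(t)=-\int_0^t\bigl(\sum_{k=1}^m M_k\psi_{2(m-k)}\bigr)(s)\circ e^{-(t-s)H_p}\,ds$. The decisive observation is that, by induction on $m$ together with the group property of the flow, $\supp\psi_{2m}(t)\subset e^{tH_p}(\supp f)$ for all $m$; hence every $\psi_j(t)$ is supported in the fixed compact set $K_T:=\bigcup_{|t|\le T}e^{tH_p}(\supp f)$, which is compact, being the image of $[-T,T]\times\supp f$ under the smooth flow map. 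Consequently each $\psi_j(t)$ lies in $C^\infty_0(T^*\R^d)\subset h^jS^{0,0}$, in fact in $h^jS^{-N,-N}$ for every $N$, with all seminorms bounded uniformly in $t\in[-T,T]$; and $F_N(0)=\psi_0(0)=f$, i.e.\ $F_N^w(0,x,hD_x)=f^w(x,hD_x)$.

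\emph{The remainder.} Choosing $K$ with $2K\ge N$, and using that $F_N\in C^\infty_0$ belongs to every symbol class, the commutator expansion gives $\partial_t F_N^w+\tfrac{i}{h}[p^w,F_N^w]=\bigl(\partial_t F_N+\sum_{j=0}^{K-1}h^{2j}M_j F_N\bigr)^w+R$ with $R\in h^{2K}\Op S^{-N,-N}\subset h^N\Op S^{-N,-N}$. Expanding the remaining symbol in powers of $h$ and using the transport equations just solved, the contributions of orders $h^0,\dots,h^N$ cancel, and every surviving term is $h^m$ with $m\ge N+1$ times a $\Psi$DO whose symbol is a (finite-order, local) differential operator applied to some $\psi_j$ — hence, being supported in $K_T$, an element of $C^\infty_0\subset S^{-N,-N}$. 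Thus the whole expression is in $h^N\Op S^{-N,-N}$, uniformly for $t\in[-T,T]$. \textbf{The main difficulty} is purely organizational: one must propagate the two scattering orders of $S^{k,l}$ through the commutator expansion — the quadratic growth of $p$ in $\xi$ is harmless once it multiplies the compactly supported $\psi_j$, but this has to be stated — and one must verify that all the estimates, both the seminorms of $F_N$ and the size of the remainder, are uniform over the compact time interval $[-T,T]$; the flow-invariant compact set $K_T$ is exactly what makes that uniformity automatic.
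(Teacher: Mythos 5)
Your proof is correct and follows essentially the same Egorov/WKB iteration as the paper. The paper solves the transport hierarchy one level at a time (defining $r_j$, $r'_j$, then $\psi_{j+1}$ with source $-r_j$), whereas you write out the full family of transport equations at once and additionally observe that, for Weyl quantization, $\frac{i}{h}[p^w,a^w]$ is an even power series in $h$, so the odd-indexed $\psi_j$ vanish; the paper does not exploit this parity but the conclusion is unaffected. Both arguments hinge on exactly the same structural point you flag at the end: the $\psi_j(t)$ are supported in the fixed compact flow-out $K_T=\bigcup_{|t|\le T}e^{tH_p}(\supp f)$ (proven by your induction through the Duhamel formula, or in the paper by noting $\supp r_j(t)\subset\supp\psi_0(t)$), which places every symbol in $S^{-\infty,-\infty}$ uniformly on $[-T,T]$ and makes the scattering order bookkeeping and the $h^N\Op S^{-N,-N}$ remainder estimate immediate. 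One minor remark: your displayed symbol class $h^{2K}\Op S^{k+2-2K,\,l-2K}$ for the commutator remainder is not quite what the paper's asymptotic $[a^w,b^w]=\frac{h}{i}\{a,b\}^w$ mod $h^3\Op S^{k+k'-3,l+l'-3}$ would give, but since everything is applied to compactly supported symbols this discrepancy is harmless.
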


\begin{proof}
The first approximation is given by
\begin{align*}
\frac{\partial}{\partial t} \psi _0 (t, x, \xi) + H_p \psi_0 (t, x, \xi) =0, \quad \psi_0 (0) =f.
\end{align*}
Hence $\psi_0$ is given by $\psi_0 (t) = f \circ e^{-tH_p} \in S^{0, 0}$. We note that, since $|t| \le T$, actually $\psi_0 (t)$ is compactly supported in $T^* \R^d$. Then there exist $r_0 (t) \in hS^{0, 0}$ and $r' _0 (t) \in h^N S^{-N, -N}$ such that
\begin{align*}
\frac{\partial}{\partial t} \psi^{w} _0  (t, x, hD_x) + \frac{i}{h} [p^w (x, hD_x), \psi^{w} _0  (t, x, hD_x)] = r^{w} _0 (t, x, hD_x) + r'^{w} _0 (t, x, hD_x)
\end{align*}
and $\supp r_0 (t) \subset \supp \psi_0 (t)$. Next we consider the inhomogeneous transport equation:
\begin{align*}
\frac{\partial}{\partial t} \psi _1 (t, x, \xi) + H_p \psi_1 (t, x, \xi) =-r_0 (t, x, \xi), \quad \psi_1 (0) =0.
\end{align*}
The unique solution is given by
\begin{align*}
\psi_1 (t, x, \xi) = -\int_{0}^{t} r_0 (s) \circ e^{-(t-s)H_p} (x, \xi) ds
\end{align*}
and this satisfies $\supp \psi_1 (t) \subset \supp \psi_0 (t)$ by a support property of $r_0 (t)$. In particular $\psi_1 (t) \in h S^{0, 0}$ holds. Hence we have
\begin{align*}
\frac{\partial}{\partial t} \psi^{w} _1  (t, x, hD_x) + \frac{i}{h} [p^w (x, hD_x), \psi^{w} _1  (t, x, hD_x)] = -r^{w} _0 (t, x, hD_x) + r^{w} _1 (t, x, hD_x)+ r'^{w} _1 (t, x, hD_x)
\end{align*}
for some $r_1 (t) \in h^2 S^{0, 0}$ with $\supp r_1 (t) \subset \supp \psi_0 (t)$ and $r' _1 (t) \in h^N S^{-N, -N}$. By iterating this procedure, we obtain $\psi_j (t) \in h^j S^{0, 0}$ such that $\supp \psi_{j} (t) \subset \supp \psi_0 (t)$ and $F_N (t) = \sum_{j=0}^{N} \psi_{j} (t) \in S^{0, 0}$ satisfying
\begin{align*}
\frac{\partial}{\partial t} F^w _N (t, x, hD_x) + \frac{i}{h} [p^w (x, hD_x), F^w _N (t, x, hD_x)] &= \sum_{j=0}^{N} r'^w _j (t, x, hD_x) + r_N (t, x, hD_x) \\
& \in h^N \Op S^{-N, -N}.
\end{align*}
\end{proof}
\begin{remark}\label{253291424}
Usually Egorov-type theorems are effective up to the Ehrenfest time. However they are sufficient to prove a pointwise convergence as in the following proof.
\end{remark}
\begin{proof}[Proof of Theorem \ref{2412231555}]
By a density argument, we assume $f \in C^{\infty} _{0} (T^* \R^d)$.

Step 1. Let $|t| \le T$ for some fixed $T>0$. Then by Lemma \ref{2412231610} we have
\begin{align*}
&\frac{d}{dt} \{e^{i \frac{t}{h} p^w (x, hD_x)} F^w _N (t, x, hD_x) e^{-i \frac{t}{h} p^w (x, hD_x)} \} \\
&= e^{i \frac{t}{h} p^w (x, hD_x)} \left\{ \frac{\partial}{\partial t} F^w _N (t, x, hD_x) + \frac{i}{h} [p^w (x, hD_x), F^w _N (t, x, hD_x)] \right\} e^{-i \frac{t}{h} p^w (x, hD_x)} \\
& =: e^{i \frac{t}{h} p^w (x, hD_x)} R_N (t) e^{-i \frac{t}{h} p^w (x, hD_x)}.
\end{align*}
Since $F^w _N (0) (x, hD_x) = f^w (x, hD_x)$, we have
\begin{align*}
e^{-i \frac{t}{h} p^w (x, hD_x)} f^w (x, hD_x) e^{i \frac{t}{h} p^w (x, hD_x)} &= F^w _N (t, x, hD_x) - \int_{0}^{t} e^{-i \frac{t-s}{h} p^w (x, hD_x)} R_N e^{i \frac{t-s}{h} p^w (x, hD_x)} ds \\
& =: F^w _N (t, x, hD_x) + \frak{R}_N (t)
\end{align*}
if $|t| \le T$. First we consider $\rho [F^w _N (t, x, hD_x)] = \rho [\psi^w _0 (t, x, hD_x)] + \rho [F^w _N (t, x, hD_x) - \psi^w _0 (t, x, hD_x)]$. For the first term we have
\begin{align*}
\rho [\psi^w _0 (t, x, hD_x)] (x) = \frac{1}{(2\pi h)^d} \int_{\R^d} f \circ e^{-tH_p} (x, \xi) d\xi 
\end{align*}
since $f \circ e^{-tH_p} \in C^{\infty} _0 (T^* \R^d)$. For the second term, 
\begin{align*}
\rho [F^w _N (t, x, hD_x) - \psi^w _0 (t, x, hD_x)] (x) = \frac{1}{(2\pi h)^d} \int_{\R^d} \sum_{j=1}^{N} \psi_j (t, x, \xi) d\xi,
\end{align*}
$\supp \psi_{j} (t) \subset \supp \psi _0 (t)$ and $\psi_{j} (t) \in h^j S^{0, 0}$ yields
\begin{align*}
(2\pi h)^d \rho [F^w _N (t, x, hD_x) - \psi^w _0 (t, x, hD_x)] (x) \to 0
\end{align*}
for $a. e. x \in \R^d$ as $h \to 0$. Hence 
\begin{align*}
(2\pi h)^d \rho [F^w _N (t, x, hD_x)] (x) \to  \int_{\R^d} f \circ e^{-tH_p} (x, \xi) d\xi 
\end{align*}
for $a. e. x \in \R^d$. Now we estimate the remainder term. Let $g \in L^{\infty} _{c} (\{ |x| \le R \})$ for large $R>0$. Then
\begin{align*}
\left| \int_{\R^d} \rho [\frak{R}_N (t)] (x) g(x) dx \right| &= \left| \tr \left[\int_{0}^{t} e^{-i \frac{t-s}{h} p^w (x, hD_x)} R_N (s) e^{i \frac{t-s}{h} p^w (x, hD_x)} ds g \right]\right| \\
& = \left| \int_{0}^{t} \tr \left[ R_N (s)  e^{i \frac{t-s}{h} p^w (x, hD_x)} g e^{-i \frac{t-s}{h} p^w (x, hD_x)}  \right] ds \right| \\
& \lesssim \int_{0}^{t} \|R_N (s)\|_{\mathfrak{S}^1} ds \|g\|_{L^{\infty} (\{ |x| \le R \})} \\
& = \mathcal{O} (h^{N-d}) \|g\|_{L^{\infty} (\{ |x| \le R \})}
\end{align*}
holds if $N$ is sufficiently large. Hence we have
\begin{align*}
\|  \rho [\frak{R}_N (t)] \|_{L^1 (\{ |x| \le R \})} \lesssim \mathcal{O} (h^{N-d}).
\end{align*}
By taking a subsequence, for any $t \in [-T, T]$, $(2\pi h_n)^d \rho [\frak{R}_N (t)]_{h=h_n} (x) \to 0$ for $a.e. x \in \{ |x| \le R \}$. Then, by using $\|  \rho [\frak{R}_N (t)]_{h=h_n} \|_{L^1 (\{ |x| \le 2R \})} \lesssim \mathcal{O} (h^{N-d} _n)$ again, we have a subsequence of $\{h_n\}$ (which we also denote by $\{h_n\}$) such that for any $t \in [-T, T]$, $(2\pi h_n)^d \rho [\frak{R}_N (t)]_{h=h_n} (x) \to 0$ for $a.e. x \in \{ |x| \le 2R \}$. By iterating this procedure and using a diagonal argument, we have a subsequence $\{h_n\}$ such that for any $t \in [-T, T]$, $(2\pi h_n)^d \rho [\frak{R}_N (t)]_{h=h_n} (x) \to 0$ for $a.e. x \in \R^d$. Therefore
\begin{align*}
(2\pi h_n)^d \rho [e^{-i \frac{t}{h_n} p^w (x, h_n D_x)} f^w (x, h_n D_x) e^{i \frac{t}{h_n} p^w (x, h_n D_x)}] (x) \to  \int_{\R^d} f \circ e^{-tH_p} (x, \xi) d\xi  
\end{align*}
for $a.e. (t, x) \in [-T, T] \times \R^d$. By iterating this procedure using $2T$ instead of $T$ and starting with $h_n$ instead of $h$, we have a subsequence of $\{h_n \}$ (again which we denote by $\{h_n\}$) such that
\begin{align*}
(2\pi h_n)^d \rho [e^{-i \frac{t}{h_n} p^w (x, h_n D_x)} f^w (x, h_n D_x) e^{i \frac{t}{h_n} p^w (x, h_n D_x)}] (x) \to  \int_{\R^d} f \circ e^{-tH_p} (x, \xi) d\xi  
\end{align*}
for $a.e. (t, x) \in [-2T, 2T] \times \R^d$. Then, by repeating this construction and using a diagonal argument as before, we have a subsequence $\{h_n\}$ such that
\begin{align}
(2\pi h_n)^d \rho [e^{-i \frac{t}{h_n} p^w (x, h_n D_x)} f^w (x, h_n D_x) e^{i \frac{t}{h_n} p^w (x, h_n D_x)}] (x) \to  \int_{\R^d} f \circ e^{-tH_p} (x, \xi) d\xi \label{2412232203}
\end{align}
for $a.e. (t, x) \in \R \times \R^d$.

Step 2. By the orthonormal Strichartz estimates and the homogeneity of $p$, we have
\begin{align*}
&\| (2\pi h_n)^d \rho [e^{-i \frac{t}{h_n} p^w (x, h_n D_x)} f^w (x, h_n D_x) e^{i \frac{t}{h_n} p^w (x, h_n D_x)}] \|_{L^{\frac{q}{2}} _t  L^{\frac{r}{2}} _x} \\
& \lesssim h^{d-\frac{2}{q}} _n \| \rho [e^{-i t p^w (x, D_x)} f^w (x, h_n D_x) e^{i t p^w (x, D_x)}] \|_{L^{\frac{q}{2}} _t  L^{\frac{r}{2}} _x} \\
& \lesssim h^{d-\frac{2}{q}} _n \| f^w (x, h_n D_x) \|_{\mathfrak{S}^{\beta}} \\
& \lesssim h^{d- \frac{2}{q} - \frac{d}{\beta}} _n \left(\|f\|_{L^{\beta} _{x, \xi}} + \mathcal{O} (h^{\frac{1}{2}} _n) \right) = \|f\|_{L^{\beta} _{x, \xi}} + \mathcal{O} (h^{\frac{1}{2}} _n)
\end{align*}
since $d- \frac{2}{q} - \frac{d}{\beta} =0$. Here we have used $\|a^w (x, hD_x)\|_{\mathfrak{S}^{\beta}} \lesssim \|a\|_{L^{\beta} _{x, \xi}} + \mathcal{O} (h^{\frac{1}{2}})$ (see \cite{BHLNS} (5.8)). If $q < \infty$, by (\ref{2412232203}) and Fatou's lemma, we obtain
\begin{align*}
&\left\| \int_{\R^d} f \circ e^{-tH_p} (x, \xi) d\xi \right\|_{L^{\frac{q}{2}} _t  L^{\frac{r}{2}} _x} \\
& \le \liminf _{n \to \infty} \| (2\pi h_n)^d \rho [e^{-i \frac{t}{h_n} p^w (x, h_n D_x)} f^w (x, h_n D_x) e^{i \frac{t}{h_n} p^w (x, h_n D_x)}] \|_{L^{\frac{q}{2}} _t  L^{\frac{r}{2}} _x} \\
& \lesssim  \|f\|_{L^{\beta} _{x, \xi}}.
\end{align*}
If $q = \infty$, by the Liouville theorem, 
\begin{align*}
\left\| \int_{\R^d} f \circ e^{-tH_p} (x, \xi) d\xi \right\|_{L^{\infty} _t L^1 _x} \le \sup _{t \in \R} \int_{T^* \R^d} |f| \circ e^{-tH_p} (x, \xi) dxd\xi = \|f\|_{L^1 _{x, \xi}}.
\end{align*}
This completes the proof of Theorem \ref{2412231555}.
\end{proof}
Now we show the homogeneous and inhomogeneous Strichartz estimates for the transport equation. Theorem \ref{253201908} follows from Corollary \ref{2412232213} and \ref{2412232345}.
\begin{corollary}\label{2412232213}
Let $p \in S^{2, 0}$ be as in Theorem \ref{2412231555}. Suppose (\ref{2412231605}) holds for any $(q, r, \beta)$ satisfying $q, r \in [2, \infty], r \in [2, \frac{2(d+1)}{d-1}), \frac{2}{q} = d(\frac{1}{2} - \frac{1}{r})$ and $\beta = \frac{2r}{r+2}$. Then
\begin{align}
\|f \circ e^{-tH_p}\|_{L^q _t L^r _x L^p _{\xi}} \lesssim \|f\|_{L^a _{x, \xi}}\label{2412232304}
\end{align}
holds for any non-endpoint KT-admissible quadruplet $(q, r, p, a)$.
\end{corollary}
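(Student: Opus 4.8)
The plan is to obtain (\ref{2412232304}) directly from the velocity average estimate (\ref{2412231609}), which Theorem~\ref{2412231555} makes available, under the hypothesis of the corollary, for every triple $(q_0,r_0,\beta_0)$ with $r_0\in[2,\tfrac{2(d+1)}{d-1})$, $\tfrac{2}{q_0}=d(\tfrac12-\tfrac1{r_0})$ and $\beta_0=\tfrac{2r_0}{r_0+2}$. (Note that $r_0<\tfrac{2(d+1)}{d-1}$ automatically forces $q_0>\tfrac{2(d+1)}{d}\ge 2$, so such a triple is admissible for Theorem~\ref{2412231555}.) The key point is that no interpolation is needed: a single non-endpoint KT-admissible quadruplet is, after rescaling all of its exponents by $p$, exactly one of these triples.

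First I would dispose of the degenerate quadruplet $p=\infty$. Then $a=\hm(p,r)=\infty$ forces $r=\infty$, and $\tfrac1q=\tfrac d2(\tfrac1p-\tfrac1r)=0$ forces $q=\infty$, so (\ref{2412232304}) reduces to $\|f\circ e^{-tH_p}\|_{L^\infty_{t,z,\zeta}}=\|f\|_{L^\infty_{z,\zeta}}$, which holds because each $e^{-tH_p}$ is a measure-preserving diffeomorphism of $T^*\R^d$. Assuming now $1\le p<\infty$, I would set $\theta:=p$, $r_0:=2r/p$, $q_0:=2q/p$, $\beta_0:=a/p$. A short computation shows that the three defining constraints of a KT-admissible quadruplet, namely $a=\hm(p,r)$, $\tfrac1q=\tfrac d2(\tfrac1p-\tfrac1r)$, and $p_{*}(a)\le p\le a\le r\le r_{*}(a)$, become respectively $\beta_0=\tfrac{2r_0}{r_0+2}$, $\tfrac{2}{q_0}=d(\tfrac12-\tfrac1{r_0})$, and $2\le r_0\le\tfrac{2(d+1)}{d-1}$, the strict inequality $r_0<\tfrac{2(d+1)}{d-1}$ corresponding precisely to the non-endpoint hypothesis. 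I would then apply (\ref{2412231609}) with the triple $(q_0,r_0,\beta_0)$ to the function $|f|^{\theta}$, which lies in $L^{\beta_0}_{z,\zeta}$ exactly because $f\in L^{a}_{z,\zeta}$ and $\theta\beta_0=a$. Combining this with the identities
\[
\int_{\R^d}|f|^{\theta}\circ e^{-tH_p}(z,\zeta)\,d\zeta=\big\|(f\circ e^{-tH_p})(t,z,\cdot)\big\|_{L^{\theta}_{\zeta}}^{\theta},\qquad
\big\|\,|h|^{\theta}\,\big\|_{L^{q_0/2}_{t}L^{r_0/2}_{z}}=\|h\|_{L^{\theta q_0/2}_{t}L^{\theta r_0/2}_{z}}^{\theta}
\]
turns (\ref{2412231609}) into $\|f\circ e^{-tH_p}\|_{L^{\theta q_0/2}_{t}L^{\theta r_0/2}_{z}L^{\theta}_{\zeta}}\lesssim\|f\|_{L^{\theta\beta_0}_{z,\zeta}}$, and since $\theta q_0/2=q$, $\theta r_0/2=r$, $\theta=p$, $\theta\beta_0=a$, this is exactly (\ref{2412232304}).

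The main point needing care, and essentially the only content of the argument, is the exponent bookkeeping in the previous paragraph: one must check that $(p,r)\mapsto(\theta,r_0)=(p,2r/p)$ maps the non-endpoint KT-admissible region onto $[1,\infty)\times[2,\tfrac{2(d+1)}{d-1})$. Equivalently, in $(\tfrac1p,\tfrac1r)$-coordinates this region is exactly the wedge $\tfrac{d-1}{d+1}\tfrac1p<\tfrac1r\le\tfrac1p\le1$ (together with the already-treated point $p=r=\infty$). The only step requiring a genuine argument here is that no KT-admissible quadruplet satisfies $\tfrac1r<\tfrac{d-1}{d+1}\tfrac1p$: such a point would satisfy $\tfrac1p+\tfrac1r<\tfrac{2d}{d+1}\tfrac1p\le\tfrac{2d}{d+1}$, hence $a>\tfrac{d+1}{d}$, and then the constraint $r\le r_{*}(a)=\tfrac{da}{d-1}$ would force $\tfrac1r\ge\tfrac{d-1}{d+1}\tfrac1p$, a contradiction; moreover on the boundary line $\tfrac1r=\tfrac{d-1}{d+1}\tfrac1p$ (with $p<\infty$) one computes $(q,r,p)=(a,r_{*}(a),p_{*}(a))$ with $a\in[\tfrac{d+1}{d},\infty)$, i.e.\ exactly an excluded endpoint. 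Hence (\ref{2412231609}) by itself already yields (\ref{2412232304}) on the full non-endpoint range.
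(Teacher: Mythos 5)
Your proof is correct and is essentially the paper's argument: after disposing of the degenerate case ($p=\infty$ for you; $a=\infty$, $q=\infty$, $p=\infty$ or $r=\infty$ in the paper), both substitute $|f|^p$ for $f$ in the velocity average estimate~(\ref{2412231609}), which is equivalent to applying it at the rescaled triple $(q_0,r_0,\beta_0)=(2q/p,2r/p,a/p)$. The only difference is organizational — the paper treats $q=\infty$ as a separate Liouville-theorem case, whereas you fold it into the general substitution (it lands on $(q_0,r_0)=(\infty,2)$, which is covered) — and you take the extra care of verifying explicitly that non-endpoint KT-admissibility is exactly what puts $r_0$ in the range $[2,\tfrac{2(d+1)}{d-1})$, a step the paper leaves implicit.
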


\begin{proof}
Let $(q, r, p, a)$ be a non-endpoint KT-admissible quadruplet. If $a = \infty$, then $q=r=p=\infty$ by the KT-admissible condition. Hence (\ref{2412232304}) is obvious in this case. If $q = \infty$, then $p=r=a$ and (\ref{2412232304}) follows from the Liouville theorem. If $p = \infty$ or $r = \infty$, then $a= \infty$ by the KT-admissible condition and contained in the first case. In the other cases, all of $q, r, p, a$ are finite and (\ref{2412232304}) is equivalent to
\begin{align*}
\left\| \int_{\R^d} f \circ e^{-tH_p} (x, \xi) d\xi \right\|_{L^{\frac{q}{p}} _t  L^{\frac{r}{p}} _x} \lesssim \|f\|_{L^{\frac{a}{p}} _{x, \xi}}.
\end{align*}
This estimate follows from Theorem \ref{2412231555}.
\end{proof}

\begin{corollary}\label{2412232345}
We assume the same conditions as in Corollary \ref{2412232213}. If $(q, r, p, a)$ and $(\tilde{q}, \tilde{r}, \tilde{p}, a')$ are non-endpoint KT-admissible quadruplets, then
\begin{align}
\left\| \int_{0}^{t} F(s) \circ e^{-(t-s)H_p} (x, \xi) ds \right\|_{L^q _t L^r _x L^p _{\xi}} \lesssim \|F\|_{L^{\tilde{q}'} _t L^{\tilde{r}'} _x L^{\tilde{p}'} _{\xi}} \label{2412232355}
\end{align}
holds.
\end{corollary}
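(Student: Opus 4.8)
The plan is to deduce the inhomogeneous estimate (\ref{2412232355}) from the homogeneous estimate of Corollary \ref{2412232213} by a $TT^{*}$-type argument followed by the Christ--Kiselev lemma, in analogy with the standard passage from homogeneous to retarded Strichartz estimates. Write $\Phi_{t}:=e^{-tH_{p}}$; by the Liouville theorem each $\Phi_{t}$ is a measure-preserving diffeomorphism of $T^{*}\R^{d}$, and the group law gives $\Phi_{t-s}=\Phi_{-s}\circ\Phi_{t}$. As the notation indicates, $a'$ denotes the H\"older conjugate of $a$, so that $(\tilde q,\tilde r,\tilde p,a')$ being a KT-admissible quadruplet forces $\hm(\tilde p,\tilde r)=a'$. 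By a density argument I may take $F$ simple and compactly supported in $(t,x,\xi)$, and the degenerate case $q=\infty$ (where necessarily $p=r=a$) follows at once from measure preservation and Minkowski's integral inequality, so I assume $q<\infty$.

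The first step is the \emph{untruncated} inequality
\begin{align*}
\left\|\int_{\R}F(s)\circ\Phi_{t-s}(x,\xi)\,ds\right\|_{L^{q}_{t}L^{r}_{x}L^{p}_{\xi}}\lesssim\|F\|_{L^{\tilde q'}_{t}L^{\tilde r'}_{x}L^{\tilde p'}_{\xi}}.
\end{align*}
Using $F(s)\circ\Phi_{t-s}=(F(s)\circ\Phi_{-s})\circ\Phi_{t}$ and pulling $\Phi_{t}$ outside the $s$-integral, the left-hand side equals $\|(\mathcal{B}F)\circ\Phi_{t}\|_{L^{q}_{t}L^{r}_{x}L^{p}_{\xi}}$ with $\mathcal{B}F:=\int_{\R}F(s)\circ\Phi_{-s}\,ds$, and Corollary \ref{2412232213} for the quadruplet $(q,r,p,a)$ bounds this by $\|\mathcal{B}F\|_{L^{a}_{x,\xi}}$. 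To estimate the latter, pair $\mathcal{B}F$ against $g$ with $\|g\|_{L^{a'}_{x,\xi}}\le1$: by measure preservation of $\Phi_{-s}$ one has $\langle\mathcal{B}F,g\rangle=\int_{\R}\langle F(s),g\circ\Phi_{s}\rangle_{L^{2}_{x,\xi}}\,ds$, whence H\"older in $(s,x,\xi)$ and then Corollary \ref{2412232213} for the quadruplet $(\tilde q,\tilde r,\tilde p,a')$ give
\begin{align*}
|\langle\mathcal{B}F,g\rangle|\le\|F\|_{L^{\tilde q'}_{t}L^{\tilde r'}_{x}L^{\tilde p'}_{\xi}}\,\|g\circ\Phi_{s}\|_{L^{\tilde q}_{s}L^{\tilde r}_{x}L^{\tilde p}_{\xi}}\lesssim\|F\|_{L^{\tilde q'}_{t}L^{\tilde r'}_{x}L^{\tilde p'}_{\xi}}\,\|g\|_{L^{a'}_{x,\xi}}.
\end{align*}
Taking the supremum over $g$ yields the untruncated estimate.

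The second step upgrades this to the retarded operator $F\mapsto\int_{0}^{t}F(s)\circ\Phi_{t-s}\,ds$ by the Christ--Kiselev lemma: this is a convolution-type operator in $t$ whose untruncated version is bounded from $L^{\tilde q'}_{t}(L^{\tilde r'}_{x}L^{\tilde p'}_{\xi})$ to $L^{q}_{t}(L^{r}_{x}L^{p}_{\xi})$ by the first step, so its lower-triangular truncation is bounded between the same spaces as soon as $\tilde q'<q$. This strict inequality is exactly where non-endpointness enters: for a non-endpoint KT-admissible quadruplet one has $q>a$ strictly --- the equality $\tfrac{1}{q}=\tfrac{1}{a}$ would force $(p,r)=(p_{*}(a),r_{*}(a))$ and hence land on the excluded endpoint --- and likewise $\tilde q>a'$, so that $\tfrac{1}{q}+\tfrac{1}{\tilde q}<\tfrac{1}{a}+\tfrac{1}{a'}=1$, i.e. $\tilde q'<q$. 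Applying Christ--Kiselev on $\{t>0\}$ and on $\{t<0\}$ separately gives (\ref{2412232355}).

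The main obstacle is the $TT^{*}$ step, which is the only point where the scheme really departs from Keel--Tao for the Schr\"odinger equation: there $L^{2}$ serves as the energy space for every admissible pair, so the two pairs may be chosen independently, whereas here its role is played by the variable space $L^{a}_{x,\xi}$, and the pairing $\langle\mathcal{B}F,g\rangle$ can be absorbed by Corollary \ref{2412232213} only when the second quadruplet has energy exponent equal to the H\"older conjugate $a'$ of the first's --- precisely the compatibility encoded in the hypothesis. Everything else --- measure preservation of the Hamilton flow, the Christ--Kiselev lemma, and the elementary comparison $q>a$ --- is routine once Corollary \ref{2412232213} is in hand.
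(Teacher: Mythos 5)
Your proof is correct in substance but takes a longer route than the paper's. You build the untruncated ($s\in\R$) estimate by the $TT^{*}$ argument with the two dual homogeneous estimates --- this part coincides with the paper --- and then invoke the Christ--Kiselev lemma to pass to the retarded operator, which forces you to verify $\tilde{q}'<q$ and hence $q>a$ strictly for non-endpoint KT-admissible quadruplets. The paper bypasses Christ--Kiselev entirely by exploiting positivity: composition with $e^{-(t-s)H_{p}}$ is a positivity-preserving operation, so
\begin{align*}
\left| \int_{0}^{t} F(s)\circ e^{-(t-s)H_{p}}\,ds \right| \le \int_{\R} |F(s)|\circ e^{-(t-s)H_{p}}\,ds = TT^{*}|F|,
\end{align*}
and monotonicity of the Lebesgue norms in $L^{q}_{t}L^{r}_{x}L^{p}_{\xi}$ immediately reduces the retarded estimate to the untruncated one applied to $|F|$. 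This is the key structural difference between transport and dispersive propagators: there is no cancellation in the $s$-integral to lose, so the full-line integral costs nothing. As a side point, your disposal of the degenerate case $q=\infty$ (so $p=r=a$) via ``measure preservation and Minkowski'' is imprecise: Minkowski only yields a bound by $\|F\|_{L^{1}_{t}L^{a}_{x,\xi}}$, which matches $\|F\|_{L^{\tilde{q}'}_{t}L^{\tilde{r}'}_{x}L^{\tilde{p}'}_{\xi}}$ only when $\tilde{q}'=1$; the correct argument is to pull $e^{-tH_{p}}$ outside by measure preservation and apply the $T^{*}$ bound (the dual of Corollary \ref{2412232213} for $(\tilde{q},\tilde{r},\tilde{p},a')$) to $F\chi_{[0,t]}$. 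This is easily repaired, and in the paper's approach the case $q=\infty$ needs no separate treatment.
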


\begin{proof}
The argument is almost the same as in the case of the free Hamiltonian (see \cite{O1} ). For $T: L^{a'} _{x, \xi} \to {L^{\tilde{q}} _t L^{\tilde{r}} _x L^{\tilde{p}} _{\xi}}$, $ f \mapsto f \circ e^{-tH_p}$, its formal adjoint is given by
\begin{align*}
T^* F(x, \xi) = \int_{\R} F(s) \circ e^{sH_p} (x, \xi) ds,
\end{align*}
$T^* : {L^{\tilde{q}'} _t L^{\tilde{r}'} _x L^{\tilde{p}'} _{\xi}} \to L^{a} _{x, \xi}$, which yields
\begin{align*}
TT^* F(t) = \int_{\R} F(s) \circ e^{-(t-s)H_p} (x, \xi) ds.
\end{align*}
Then we obtain
\begin{align*}
\left\| \int_{0}^{t} F(s) \circ e^{-(t-s)H_p} (x, \xi) ds \right\|_{L^q _t L^r _x L^p _{\xi}} &\le \left\| \int_{\R} |F(s)| \circ e^{-(t-s)H_p} (x, \xi) ds \right\|_{L^q _t L^r _x L^p _{\xi}} \\
& = \|TT^* |F|\|_{L^q _t L^r _x L^p _{\xi}} \le \|T\| \|T^*\| \|F\|_{L^{\tilde{q}'} _t L^{\tilde{r}'} _x L^{\tilde{p}'} _{\xi}},
\end{align*}
where $\|T\| = \|T\|_{L^{a} _{x, \xi} \to L^q _t L^r _x L^p _{\xi}}$ and $\|T^*\| = \|T^*\|_{{L^{\tilde{q}'} _t L^{\tilde{r}'} _x L^{\tilde{p}'} _{\xi}} \to L^{a} _{x, \xi}}$.
\end{proof}
Combining Theorem \ref{253201908} with the orthonormal Strichartz estimates proved in the previous section we obtain the following result, which implies Corollary \ref{253202133}.
\begin{corollary}\label{251282306}
Let $(M^{\circ}, g) = (\R^d, g)$ be a nontrapping scattering manifold satisfying Assumption \ref{25121723}, where $|\partial^{\alpha} _z g_{ij} (z)| \lesssim \langle z \rangle^{-|\alpha|}$ holds for any $1 \le i, j \le d$ and $\alpha \in \N^d _0$. Then, for $p(z, \zeta) = \sum_{i, j=1}^{d} g^{ij} (z) \zeta _{i} \zeta _{j} \in S^{2, 0}$ and $(q, r, \beta)$ satisfying $q, r \in [2, \infty], r \in [2, \frac{2(d+1)}{d-1}), \frac{2}{q} = d(\frac{1}{2} - \frac{1}{r})$ and $\beta = \frac{2r}{r+2}$, we have
\begin{align*}
\left\| \int_{\R^d} f \circ e^{-tH_p} (z, \zeta) d\zeta \right\|_{L^{\frac{q}{2}} _t  L^{\frac{r}{2}} (\R^d, dg)} \lesssim \|f\|_{L^{\beta} (T^* \R^d, dgd\zeta)}.
\end{align*}
Therefore (\ref{2412232304}) and (\ref{2412232355}) hold for any non-endpoint KT-admissible quadruplet. 
\end{corollary}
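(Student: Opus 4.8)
The plan is to chain together the two halves of the paper: Theorem~\ref{251241323} supplies the orthonormal Strichartz estimate $(\ref{2412231605})$, and then Theorem~\ref{2412231555} together with Theorem~\ref{253201908} (the latter via Corollaries~\ref{2412232213} and~\ref{2412232345}) converts it into the velocity average estimate and into $(\ref{2412232304})$--$(\ref{2412232355})$. The only real work is to reconcile the two formalisms, since Theorem~\ref{251241323} is phrased for $-\Delta_g$ on $L^2(M^\circ,dg)$, whereas Theorems~\ref{2412231555}--\ref{253201908} are phrased for $P=p^w(z,D_z)$ on $L^2(\R^d,dz)$.

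First I would check the structural hypotheses. Put $p(z,\zeta)=g^{ij}(z)\zeta_i\zeta_j$. Under Assumption~\ref{25121723} the metric is uniformly comparable to the Euclidean one, $(g_{ij})\asymp I$ (near infinity $g$ has the form $dx^2/x^4+h(x)/x^2$ with $h$ a smooth, hence uniformly bounded below and above, family of metrics on the compact $Y$), so $\sqrt g\asymp 1$ and $p(z,\zeta)\asymp|\zeta|^2$, while the bound $|\partial_z^\alpha g_{ij}|\lesssim\langle z\rangle^{-|\alpha|}$ gives $g^{ij}\in S^{0,0}$ and hence $p\in S^{2,0}$. Thus $p$ is real-valued, homogeneous of degree $2$ and uniformly elliptic, so (as recalled in Remark~\ref{2412231614}) $H_p$ is complete — it is just the co-geodesic flow of the complete metric $g$ — and $P=p^w(z,D_z)$ is essentially self-adjoint on $L^2(\R^d)$ with core $C^\infty_0(\R^d)$ by a standard parametrix construction. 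So $p$ satisfies all the hypotheses of Theorems~\ref{2412231555} and~\ref{253201908}, and it remains only to verify $(\ref{2412231605})$ for $P$.

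Next I would transport the orthonormal Strichartz estimate to $P$. Let $\tilde P:=g^{1/4}(-\Delta_g)g^{-1/4}$, which is unitarily equivalent, via $U\colon L^2(M^\circ,dg)\to L^2(\R^d,dz)$, $\psi\mapsto g^{1/4}\psi$, to $-\Delta_g$; a direct computation gives $\tilde P=-\partial_i g^{ij}\partial_j+V_g$ with $V_g=g^{-1/4}\partial_i(g^{ij}\partial_j g^{1/4})\in S^{0,-2}$, so that $\tilde P-p^w(z,D_z)\in\Op S^{1,-1}+\Op S^{0,-2}$. Since $d\ge3$ and $g$ is nontrapping, $-\Delta_g$ has neither a nonpositive eigenvalue nor a zero resonance (classical in this setting), so Theorem~\ref{251241323} with $V=0$ gives $\|\sum_j\nu_j|e^{it\Delta_g}f_j|^2\|_{L^{q/2}_t L^{r/2}(dg)}\lesssim\|\nu\|_{\ell^\beta}$ for orthonormal $\{f_j\}\subset L^2(dg)$ and the stated $(q,r,\beta)$. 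As $\sqrt g\asymp 1$ the $L^{r/2}(dg)$- and $L^{r/2}(dz)$-norms are equivalent, so conjugating by $U$ turns this into $(\ref{2412231605})$ for $e^{-it\tilde P}$ on $L^2(\R^d,dz)$. Finally one passes from $\tilde P$ to $p^w(z,D_z)$: the difference is a very short-range perturbation and is absorbed by the same perturbation argument (Theorem~2.3 of \cite{H1}) used in the proof of Theorem~\ref{251241323}, the first-order part in $\Op S^{1,-1}$ being controlled by the nontrapping local smoothing estimate $(\ref{25313947})$. This establishes $(\ref{2412231605})$ for $P=p^w(z,D_z)$.

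Then Theorem~\ref{2412231555} gives the velocity average estimate $(\ref{2412231609})$ and Theorem~\ref{253201908} gives $(\ref{2412232304})$ and $(\ref{2412232355})$, both on $L^2(\R^d,dz)$ for the exponents in the statement; replacing $dz$ by $dg$ on the base and $dz\,d\zeta$ by $dg\,d\zeta$ on $T^*\R^d$ only changes constants because $\sqrt g\asymp 1$, and the one place where more than a norm comparison is used — the $q=\infty$ and endpoint Liouville-type steps — is unaffected since $e^{-tH_p}$ preserves the Liouville measure $dz\,d\zeta$. The hard part is the middle step: the orthonormal Strichartz estimate is available only for $-\Delta_g$, whereas Theorems~\ref{2412231555}--\ref{253201908} are stated for $p^w(z,D_z)$, and the difference of these operators contains a first-order term that is not a potential. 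One clean way to sidestep this perturbation entirely is to rerun the proofs of Lemma~\ref{2412231610} and Theorem~\ref{2412231555} with $\tilde P$ in place of $p^w(z,D_z)$: the semiclassical commutator then acquires the extra term $\tfrac{i}{h}[h^2(\tilde P-p^w(z,D_z)),F^w_N]$, which is $O(h)$ (it lies in $h\Op S^{0,-1}$) and drops into the same error hierarchy as the remainders $r_j(t)$, so the Egorov parametrix, the pointwise convergence of the densities, and the Fatou/Liouville endgame all go through verbatim.
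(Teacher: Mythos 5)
Your proposal contains two alternative routes, and they should be assessed separately.

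The route you describe at the end — rerun Lemma~\ref{2412231610} and the proof of Theorem~\ref{2412231555} with the actual Hamiltonian (you use $\tilde P=g^{1/4}(-\Delta_g)g^{-1/4}$ on $L^2(dz)$) in place of $p^w(z,D_z)$, noting that the lower-order difference contributes only at order $h$ and therefore falls into the same error hierarchy as the $r_j(t)$ — is correct, and is essentially what the paper does. The only cosmetic difference is that the paper avoids the unitary conjugation $U\colon\psi\mapsto g^{1/4}\psi$ altogether: it keeps $P=-\Delta_g=-\tfrac1{\sqrt g}\partial_i g^{ij}\sqrt g\,\partial_j$ acting on $L^2(\R^d,dg)$, observes that the principal symbol of $h^2P$ is still $p$ so the Egorov parametrix goes through verbatim, and introduces the density $\rho(\cdot)_g$ defined by $\tr(A\chi)=\int\rho(A)_g\,\chi\,dg$ to match the $dg$-measure. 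Everything else is as you say: $\sqrt g\asymp1$ makes the $L^{r/2}(dg)$ and $L^{r/2}(dz)$ norms equivalent and gives $\|a^w(z,hD_z)\|_{\mathfrak S^\beta(L^2(dg))}\lesssim\|a\|_{L^\beta(dg\,d\zeta)}+\mathcal O(h^{1/2})$, and the $q=\infty$ endpoint is handled by Liouville's theorem for $e^{-tH_p}$.

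Your primary route, by contrast, has a genuine gap at the step where you pass from $\tilde P$ to $p^w(z,D_z)$ by a perturbation argument. The difference $\tilde P-p^w(z,D_z)$ contains a first-order term in $\Op S^{1,-1}$, and the machinery invoked in the proof of Theorem~\ref{251241323} (Theorem~2.3 of \cite{H1}, via the factorization $V=|V|^{1/2}\cdot|V|^{1/2}\sgn V$) is specifically a potential-perturbation argument: it produces Schatten-class bounds by sandwiching the resolvent between two multiplication operators, and does not apply to a differential perturbation. Appealing to the local smoothing estimate~$(\ref{25313947})$ is the right instinct for ordinary Strichartz estimates, but it would not immediately yield the Schatten-class restriction estimates that drive the orthonormal version, and you give no indication of how to convert it. Fortunately none of this is necessary, because Theorem~\ref{2412231555} does not actually require the Hamiltonian to equal $p^w(z,D_z)$ exactly — it only requires the Egorov parametrix to close, which holds as soon as the principal symbol is $p$. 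That is precisely why the paper (and your fallback route) sidesteps the perturbation entirely.
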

\begin{proof}
First of all, notice that since $\deter g \sim 1$ uniformly on $\R^d$, the norms $\| \cdot \|_{L^p (\R^d, dg)} = \| \cdot \|_{L^p (\R^d, \sqrt{g} dz)}$ and $\| \cdot \|_{L^p (\R^d, dz)}$ are equivalent. Since $P= -\frac{1}{\sqrt{g}} \partial_{i} g^{ij} \sqrt{g} \partial_{j}$ is not exactly $p^{w} (z, D_z)$ (there appears a lower order term) the above argument does not directly apply to the present case. However modifications are easy. As in Lemma \ref{2412231610}, we can construct $F_N (t, z, \zeta)$ such that
\begin{align*}
\frac{\partial}{\partial t} F^w _N (t, z, hD_z) + \frac{i}{h} [h^2 P, F^w _N (t, z, hD_z)] \in h^N \Op S^{-N, -N}
\end{align*}
since the principle symbol of $P$ is $p$. We define the density $\rho (A)_g$ of an operator $A \in \mathcal{B} (L^2 (\R^d, dg))$ (if exists) by $\tr (A \chi)_{L^2 (\R^d, g)} = \int_{\R^d} \rho (A)_g (z) \chi (z) dg(z)$ for any simple function $\chi$. Then $\rho (e^{itP} \gamma_{0} e^{-itP})_g = \sum_{j=0}^{\infty} \nu_{j} |e^{-itP}  f_j|^2$ holds for $\gamma_{0} =\sum_{j=0}^{\infty} \nu_{j} | f_j \rangle \langle f_j |$, where $\{f_j \}$ is any orthonormal system in $L^2 (\R^d, dg)$. Combining this with Theorem \ref{251241323} we have 
\begin{align*}
\| \rho (e^{itP} \gamma_{0} e^{-itP})_g \|_{L^{\frac{q}{2}} _t  L^{\frac{r}{2}} (\R^d, dg)} \lesssim \| \gamma _{0} \|_{\mathfrak{S}^{\beta} (L^2 (\R^d, dg))}.
\end{align*}
Then other arguments are almost identical since we can estimate, for example,  
\begin{align*}
\| a^w (z, hD_z) \|_{\mathfrak{S}^{\beta} (L^2 (\R^d, dg))} \lesssim \| a^w (z, hD_z) \|_{\mathfrak{S}^{\beta} (L^2 (\R^d, dz^2))} 
&\lesssim \|a\|_{L^{\beta} (T^* \R^d, dzd\zeta)} + \mathcal{O} (h^{\frac{1}{2}}) \\
&\lesssim \|a\|_{L^{\beta} (T^* \R^d, dgd\zeta)} + \mathcal{O} (h^{\frac{1}{2}}).
\end{align*}
We omit details. 
\end{proof}
\begin{remark}\label{253812223}
As we have seen in this section, the quantum Strichartz estimates imply the classical Strichartz estimates. We consider the converse here. Assume $p(x, \xi) = |\xi|^2$. Then by the exact Egorov theorem, (\ref{253291637}) and (\ref{253291638}) are equivalent:
\begin{align}
\left\| \int_{\R^d} f (x-2t\xi, \xi) d\xi \right\|_{L^{\frac{q}{2}} _t  L^{\frac{r}{2}} _x} \lesssim \|f\|_X, \label{253291637} \\
\|\rho (e^{it\Delta} f^w (x, D_x) e^{-it\Delta})\|_{L^{\frac{q}{2}} _t  L^{\frac{r}{2}} _x} \lesssim \|f\|_X. \label{253291638}
\end{align}
Here $X$ is a function space, e.g. $L^p _{x, \xi}$ or the modulation space $M^{p, q}$ (see \cite{To} for the definition). Since $M^{\beta, \min\{\beta, \beta'\}} \subset s^w _{\beta} \subset M^{\beta, \max\{\beta, \beta'\}}$ holds (\cite[(4.19)]{To}), (\ref{253291637}) with $X=M^{\beta, \max\{\beta, \beta'\}}$ implies the orthonormal Strichartz estimates. Note that, by $M^{\beta, \min\{\beta, \beta'\}} \subset L^{\beta} _{x, \xi} \subset M^{\beta, \max\{\beta, \beta'\}}$, (\ref{253291637}) with $X=M^{\beta, \max\{\beta, \beta'\}}$ is stronger than the velocity average estimates ((\ref{253291637}) with $X= L^{\beta} _{x, \xi}$) and the author is not aware if it is derived from the orthonormal Strichartz estimates. For general symbol $p$, even in the modulation setting, the converse may be difficult since global-in-time estimates on the remainder term require additional arguments.
\end{remark}

\section{Negative results on scattering manifolds with trapped sets}\label{2412182325}
In this section we prove Theorem \ref{25371352}. A subset $K \subset T^* \R^d$ is called backward (forward) flow invariant if $e^{-tH_p} (K) \subset K$ holds for all $t \ge 0$ $(t \le 0)$. A canonical measure on $T^* \R^d$ obtained from the symplectic structure on $\R^d$ is denoted by $\mu$ (see \cite{DZ} Section 6). In this section we assume $p \in S^{2, 0}$ is real-valued, homogeneous of degree $2$, $H_p$ is complete on $T^* \R^d$ and $P = p^w (x, D_x)$ is essentially self-adjoint on $L^2 (\R^d)$ with its core $C^{\infty} _{0} (\R^d)$. Now we recall a definition of the stability of a periodic trajectory following \cite{RS}.
\begin{defn}{(\cite[Definition 3.8.1]{RS})}\label{253301237}
A periodic integral curve $\gamma$ of $H_p$ is called stable if the following condition is satisfied: For any neighborhood of $U$ of $\gamma$, there exists another neighborhood $V$ of $\gamma$ such that $e^{tH_p} (V) \subset U$ for all $t \in [0, \infty)$.
\end{defn}
See \cite[{\S}3.8 and 9.7]{RS} for sufficient conditions to ensure the stability, for example, the existence of a Lyapunov function, an assumption on the Floquet multipliers called \textit{elementary}. The latter condition is also assumed in \cite{Ra} to construct quasimodes.
\begin{proposition}\label{25171303}
If there exists a backward or forward flow invariant bounded subset $K \subset T^* \R^d$ satisfying $\mu (K) >0$, then
\begin{align*}
\|f \circ e^{-tH_p}\|_{L^q _t L^r _x L^p _{\xi}} \lesssim \|f\|_{L^a _{x, \xi}}
\end{align*}
fails for any $(q, r, p, a) \in [1, \infty) \times [1, \infty]^3$.
\end{proposition}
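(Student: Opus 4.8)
The plan is to refute the estimate by producing a single explicit counterexample that applies to all quadruplets $(q,r,p,a)$ at once, namely the characteristic function $f = \chi_K$. First I would note that in the standard coordinates $(x,\xi)$ the canonical measure $\mu$ on $T^* \R^d$ is Lebesgue measure $dx\,d\xi$, so the hypothesis $\mu(K) > 0$ says that $K$ has positive Lebesgue measure, while boundedness of $K$ gives $\mu(K) < \infty$. Hence $\chi_K \in L^a _{x,\xi}$ with $\|\chi_K\|_{L^a _{x,\xi}} = \mu(K)^{1/a} < \infty$ for every $a \in [1,\infty]$ (reading $\|\chi_K\|_{L^\infty _{x,\xi}} = 1$ when $a = \infty$), so the right-hand side of the claimed inequality is finite for this $f$; it therefore suffices to show that the left-hand side is infinite.

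Next I would exploit the flow invariance of $K$ to bound $\|\chi_K \circ e^{-tH_p}\|_{L^q _t L^r _x L^p _{\xi}}$ from below. Suppose first that $K$ is backward flow invariant, $e^{-tH_p}(K) \subset K$ for all $t \ge 0$. Then for every $(x,\xi) \in K$ and every $t \ge 0$ one has $e^{-tH_p}(x,\xi) \in K$, so $(\chi_K \circ e^{-tH_p})(x,\xi) = 1$; that is, $\chi_K \circ e^{-tH_p} \ge \chi_K$ pointwise on $T^* \R^d$ for each $t \ge 0$. Consequently $\|(\chi_K \circ e^{-tH_p})(t)\|_{L^r _x L^p _{\xi}} \ge \|\chi_K\|_{L^r _x L^p _{\xi}} =: c_0$ for all $t \ge 0$, and $c_0 > 0$: by Fubini, $\mu(K) > 0$ forces the slice $K_x = \{\xi : (x,\xi) \in K\}$ to have positive Lebesgue measure for $x$ ranging over a set of positive Lebesgue measure, so $\|\chi_K(x,\cdot)\|_{L^p _{\xi}} > 0$ there, and this remains true at the endpoints $p = \infty$ and $r = \infty$. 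Integrating in $t$ over $[0,\infty)$ and using $q < \infty$ gives $\|\chi_K \circ e^{-tH_p}\|_{L^q _t L^r _x L^p _{\xi}} \ge \big( \int_0^\infty c_0 ^q \, dt \big)^{1/q} = \infty$, which contradicts the finiteness of $\|\chi_K\|_{L^a _{x,\xi}}$. If instead $K$ is forward flow invariant, $e^{-tH_p}(K) \subset K$ for all $t \le 0$, the same computation gives $\chi_K \circ e^{-tH_p} \ge \chi_K$ for every $t \le 0$, and integrating over $(-\infty,0]$ again forces the left-hand side to be infinite.

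I do not anticipate any genuine analytic difficulty; the argument is elementary once the test function is fixed, and the points requiring a little care — the \emph{main obstacle}, such as it is — are bookkeeping: identifying $\mu$ with $dx\,d\xi$ so that positivity of $\mu(K)$ is exactly the hypothesis needed for $c_0 > 0$; handling the endpoint exponents $p = \infty$ and $r = \infty$ in the lower bound for $c_0$; and noting that, since $f = \chi_K$ does not depend on $(q,r,p,a)$, this one function refutes the estimate for every quadruplet in the stated range simultaneously (if a dense class of test functions is preferred, one may instead take smooth nonnegative functions supported in $K$ and bounded below on a positive-measure subset, which leaves the lower bound intact). It is worth recording that this is exactly the mechanism behind Theorem \ref{25371352}: the stability of a periodic trajectory $\gamma$ produces, via Definition \ref{253301237}, a bounded forward flow invariant neighbourhood of $\gamma$ of positive measure — for instance $\bigcup_{s \ge 0} e^{sH_p}(V)$ for a small neighbourhood $V$ of $\gamma$ with $\bigcup_{s \ge 0} e^{sH_p}(V)$ contained in a bounded set — to which the proposition then applies.
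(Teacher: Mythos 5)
Your main argument is correct and tracks the paper's proof closely: both exploit backward (resp.\ forward) flow invariance of $K$ to get a $t$-uniform positive lower bound on $\|f\circ e^{-tH_p}\|_{L^r_x L^p_\xi}$ over a half-line, and then conclude that the $L^q_t$ norm diverges because $q<\infty$. The only genuine difference is the test function. The paper takes $f\in C^\infty_0(T^*\R^d)$ with $f\equiv 1$ on a neighbourhood of $K$, obtains $\|f\circ e^{-tH_p}\|_{L^1(K)}\ge\mu(K)$ directly from $e^{-tH_p}(K)\subset K$, and compares $L^1(K)$ to $L^r_xL^p_\xi$ by H\"older on the bounded set $K$. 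You take $f=\chi_K$ and use the cleaner pointwise inequality $\chi_K\circ e^{-tH_p}\ge\chi_K$, paying for it with an approximation step (Fatou plus density of $C^\infty_0$ in $L^a$) since the estimate is deployed against smooth $f$ when the proposition is combined with Theorem~\ref{2412231555} in the proof of Theorem~\ref{25371352}. The paper's smooth choice avoids that step and is the version actually needed downstream.

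One caveat on your parenthetical alternative: ``smooth nonnegative functions supported in $K$ and bounded below on a positive-measure subset'' is not a safe replacement as stated. A bounded invariant set of positive measure need not have nonempty interior, in which case no nonzero smooth function can be supported in $K$; and even when the interior is nonempty, being bounded below only on a positive-measure subset $E\subsetneq K$ does not immediately give the uniform-in-$t$ lower bound — one then has to invoke Liouville's theorem ($\mu(e^{-tH_p}(K))=\mu(K)$, so $e^{-tH_p}(K)=K$ up to a null set) to control $\mu(E\cap e^{-tH_p}(K))$, which you do not do. Taking $f\equiv 1$ on an open neighbourhood of $K$, as the paper does, removes both issues: $f$ is then identically $1$ on $e^{-tH_p}(K)\subset K$ for all relevant $t$, so the lower bound is immediate and the function is already in the smooth class. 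Aside from this side remark, your proof is sound.
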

\begin{proof}
Let $\pi : T^* \R^d \to \R^d$ be the canonical projection. We take $f \in C^{\infty} _0 (T^* \R^d)$ such that $f \equiv 1$ near $K$. Since $K$ is bounded we have
\begin{align*}
\|f \circ e^{-tH_p}\|_{L^1 (K)} \lesssim \| \|f \circ e^{-tH_p}\|_{L^p _{\xi}} \|_{L^1 (\pi (K))} \lesssim \|f \circ e^{-tH_p}\|_{L^r _x L^p _{\xi}}.
\end{align*}
By definition of $f$, $\|f \circ e^{-tH_p}\|_{L^1 (K)} \ge \mu (K) >0$ holds for any $t \ge 0$ $(t \le 0)$. These estimates imply $\|f \circ e^{-tH_p}\|_{L^q _t L^r _x L^p _{\xi}} = \infty$.
\end{proof}
For reader's convenience we recall the definition of the failure of the orthonormal Strichartz estimates. In the following definition, ``sharp'' comes from the fact that for $P=-\Delta$, the largest (the best) $\beta$ satisfying (\ref{25171415}) is $\beta = \frac{2r}{r+2}$.
\begin{defn}\label{25171343}
We say that the sharp orthonormal Strichartz estimates fail if and only if for any $(q, r, \beta)$ satisfying $q, r \in [2, \infty), r \in [2, \frac{2(d+1)}{d-1}), \frac{2}{q} = d(\frac{1}{2} - \frac{1}{r})$ and $\beta = \frac{2r}{r+2}$,
\begin{align}
\left\| \sum_{j=0}^{\infty} \nu_j |e^{-itP}  f_j|^2 \right\|_{L^{\frac{q}{2}} _t  L^{\frac{r}{2}} _x} \lesssim \|\nu\|_{\ell^{\beta}}\label{25171415}
\end{align}
does not hold uniformly in orthonormal $\{f_j\} \subset L^2$ and $\nu = \{\nu_{j}\}$.
\end{defn}
In 1-dimensional case, just a periodic trajectory breaks the orthonormal Strichartz estimates. This is a special phenomenon coming from $\dimm T^* \R =2$.
\begin{corollary}\label{25171400}
Assume $d=1$. If there exists a periodic trajectory $\gamma \subset T^* \R$ associated to $H_p$, the sharp orthonormal Strichartz estimates fail.
\end{corollary}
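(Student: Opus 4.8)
The plan is to reduce the statement to two facts already available: the contrapositive of Theorem \ref{2412231555}, which says that if any of the sharp orthonormal Strichartz estimates (\ref{2412231605}) holds then the corresponding velocity average estimate (\ref{2412231609}) holds; and Proposition \ref{25171303}, which shows such a velocity average estimate must fail once there is a bounded, forward (or backward) flow invariant set $K \subset T^*\R$ with $\mu(K) > 0$. So the entire content is to manufacture such a $K$ from the periodic trajectory $\gamma$. The point where $d = 1$ enters --- exactly the ``special phenomenon coming from $\dimm T^*\R = 2$'' noted before the statement --- is that a non-constant periodic integral curve of $H_p$ is an embedded circle, i.e. a Jordan curve in $\R^2 \cong T^*\R$.

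First I would invoke the Jordan curve theorem: $\R^2 \setminus \gamma$ has exactly two connected components, a bounded open one $\Omega \neq \emptyset$ and an unbounded one. The key claim is that $\Omega$ is invariant under $e^{tH_p}$ for every $t \in \R$. Indeed, for $w_0 \in \Omega$ the orbit $s \mapsto e^{sH_p} w_0$ is a continuous curve which, by uniqueness of solutions to $\dot w = H_p(w)$ (recall $\gamma$ is itself an integral curve, so no other orbit can meet it), never touches $\gamma$; being connected and based in $\Omega$, it stays in $\Omega$ for all time. Hence $K := \Omega$ is bounded, flow invariant, and --- being a nonempty open subset of $\R^2$ --- satisfies $0 < \mu(K) < \infty$. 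This is the heart of the argument: short, but the only place a genuine idea sits, and it is worth being careful that ``periodic trajectory'' is meant here to be a non-constant closed orbit (a constant orbit would be an equilibrium of $H_p$, and by Euler's identity for the degree-two homogeneous $p$ this forces $p = 0$ and an entire invariant ray through the point, a degenerate situation one discards or treats on its own).

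With $K$ in hand, Proposition \ref{25171303} gives that $\|f \circ e^{-tH_p}\|_{L^q_t L^r_x L^p_\xi} \lesssim \|f\|_{L^a_{x,\xi}}$ fails for every $(q,r,p,a) \in [1,\infty) \times [1,\infty]^3$; specializing to $p = 1$, the estimate (\ref{2412231609}) fails whenever its time exponent $q/2$ lies in $[1,\infty)$, i.e. whenever $q \in [2,\infty)$. Finally I would fix $(q,r,\beta)$ as in Definition \ref{25171343}: when $d = 1$ the scaling $\frac{2}{q} = \frac{1}{2} - \frac{1}{r}$ together with $q, r \in [2,\infty)$ forces $r \in (2,\infty)$, $q \in (4,\infty)$ and $\beta = \frac{2r}{r+2}$, so these exponents fall within the hypotheses of Theorem \ref{2412231555}. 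If (\ref{25171415}) held for such a $(q,r,\beta)$, Theorem \ref{2412231555} would yield (\ref{2412231609}) for the same $(q,r,\beta)$ with $q \in (4,\infty) \subset [2,\infty)$, contradicting the previous step. Therefore (\ref{25171415}) fails for every admissible $(q,r,\beta)$, which is precisely the statement that the sharp orthonormal Strichartz estimates fail. The main (indeed essentially only) obstacle is the topological Step 1 above; everything downstream is bookkeeping with the two quoted results.
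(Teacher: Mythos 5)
Your proof is correct and follows essentially the same route as the paper: Jordan curve theorem to produce the bounded region $\Omega$ enclosed by $\gamma$, flow-invariance of $\Omega$ (the paper takes $K = \Omega \cup \gamma$, you take $K = \Omega$; either works) to meet the hypothesis of Proposition \ref{25171303}, and then the contrapositive of Theorem \ref{2412231555}. You supply a bit more detail than the paper on why $\Omega$ is flow-invariant (via uniqueness of integral curves) and on discarding the degenerate constant-orbit case, but the underlying argument is the same.
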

\begin{proof}
By the Jordan curve theorem there exists a bounded connected region $\Omega \subset T^* \R$ such that $\partial \Omega = \gamma$. Then we can take $K := \Omega \cup \gamma$ as a forward and backward flow invariant set in Proposition \ref{25171303}. If we assume  (\ref{25171415}) holds for some $(q, r, \beta)$ as in Definition \ref{25171343}, then Theorem \ref{2412231555} yields $\|f \circ e^{-tH_p}\|_{L^{\frac{q}{2}} _t L^{\frac{r}{2}} _x L^1 _{\xi}} \lesssim \|f\|_{L^{\beta} _{x, \xi}}$. However this contradicts the conclusion in Proposition \ref{25171303}.
\end{proof}
Next we construct a Riemannian metric as in Theorem \ref{25371352} (iii).
We consider two diffeomorphisms $F$ and $G$. Let $N$ be the north pole of $\mathbb{S}^{d}$ for $d \ge 2$. Then we define $F: \mathbb{S}^d \setminus \{N\} (\subset \R^{d+1}) \to \R^d$ by $F(x_1, \ldots, x_{d+1}) = (\frac{x_1}{1-x_{d+1}}, \ldots, \frac{x_d}{1-x_{d+1}})$ and $G: \R^d \to \mathbb{B}^d$ by $G(x) = \frac{x}{\langle x \rangle}$. Let $g_{\mathbb{S}^d \setminus \{N\}} = \iota ^* g_{\mathbb{S}^d}$ be the pull-back of the standard metric $g_{\mathbb{S}^d}$ on $\mathbb{S}^d$ by the inclusion $\iota: \mathbb{S}^d \setminus \{N\} \hookrightarrow \mathbb{S}^d$. For $r_0$, $\epsilon \in (0, 1)$ satisfying $r_0 + 2\epsilon <1$, we take a cutoff function $\chi \in C^{\infty} ([0, \infty); [0, 1])$ such that 
\begin{align}
\chi (r) =
\begin{cases}
1 & (r \le r_0 + \epsilon) \\
0 & (r \ge r_0 +2\epsilon) \label{252131340}.
\end{cases}
\end{align}
\begin{proposition}\label{251151538}
If $r_0$ and $\epsilon$ are sufficiently small, then
\[g_{sc} := (1- \chi (|x|))dx^2 + \chi (|x|) (G^{-1})^* (F^{-1})^* g_{\mathbb{S}^d \setminus \{N\}}\]
is a well-defined scattering metric on $\R^d$. Furthermore the sharp orthonormal Strichartz estimates fail for $-\Delta_{g_{sc}}$.
\end{proposition}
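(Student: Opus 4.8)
The plan is to verify that $g_{sc}$ is a genuine scattering metric and then to exhibit, inside $T^*\R^d$, a bounded set of positive symplectic volume invariant under the Hamilton flow of $p(x,\xi)=g_{sc}^{ij}(x)\xi_i\xi_j$; by Proposition~\ref{25171303} this breaks the velocity average estimate, and the contrapositive of the Egorov argument behind Theorem~\ref{2412231555} then breaks the sharp orthonormal Strichartz estimates for $-\Delta_{g_{sc}}$.

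For the first point: since $F$ is stereographic projection from $N$ and $G^{-1}(x)=x/\sqrt{1-|x|^2}$, the tensor $(G^{-1})^*(F^{-1})^*g_{\mathbb{S}^d\setminus\{N\}}$ is the pullback of the smooth round metric under smooth diffeomorphisms, hence a smooth Riemannian metric on $\mathbb{B}^d$; in polar coordinates $x=\rho\omega$ one checks it equals $\tfrac{4}{1-\rho^2}\,d\rho^2+4\rho^2(1-\rho^2)\,d\omega^2$ with $d\omega^2$ the round metric on $\mathbb{S}^{d-1}$. Because $r_0+2\epsilon<1$ and $\chi(|x|)\equiv1$ near $x=0$, the combination $g_{sc}$ is a smooth symmetric $2$-tensor on $\R^d$, positive definite (a convex combination of two metrics where both are defined, and equal to one of them elsewhere), and equal to $dx^2$ for $|x|\ge r_0+2\epsilon$; radial compactification then puts it in the normal form of Assumption~\ref{25121723}. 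It is moreover complete and elliptic, so $H_p$ is complete and $-\Delta_{g_{sc}}$ is essentially self-adjoint on $C_0^\infty(\R^d)$, as required in Section~\ref{2412182325}.

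The heart of the argument is the trapped set. Writing $g_{sc}=A(\rho)\,d\rho^2+\rho^2B(\rho)\,d\omega^2$, I would study the warping function $\beta(\rho)=\rho\sqrt{B(\rho)}$; from the formulas above $\beta(\rho)=2\rho\sqrt{1-\rho^2}$ for $\rho\le r_0+\epsilon$ and $\beta(\rho)=\rho$ for $\rho\ge r_0+2\epsilon$. The key geometric fact is that a small stereographically transplanted spherical cap is \emph{fatter} than the Euclidean ball of the same coordinate radius: $\beta$ is smooth, strictly increasing on $[0,r_0+\epsilon]$ (as $r_0+\epsilon<1/\sqrt2$) and on $[r_0+2\epsilon,\infty)$, while for $r_0,\epsilon$ small (say $\epsilon\le r_0$) one has $\beta(r_0+\epsilon)=2(r_0+\epsilon)\sqrt{1-(r_0+\epsilon)^2}>r_0+2\epsilon=\beta(r_0+2\epsilon)$. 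Hence $\beta$ has an interior local maximum at some $\rho_{\max}\in(r_0+\epsilon,r_0+2\epsilon)$ with $\beta(\rho_{\max})>\beta(r_0+\epsilon)=:v_1>r_0+2\epsilon$, and $\{|x|=\rho_{\max}\}$ carries a stable periodic geodesic in the sense of Definition~\ref{253301237}, so Theorem~\ref{25371352}(ii) would already apply; more self-containedly I would argue directly. Fix $c\in(v_1,\beta(\rho_{\max}))$. Then $\{\rho:\beta(\rho)\ge c\}$ has a bounded component $[\rho_-,\rho_+]\subset(r_0+\epsilon,r_0+2\epsilon)$ containing $\rho_{\max}$, separated from its unbounded component $\{\rho\ge c\}$ by a gap on which $\beta<c$. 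By Clairaut's relation --- conservation of the energy $p$ and of the angular-momentum magnitude $\ell$, together with confinement of a trajectory of speed $\sqrt p$ to $\{\beta(\rho)\ge\ell/\sqrt p\}$ --- every trajectory with $|x|\in(\rho_-,\rho_+)$ and $\ell>c\sqrt p$ stays in the annulus $\{\rho_-<|x|<\rho_+\}$ for all time. Consequently
\[
K:=\bigl\{(x,\xi)\in T^*\R^d:\ \rho_-<|x|<\rho_+,\ \ell(x,\xi)>c\sqrt{p(x,\xi)},\ \tfrac{1}{4}<p(x,\xi)<4\bigr\}
\]
is a nonempty open, bounded, forward- and backward-flow-invariant set, so $\mu(K)>0$.

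Finally, for each admissible triple $(q,r,\beta)$ as in Definition~\ref{25171343} I would assume the orthonormal Strichartz estimate for $P=-\Delta_{g_{sc}}$ and run the Egorov-type parametrix argument of Theorem~\ref{2412231555} --- in the form used in the proof of Corollary~\ref{251282306}, where $P$ is a Laplace--Beltrami operator with principal symbol $p$ and where nontrapping is never used in this implication --- to obtain $\|f\circ e^{-tH_p}\|_{L^{q/2}_tL^{r/2}_xL^1_\xi}\lesssim\|f\|_{L^\beta_{x,\xi}}$ for $f\ge0$. Taking $f\in C_0^\infty(T^*\R^d)$ with $f\equiv1$ near $K$, this contradicts Proposition~\ref{25171303}. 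Hence the sharp orthonormal Strichartz estimates fail for $-\Delta_{g_{sc}}$, which together with the first step proves the proposition and Theorem~\ref{25371352}(iii). I expect the trapped-set step to be the main obstacle: the geometric insight that transplanting even an arbitrarily small spherical cap and smoothly gluing it to the flat exterior forces a bulge in the warping function whose equator is a stable closed geodesic, and the care needed to make the Clairaut confinement rigorous --- including the energy window that makes $K$ of positive symplectic (not merely energy-shell) measure.
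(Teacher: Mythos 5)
Your proposal is correct, and it proves the proposition by a genuinely different route from the paper. The paper transplants the spherical geodesic flow explicitly: it takes a small open neighborhood $U\times V$ of the equatorial point $(X_0,\Xi_0)\in T^*(\mathbb{S}^d\setminus\{N\})$, shows by a compactness argument that the full spherical orbit of $U\times V$ stays in the band $\{|x_{d+1}|<1/2\}$, transports this to a compact flow-invariant set $K\subset T^*\mathbb{B}^d$ of positive volume via the musical isomorphisms and the diffeomorphism $(G\times dG)\circ(F\times dF)$, and then chooses $r_0$ \emph{after the fact} so that $\pi(K)\subset\{|x|<r_0\}$, ensuring $\chi\equiv 1$ there and hence that the $g_{sc}$-flow agrees with the spherical one on $K$. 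Because the transplanted equator sits at $|x|=1/\sqrt{2}$, the paper's $r_0$ is actually not small (roughly $r_0\gtrsim\sqrt{3}/2$, with $\epsilon$ small); the phrase ``$r_0$ sufficiently small'' in the statement is imprecise there. Your route exploits instead the rotational symmetry of $g_{sc}$: you compute the warping function $\beta(\rho)=\rho\sqrt{\tilde B(\rho)}$, verify $\beta=2\rho\sqrt{1-\rho^2}$ on $[0,r_0+\epsilon]$ and $\beta=\rho$ on $[r_0+2\epsilon,\infty)$, observe that $\beta(r_0+\epsilon)>r_0+2\epsilon$ when $r_0,\epsilon$ are small, and hence that $\beta$ has an interior local maximum in the transition annulus; Clairaut's relation (conservation of $p$ and of $\ell=|\xi_\omega|$, confinement to $\{\beta\ge\ell/\sqrt{p}\}$) then traps an open bounded set $K$ of positive symplectic measure. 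Both arguments then funnel into the same endgame: Proposition~\ref{25171303} breaks the velocity average estimate, and Theorem~\ref{2412231555} (adapted as in Corollary~\ref{251282306} to absorb the subprincipal part of $-\Delta_{g_{sc}}$, an implication which never uses nontrapping) gives the contradiction. Your approach is more elementary and self-contained, works with genuinely small $r_0,\epsilon$ (so it matches the stated hypotheses more literally), and makes the trapping mechanism explicit --- a stable closed geodesic at the equator of the bulge, so Theorem~\ref{25371352}(ii) applies directly as you note; the paper's approach is more robust to replacing the sphere by any manifold with a stable closed geodesic (cf.\ Remark~\ref{253301827}), since it never uses warped-product structure.
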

\begin{proof}
Let $\pi : T^* (\mathbb{S}^d \setminus \{N\}) \to \mathbb{S}^d \setminus \{N\}$ be the canonical projection. For $X_0 = (1, 0, \ldots, 0) \in \mathbb{S}^d \setminus \{N\} (\subset \R^{d+1})$, we take a chart $W$ near $X_0$ such that $\pi ^{-1} (W) = W \times \R^d$. We set $\Xi_{0} = (0, 1, 0, \dots, 0) \in T^* _{X_0} W (\subset \R^{d+1})$. Then, for the geodesic flow $\Phi _{t} (X, \Xi)$ on $T^* (\mathbb{S}^d \setminus \{N\})$, $\pi (\Phi _{t} (X_0, \Xi_{0}))$ is a great circle. We show that there exists a neighborhood $U \subset W$ of $X_0$ and a neighborhood $V \subset \R^d $ of $\Xi_{0}$ such that $\pi (\{ \Phi_{t} (U\times V) \mid t \in \R\}) \subset \{x \in \R^{d+1} \mid |x_{d+1}| < \frac{1}{2}\}$. Since $\Phi _{t} (X, \Xi) = (y(t, X, \Xi), \eta (t, X, \Xi))$ satisfies $(y(t, X, \lambda \Xi), \eta (t, X, \lambda \Xi)) = (y(\lambda t, X, \Xi), \lambda \eta (\lambda t, X, \Xi ))$ for $\lambda >0$, the periods of $\Phi_{t} (X, \Xi)$, $(X, \Xi) \in U \times V$ are uniformly estimated from above and below by positive constants. Hence there exists $T >0$ satisfying $\{ \Phi_{t} (U\times V) \mid t \in \R\} = \{ \Phi_{t} (U\times V) \mid t \in [0, T]\}$. Suppose there exists $(X_n, \Xi_{n}) \in U \times V$ such that $(X_n, \Xi_{n}) \to (X_0, \Xi_{0})$ as $n \to \infty$ and $\pi (\Phi_{t_n} (X_n, \Xi_{n})) \notin \{ |x_{d+1}| < \frac{1}{2}\}$ for some $t_n \in [0, T]$. Then by passing to a subsequence we have $t_n \to t_{*}$ for some $t_{*} \in [0, T]$ and $\pi (\Phi_{t_n} (X_n, \Xi_{n})) \to \pi (\Phi_{t_{*}} (X_0, \Xi_{0})) \in \{x_{d+1} =0\}$. This contradicts the definition of $(X_n, \Xi_{n})$ and we have desired $U$ and $V$. Now we proceed as
\begin{align*}
&T^* (\mathbb{S}^d \setminus \{N\}) \supset U \times V \mapsto \sharp (U \times V) \subset T (\mathbb{S}^d \setminus \{N\}) \\
& \mapsto F \times dF (\sharp (U \times V)) \subset T\R^d \mapsto  (G \times dG) \circ (F \times dF) (\sharp (U \times V)) \subset T \mathbb{B}^d \\
& \mapsto \mathcal{X} := \flat ((G \times dG) \circ (F \times dF) (\sharp (U \times V))) \subset T^* \mathbb{B}^d.
\end{align*}
Here $\sharp$ and $\flat$ are musical isomorphisms on $T^* (\mathbb{S}^d \setminus \{N\})$ and $T \mathbb{B}^d$.
Since every map above is a diffeomorphism, we have $\VOL _{T^* \mathbb{B}^d} (\mathcal{X}) >0$. We set $K := \{\Psi_{t} (\mathcal{X}) \mid t \in \R\} \Subset T^* \mathbb{B}^d$, where $\Psi_{t}$ is the geodesic flow on $T^* \mathbb{B}^d$, and then $\VOL _{T^* \mathbb{B}^d} (K) >0$ holds. Since we know $\pi (\{ \Phi_{t} (U\times V) \mid t \in \R\}) \subset \{x \in \R^{d+1} \mid |x_{d+1}| < \frac{1}{2}\}$, there exists $r_0 \in (0, 1)$ such that $\pi (K) \subset \{|x| < r_0\}$. For this $r_0$, we take $\epsilon \in (0, 1)$ and $\chi$ as in (\ref{252131340}) to define $g_{sc}$. By a support property of $\chi$, $g_{sc}$ is a compactly supported perturbation of the Euclidean metric $dx^2$. Since the geodesic flow on $T^* \R^d$ associated with $g_{sc}$ is equal to $\Psi_{t}$ on $K \subset T^* \R^d$, we can apply Proposition \ref{25171303} and the rest of the argument is identical to that of Corollary \ref{25171400}.
\end{proof}
\begin{remark}\label{253301827}
The author believes this construction should work even if we replace the sphere with other manifolds with a stable geodesic since we do not use explicit formulas of $F$ and $G$.
\end{remark}
\begin{proof}[Proof of Theorem \ref{25371352}]
(i) and (iii) follow from Corollary \ref{25171400} and Proposition \ref{251151538}. (ii) follows from taking $K= \{e^{tH_p} (V) \mid t \ge 0\}$ in Proposition \ref{25171303}, where $V$ is from Definition \ref{253301237}.
\end{proof}
\section{Boltzmann equations on scattering manifolds}\label{2412182326}
In this section we prove Theorem \ref{25121757} following the case $p(x, \xi)=|\xi|^2$ in \cite{CDP, HJ1, HJ2}. For convenience we split the collisional term $Q$ into the gain term $Q^+$ and the loss term $Q^{-}$:
\begin{align}
&Q^{+} (f, g) (\xi) = \int_{\R^d} \int_{\mathbb{S}^{d-1}} f(\xi ')g(\xi ' _{*}) B(\xi - \xi_{*}, \omega) d \omega d \xi _{*} \label{25112305} \\
&Q^{-} (f, g) (\xi) = \int_{\R^d} \int_{\mathbb{S}^{d-1}} f(\xi) g(\xi _{*}) B(\xi - \xi_{*}, \omega) d \omega d \xi _{*} \label{24112306}.
\end{align}
Hence we have $Q (f, g) = Q^+ (f, g) - Q^{-} (f, g)$. First we consider the Boltzmann equation without loss term, which is easily handled by a contraction.
\begin{proposition}\label{25112308}
Assume $\gamma = 2-d$, $d =2, 3$, (\ref{2412232304}) and (\ref{2412232355}) for any non-endpoint KT-admissible quadruplets $(q, r, p, a)$ and $(\tilde{q}, \tilde{r}, \tilde{p}, a')$. Then the gain-only Boltzmann equation:
 \[
\left\{
\begin{array}{l}
\partial _{t} f(t, x, \xi) +H_p f(t, x, \xi) = Q^+ (f, f) (t, x, \xi) \\
f(0, x, \xi) = f_0 (x, \xi) \\
\end{array}
\right.
\tag{B+}\label{251123140}
\]
has a unique global solution $f \in C ([0, \infty); L^d _{x, \xi}) \cap L^{\boldsymbol{q}} ([0, \infty); L^{\boldsymbol{r}} _x L^{\boldsymbol{p}} _{\xi})$ for $(\boldsymbol{q}, \boldsymbol{r}, \boldsymbol{p}) \in \Lambda$ provided $\|f_0\|_{L^d _{x, \xi}}$ is sufficiently small. Furthermore there exists $f^{+} \in L^d _{x, \xi}$ such that
\begin{align*}
\|f(t) - f^{+} \circ e^{-tH_p} \|_{L^d _{x, \xi}} \to 0 \quad as \quad t \to \infty.
\end{align*}
\end{proposition}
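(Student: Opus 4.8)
The plan is to recast (\ref{251123140}) as a Duhamel fixed point problem and solve it by a contraction argument powered by the Strichartz estimates (\ref{2412232304}) and (\ref{2412232355}). Set
\[
\Phi(f)(t) := f_0 \circ e^{-tH_p} + \int_{0}^{t} Q^{+}(f,f)(s) \circ e^{-(t-s)H_p}\, ds,
\]
so that a fixed point of $\Phi$ is a solution of (\ref{251123140}). I would work in the complete space $Y$ of functions $f$ on $[0,\infty) \times T^*\R^d$ with
\[
\|f\|_{Y} := \|f\|_{C([0,\infty); L^d_{x,\xi})} + \sup_{(\boldsymbol{q},\boldsymbol{r},\boldsymbol{p}) \in \Lambda} \|f\|_{L^{\boldsymbol{q}}_t L^{\boldsymbol{r}}_x L^{\boldsymbol{p}}_\xi} < \infty
\]
(in practice one keeps finitely many points of the curve $\Lambda$ in the norm and recovers the rest afterwards). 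A direct computation from the definition of $\Lambda$ gives $\hm(\boldsymbol{p},\boldsymbol{r}) = d$ and $\frac{1}{\boldsymbol{q}} = \frac{d}{2}\bigl(\frac{1}{\boldsymbol{p}} - \frac{1}{\boldsymbol{r}}\bigr)$, so every $(\boldsymbol{q},\boldsymbol{r},\boldsymbol{p},d)$ is a non-endpoint KT-admissible quadruplet; hence the homogeneous estimate (\ref{2412232304}) (together with the measure-preserving property of $e^{-tH_p}$, which places $f_0 \circ e^{-tH_p}$ in $C([0,\infty);L^d_{x,\xi})$ after a density argument) yields $\|f_0 \circ e^{-tH_p}\|_{Y} \lesssim \|f_0\|_{L^d_{x,\xi}}$.

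The core is a bilinear estimate for the gain term. The structural point is that $Q^{+}$ acts only in the momentum variable: for fixed $(t,x)$, $Q^{+}(f,g)(t,x,\cdot)$ is the Euclidean gain operator applied to $f(t,x,\cdot)$ and $g(t,x,\cdot)$, so no geometry enters here and the manifold only appears through the assumed linear bounds. Using $B(\xi-\xi_*,\omega)=|\xi-\xi_*|^{\gamma} b(\cos\theta)$ with $\gamma = 2-d \le 0$, $d=2,3$, and Grad's cut-off $b \in L^1(\mathbb{S}^{d-1})$, a Gustafsson-type (generalized Young) inequality in $\xi$ gives $\|Q^{+}(F,G)\|_{L^{\tilde{p}'}_\xi} \lesssim \|F\|_{L^{\sigma_1}_\xi}\|G\|_{L^{\sigma_2}_\xi}$ for suitable $\sigma_1,\sigma_2$; combining it with Hölder in $x$ and then in $t$ produces
\[
\|Q^{+}(f,g)\|_{L^{\tilde{q}'}_t L^{\tilde{r}'}_x L^{\tilde{p}'}_\xi} \lesssim \|f\|_{L^{q_1}_t L^{\rho_1}_x L^{\sigma_1}_\xi}\,\|g\|_{L^{q_2}_t L^{\rho_2}_x L^{\sigma_2}_\xi} \lesssim \|f\|_{Y}\|g\|_{Y},
\]
where the intermediate triples $(q_i,\rho_i,\sigma_i)$ are chosen so that their norms are dominated by $\|\cdot\|_{Y}$ and $(\tilde{q},\tilde{r},\tilde{p},a')$ is a non-endpoint KT-admissible quadruplet; the identity $\gamma = 2-d$ is exactly what makes the scaling close at the critical regularity $L^d_{x,\xi}$. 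Feeding this into the inhomogeneous estimate (\ref{2412232355}) gives
\[
\Bigl\| \int_{0}^{t} Q^{+}(f,g)(s) \circ e^{-(t-s)H_p}\, ds \Bigr\|_{Y} \lesssim \|f\|_{Y}\|g\|_{Y}.
\]

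With these two estimates the rest is routine. If $\|f_0\|_{L^d_{x,\xi}}$ is small enough, $\Phi$ maps a small closed ball of $Y$ into itself, and since $Q^{+}(f,f) - Q^{+}(g,g) = Q^{+}(f-g,f) + Q^{+}(g,f-g)$ it is a contraction there, giving a unique global $f \in Y$; re-running the bilinear bound with the full curve $\Lambda$ shows $f \in L^{\boldsymbol{q}}_t L^{\boldsymbol{r}}_x L^{\boldsymbol{p}}_\xi$ for every $(\boldsymbol{q},\boldsymbol{r},\boldsymbol{p}) \in \Lambda$. For scattering, rewrite the equation as $f(t) \circ e^{tH_p} = f_0 + \int_{0}^{t} Q^{+}(f,f)(s) \circ e^{sH_p}\, ds$; the same bilinear estimate shows $s \mapsto Q^{+}(f,f)(s) \circ e^{sH_p}$ lies in $L^1([0,\infty);L^d_{x,\xi})$, so $f^{+} := f_0 + \int_{0}^{\infty} Q^{+}(f,f)(s) \circ e^{sH_p}\, ds$ is a well-defined element of $L^d_{x,\xi}$ and $\|f(t) - f^{+} \circ e^{-tH_p}\|_{L^d_{x,\xi}} = \bigl\| \int_{t}^{\infty} Q^{+}(f,f)(s) \circ e^{sH_p}\, ds \bigr\|_{L^d_{x,\xi}} \to 0$. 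I expect the main obstacle to be the bilinear gain estimate: one must pin down $(\sigma_1,\sigma_2)$ and the space-time exponents so that simultaneously (a) the Gustafsson inequality holds with the soft-potential singularity $|\xi-\xi_*|^{-(d-2)}$, (b) Hölder in $x$ and $t$ lands inside the $\Lambda$-controlled norms of $Y$, and (c) the output exponents form the dual of a non-endpoint KT-admissible quadruplet so that (\ref{2412232355}) applies. All of the manifold-specific difficulty has already been absorbed into the hypotheses (\ref{2412232304})–(\ref{2412232355}), so apart from this bookkeeping the argument parallels the Euclidean case of \cite{CDP, HJ1, HJ2}.
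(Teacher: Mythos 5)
Your overall strategy — Duhamel fixed point in Strichartz spaces, bilinear gain estimate from the cutoff Gustafsson/Young inequality, small-data contraction — is the same as the paper's, and the existence and uniqueness portion is essentially correct modulo filling in the exponents (which come from \cite{HJ1} Proposition 2.3 in the form $\|Q^{+}(f,g)\|_{L^{\tilde{\boldsymbol{p}}'}_\xi}\lesssim\|f\|_{L^{\boldsymbol{p}}_\xi}\|g\|_{L^{\boldsymbol{p}}_\xi}$ together with H\"older in $(t,x)$ using $\tilde{\boldsymbol{q}}'=\boldsymbol{q}/2$, $\tilde{\boldsymbol{r}}'=\boldsymbol{r}/2$). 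The paper keeps the contraction norm to a single $L^{\boldsymbol{q}}_t L^{\boldsymbol{r}}_x L^{\boldsymbol{p}}_\xi$ and recovers $C([0,\infty);L^d_{x,\xi})$ afterwards by a duality argument; your choice to bake the $C_tL^d$ norm into $Y$ from the start is harmless in principle, but it obliges you to produce the same duality estimate at the contraction stage, which you currently gloss over.

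The real gap is in the scattering step. You assert that $s\mapsto Q^{+}(f,f)(s)\circ e^{sH_p}$ lies in $L^{1}([0,\infty);L^{d}_{x,\xi})$ and deduce convergence of the Duhamel integral from absolute integrability. This cannot be correct: under the critical scaling $f_\lambda(t,x,\xi)=\lambda^{2}f(\lambda^{2}t,\lambda x,\lambda\xi)$, every norm $\|\cdot\|_{L^{\boldsymbol q}_t L^{\boldsymbol r}_x L^{\boldsymbol p}_\xi}$ with $(\boldsymbol q,\boldsymbol r,\boldsymbol p)\in\Lambda$ is scale-invariant, so $\|f\|_Y$ has homogeneity $0$, whereas $\|Q^{+}(f,f)\circ e^{sH_p}\|_{L^{1}_t L^{d}_{x,\xi}}$ has homogeneity $-2$; no bilinear estimate $\|Q^{+}(f,f)\|_{L^1_tL^d_{x,\xi}}\lesssim\|f\|_Y^2$ can hold. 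What actually makes the tail vanish is the \emph{dual} Strichartz estimate, i.e.\ the bound
\begin{align*}
\Bigl\| \int_{\tau}^{t} Q^{+}(f,f)(s)\circ e^{sH_p}\,ds \Bigr\|_{L^{d}_{x,\xi}}
\lesssim \|Q^{+}(f,f)\|_{L^{\tilde{\boldsymbol q}'}([\tau,t];L^{\tilde{\boldsymbol r}'}_xL^{\tilde{\boldsymbol p}'}_\xi)},
\end{align*}
which follows by pairing against $\phi\in L^{d/(d-1)}_{x,\xi}$ and using the homogeneous Strichartz estimate (\ref{2412232304}) on $U(s)\phi$ — precisely the computation the paper carries out. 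The right-hand side tends to $0$ as $\tau\to\infty$ because $Q^{+}(f,f)\in L^{\tilde{\boldsymbol q}'}_tL^{\tilde{\boldsymbol r}'}_xL^{\tilde{\boldsymbol p}'}_\xi$ on $[0,\infty)$, which gives both continuity of $t\mapsto\int_0^t U(-s)Q^+(f,f)(s)\,ds$ in $L^d_{x,\xi}$ and existence of the limit $f^+$. You should replace the $L^1$-in-time claim by this duality argument.
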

\begin{proof}
For simplicity we write $L^{\boldsymbol{q}} _t L^{\boldsymbol{r}} _x L^{\boldsymbol{p}} _{\xi} = L^{\boldsymbol{q}} ([0, \infty); L^{\boldsymbol{r}} _x L^{\boldsymbol{p}} _{\xi})$.
For $(\boldsymbol{q}, \boldsymbol{r}, \boldsymbol{p}) \in \Lambda$, we define $(\tilde{\boldsymbol{q}}, \tilde{\boldsymbol{r}}, \tilde{\boldsymbol{p}})$ by $\tilde{\boldsymbol{r}} ' = \frac{{\boldsymbol{r}}}{2}$, $\tilde{\boldsymbol{q}} ' = \frac{{\boldsymbol{q}}}{2}$ and $\frac{1}{{\boldsymbol{p}}} + \frac{1}{{\boldsymbol{r}}} = \frac{1}{\tilde{\boldsymbol{p}} '} + \frac{1}{\tilde{\boldsymbol{r}} '}$. Since $(\boldsymbol{q}, \boldsymbol{r}, \boldsymbol{p}, d)$ is a KT-admissible quadruplet, $(\tilde{\boldsymbol{q}}, \tilde{\boldsymbol{r}}, \tilde{\boldsymbol{p}}, d')$ is also a KT-admissible quadruplet. We set
\begin{align*}
\Phi [f] = f_0 \circ e^{-tH_p} + \int_{0}^{t} Q^+ (f, f) \circ e^{-(t-s)H_p} ds
\end{align*}
and show that $\Phi$ is a contraction on $X_{\epsilon} = \{ f \in L^{\boldsymbol{q}} _t L^{\boldsymbol{r}} _x L^{\boldsymbol{p}} _{\xi} \mid \|f\|_{L^{\boldsymbol{q}} _t L^{\boldsymbol{r}} _x L^{\boldsymbol{p}} _{\xi} } < \epsilon \}$ for sufficiently small $\epsilon >0$.
Then by (\ref{2412232304}) and (\ref{2412232355}),
\begin{align*}
\| \Phi [f] \|_{L^{\boldsymbol{q}} _t L^{\boldsymbol{r}} _x L^{\boldsymbol{p}} _{\xi}} \lesssim \|f_0\|_{L^d _{x, \xi}} + \|Q^+ (f, f)\|_{L^{\tilde{\boldsymbol{q}} '} _t L^{\tilde{\boldsymbol{r}} '} _x L^{\tilde{\boldsymbol{p}} '} _{\xi}  }
\end{align*}
holds. Now we use the following inequality (Proposition 2.3 in {\cite{HJ1}}):
\begin{align*}
\|Q^{+} (f, g)\|_{L^{\tilde{\boldsymbol{p}} '} _{\xi}} \lesssim \|f\|_{L^{\boldsymbol{p}} _{\xi}} \|g\|_{L^{\boldsymbol{p}} _{\xi}}
\end{align*}
and obtain
\begin{align*}
\| \Phi [f] \|_{L^{\boldsymbol{q}} _t L^{\boldsymbol{r}} _x L^{\boldsymbol{p}} _{\xi}} \lesssim \|f_0\|_{L^d _{x, \xi}} + \| f \|^2 _{L^{\boldsymbol{q}} _t L^{\boldsymbol{r}} _x L^{\boldsymbol{p}} _{\xi}}.
\end{align*}
By a similar calculation, we also have an estimate for the difference:
\begin{align*}
\| \Phi [f] - \Phi [g] \|_{L^{\boldsymbol{q}} _t L^{\boldsymbol{r}} _x L^{\boldsymbol{p}} _{\xi}} \lesssim (\| f \| _{L^{\boldsymbol{q}} _t L^{\boldsymbol{r}} _x L^{\boldsymbol{p}} _{\xi}} + \| g\| _{L^{\boldsymbol{q}} _t L^{\boldsymbol{r}} _x L^{\boldsymbol{p}} _{\xi}}) \| f-g\|_{L^{\boldsymbol{q}} _t L^{\boldsymbol{r}} _x L^{\boldsymbol{p}} _{\xi}}.
\end{align*}
Hence if $\epsilon$ and $\|f_0\|_{L^d _{x, \xi}}$ are sufficiently small, $\Phi$ is a contraction on $X_{\epsilon}$. Next we show that the unique solution $f \in L^{\boldsymbol{q}} _t L^{\boldsymbol{r}} _x L^{\boldsymbol{p}} _{\xi}$ belongs to $C ([0, \infty); L^d _{x, \xi})$. Since $U(t)f_0 := f_0 \circ e^{-tH_p}$ satisfies $\| U(t)f_0 \|_{L^d _{x, \xi}} = \|f_0 \|_{L^d _{x, \xi}}$ (Liouville's theorem), $U(t)f_0 \in C ([0, \infty); L^d _{x, \xi})$ follows from the case $f_0 \in C^{\infty} _0 (T^* \R^d)$ (this is a consequence of the dominated convergence theorem). Moreover
\begin{align*}
\left| \left\langle \int_{\tau}^{t} U(-s) Q^{+} (f, f) (s)ds, \phi \right\rangle _{x, \xi} \right| &= \left| \int_{\tau}^{t} \left\langle Q^{+} (f, f) (s), U(s) \phi \right\rangle _{x, \xi} ds \right| \\
& \le \|U(t) \phi\|_{L^{\tilde{\boldsymbol{q}} } _t L^{\tilde{\boldsymbol{r}} } _x L^{\tilde{\boldsymbol{p}} } _{\xi}} \|Q^+ (f, f)\|_{L^{\tilde{\boldsymbol{q}} '} ([\tau, t]; L^{\tilde{\boldsymbol{r}} '} _x L^{\tilde{\boldsymbol{p}} '} _{\xi} ) } \\
& \lesssim \|\phi\|_{L^{\frac{d}{d-1}} _{x, \xi}} \|Q^+ (f, f)\|_{L^{\tilde{\boldsymbol{q}} '} ([\tau, t]; L^{\tilde{\boldsymbol{r}} '} _x L^{\tilde{\boldsymbol{p}} '} _{\xi} ) } 
\end{align*}
and duality argument yield
\begin{align*}
\left\| \int_{0}^{t} U(-s) Q^{+} (f, f) (s)ds - \int_{0}^{\tau} U(-s)Q^{+} (f, f) (s)ds \right\|_{L^d _{x, \xi}} \lesssim \|Q^+ (f, f)\|_{L^{\tilde{\boldsymbol{q}} '} ([\tau, t]; L^{\tilde{\boldsymbol{r}} '} _x L^{\tilde{\boldsymbol{p}} '} _{\xi} ) } \to 0
\end{align*}
as $\tau \to t$. This implies $t \mapsto \int_{0}^{t} U(-s) Q^{+} (f, f) (s)ds \in L^d _{x, \xi}$ is continuous and also $t \mapsto \int_{0}^{t} U(t-s) Q^{+} (f, f) (s)ds \in L^d _{x, \xi}$ is. Hence $f \in C ([0, \infty); L^d _{x, \xi})$. The above estimate also ensures the existence of
\begin{align*}
\int_{0}^{\infty} U(-s) Q^{+} (f, f) (s)ds = \lim_{t \to \infty} \int_{0}^{t} U(-s) Q^{+} (f, f) (s)ds
\end{align*} 
in $L^d _{x, \xi}$. Therefore
\begin{align*}
U(-t)f(t) = f_0 + \int_{0}^{t} U(-s) Q^{+} (f, f) (s)ds \to f_0 + \int_{0}^{\infty} U(-s) Q^{+} (f, f) (s)ds
\end{align*}
as $t \to \infty$ in $L^d _{x, \xi}$.
\end{proof}

\begin{remark}\label{25121715}
In Proposition \ref{25112308}, no assumption on the sign of $f_0$ and $f$ is made. If we further assume that $f_0 \ge 0$ holds, then the solution also satisfies $f \ge 0$ since $\Phi$ becomes a contraction in $X_{\epsilon, +} = \{ f \in L^{\boldsymbol{q}} _t L^{\boldsymbol{r}} _x L^{\boldsymbol{p}} _{\xi} \mid f \ge 0,  \|f\|_{L^{\boldsymbol{q}} _t L^{\boldsymbol{r}} _x L^{\boldsymbol{p}} _{\xi} } < \epsilon \}$.
\end{remark}
In order to deal with the full equation we need an additional assumption on $f_0$.
\begin{proposition}\label{25121810}
Assume $d=3$ and all the conditions in Proposition \ref{25112308}. We set $a_1 = \frac{15}{8}$. If we further assume $\|f_0\|_{L^{a_1} _{x, \xi}}$ is sufficiently small, the unique solution $f_{+}$ to (\ref{251123140}) constructed in Proposition \ref{25112308} satisfies 
\begin{align*}
\|f_+\|_{L^2 ([0, \infty); L^{\frac{30}{11}} _{x} L^{\frac{10}{7}} _{\xi})} \lesssim \|f_0\|_{L^{a_1} _{x, \xi}},
\end{align*}
where $(q_1, r_1, p_1, a_1) = (2, \frac{30}{11}, \frac{10}{7}, a_1)$ is a KT-admissible pair.
\end{proposition}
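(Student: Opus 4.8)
The plan is to insert the Duhamel representation of $f_{+}$ into the linear Strichartz estimates $(\ref{2412232304})$ and $(\ref{2412232355})$ together with a bilinear velocity estimate for the gain term $Q^{+}$, obtain a closed estimate, and then feed it back into the contraction of Proposition \ref{25112308}, now carried out in a space that also controls $L^{2}([0,\infty);L^{30/11}_{x}L^{10/7}_{\xi})$. First one checks that $(q_{1},r_{1},p_{1},a_{1})=(2,\tfrac{30}{11},\tfrac{10}{7},\tfrac{15}{8})$ is a non-endpoint KT-admissible quadruplet for $d=3$: $\hm(\tfrac{10}{7},\tfrac{30}{11})=\tfrac{15}{8}$, $\tfrac{1}{q_{1}}=\tfrac{3}{2}(\tfrac{7}{10}-\tfrac{11}{30})=\tfrac12$, the ordering $p_{*}(\tfrac{15}{8})=\tfrac{45}{32}\le\tfrac{10}{7}\le\tfrac{15}{8}\le\tfrac{30}{11}\le\tfrac{45}{16}=r_{*}(\tfrac{15}{8})$ holds, and it is not the endpoint since $q_{1}=2\ne\tfrac{15}{8}=a_{1}$. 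Hence both $(\ref{2412232304})$ and $(\ref{2412232355})$ are available with $(q_{1},r_{1},p_{1},a_{1})$ as the output exponent. Since $f_{+}=f_{0}\circ e^{-tH_{p}}+\int_{0}^{t}Q^{+}(f_{+},f_{+})\circ e^{-(t-s)H_{p}}\,ds$, estimate $(\ref{2412232304})$ gives $\|f_{0}\circ e^{-tH_{p}}\|_{L^{2}_{t}L^{30/11}_{x}L^{10/7}_{\xi}}\lesssim\|f_{0}\|_{L^{15/8}_{x,\xi}}$.

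For the Duhamel term I would apply $(\ref{2412232355})$ with output $(q_{1},r_{1},p_{1},a_{1})$ and an input dual quadruplet $(\tilde q,\tilde r,\tilde p,\tilde a)$ engineered so that its dual norm is realized by the output of $Q^{+}$. Fix $(\boldsymbol{q},\boldsymbol{r},\boldsymbol{p})\in\Lambda$ with $\tfrac1{\boldsymbol{p}}=\tfrac25$ (so $(\boldsymbol{q},\boldsymbol{r},\boldsymbol{p})=(5,\tfrac{15}{4},\tfrac52)\in\Lambda$), and use a soft-potential bilinear velocity estimate for the $\gamma=-1$, $d=3$ kernel (of the type used in Proposition \ref{25112308} from \cite{HJ1}), namely $\|Q^{+}(f,g)\|_{L^{30/13}_{\xi}}\lesssim\|f\|_{L^{10/7}_{\xi}}\|g\|_{L^{5/2}_{\xi}}$, followed by H\"older in $x$ ($L^{30/11}\cdot L^{15/4}$) and in $t$ ($L^{2}\cdot L^{5}$). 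A direct computation shows that the resulting dual quadruplet is $(\tilde q,\tilde r,\tilde p,\tilde a)=(\tfrac{10}{3},\tfrac{30}{11},\tfrac{30}{17},\tfrac{15}{7})$, which satisfies $\tfrac1{\tilde q}=\tfrac d2(\tfrac1{\tilde p}-\tfrac1{\tilde r})$, $\hm(\tilde p,\tilde r)=\tilde a$, the ordering $p_{*}(\tfrac{15}{7})=\tfrac{45}{28}\le\tfrac{30}{17}\le\tfrac{15}{7}\le\tfrac{30}{11}\le\tfrac{45}{14}=r_{*}(\tfrac{15}{7})$, and is not the endpoint; hence it is an admissible input for $(\ref{2412232355})$. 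This yields
\begin{align*}
\Bigl\| \int_{0}^{t} Q^{+}(f_{+},f_{+}) \circ e^{-(t-s)H_{p}}\, ds \Bigr\|_{L^{2}_{t} L^{30/11}_{x} L^{10/7}_{\xi}} &\lesssim \| Q^{+}(f_{+},f_{+}) \|_{L^{\tilde{q}'}_{t} L^{\tilde{r}'}_{x} L^{\tilde{p}'}_{\xi}} \\
&\lesssim \| f_{+} \|_{L^{2}_{t} L^{30/11}_{x} L^{10/7}_{\xi}}\, \| f_{+} \|_{L^{\boldsymbol{q}}_{t} L^{\boldsymbol{r}}_{x} L^{\boldsymbol{p}}_{\xi}}.
\end{align*}

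Combining the two bounds gives $\|f_{+}\|_{L^{2}_{t}L^{30/11}_{x}L^{10/7}_{\xi}}\lesssim\|f_{0}\|_{L^{15/8}_{x,\xi}}+\|f_{+}\|_{L^{\boldsymbol{q}}_{t}L^{\boldsymbol{r}}_{x}L^{\boldsymbol{p}}_{\xi}}\,\|f_{+}\|_{L^{2}_{t}L^{30/11}_{x}L^{10/7}_{\xi}}$, and since $\|f_{+}\|_{L^{\boldsymbol{q}}_{t}L^{\boldsymbol{r}}_{x}L^{\boldsymbol{p}}_{\xi}}<\epsilon$ is small the last factor can be absorbed once the left side is known to be finite. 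To make this rigorous I would rerun the fixed-point argument of Proposition \ref{25112308} in $X_{\epsilon}\cap\{g:\|g\|_{L^{2}_{t}L^{30/11}_{x}L^{10/7}_{\xi}}\le\delta\}$, which is closed in the metric of $X_{\epsilon}$ by Fatou and hence complete: the estimates above, together with bilinearity of $Q^{+}$ and the symmetric form of the $Q^{+}$ bound needed for the difference estimate, show that $\Phi$ maps this set into itself and contracts it for $\delta\sim\|f_{0}\|_{L^{15/8}_{x,\xi}}$ when $\epsilon$ is small, and by uniqueness of the fixed point in $X_{\epsilon}$ this fixed point is $f_{+}$. Therefore $f_{+}\in L^{2}([0,\infty);L^{30/11}_{x}L^{10/7}_{\xi})$ with $\|f_{+}\|_{L^{2}_{t}L^{30/11}_{x}L^{10/7}_{\xi}}\le\delta\lesssim\|f_{0}\|_{L^{15/8}_{x,\xi}}$, which is the claim (and the KT-admissibility of $(q_{1},r_{1},p_{1},a_{1})$ is the first paragraph above).

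The main obstacle is the exponent bookkeeping: one must exhibit a dual triple compatible at once with $(\ref{2412232355})$, with the target space $L^{2}_{t}L^{30/11}_{x}L^{10/7}_{\xi}$, and with the validity range of the soft-potential bilinear estimate for $Q^{+}$ --- the velocity exponents $(\tfrac{10}{7},\tfrac52,\tfrac{30}{13})$ must lie in the region where that estimate holds for $\gamma=-1$, $d=3$ --- and one must check that all the intermediate quadruplets are non-endpoint KT-admissible; the choice $\tfrac1{\boldsymbol{p}}=\tfrac25$ above is one that works, but the ordering conditions $p_{*}(\tilde a)\le\tilde p\le\tilde a\le\tilde r\le r_{*}(\tilde a)$ require a short verification. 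The a priori finiteness needed for the absorption step is a minor technical point handled by the contraction argument above.
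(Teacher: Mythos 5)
Your proposal is correct and follows essentially the same route as the paper: the paper also runs the contraction for $\Phi$ in the intersection space $Y_\epsilon = L^{\boldsymbol{q}}_t L^{\boldsymbol{r}}_x L^{\boldsymbol{p}}_\xi \cap L^{q_1}_t L^{r_1}_x L^{p_1}_\xi$, closing the estimate with the asymmetric bilinear bound $\|Q^+(f,g)\|_{L^{\tilde q_1'}_t L^{\tilde r_1'}_x L^{\tilde p_1'}_\xi} \lesssim \|f\|_{L^{\boldsymbol{q}}_t L^{\boldsymbol{r}}_x L^{\boldsymbol{p}}_\xi}\|g\|_{L^{q_1}_t L^{r_1}_x L^{p_1}_\xi}$ cited directly from Corollary~2.10 of [HJ2]. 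The only difference is presentational: you reassemble that mixed-Lebesgue bound from a velocity bilinear estimate plus H\"older in $x$ and $t$ (correctly, as your exponent bookkeeping matches the paper's $\tilde{p_1}'=\tfrac{30}{13}$, $\tilde{r_1}'=\tfrac{30}{19}$, $\tilde{q_1}'=\tfrac{10}{7}$ under the choice $(\boldsymbol{q},\boldsymbol{r},\boldsymbol{p})=(5,\tfrac{15}{4},\tfrac52)\in\Lambda$), whereas the paper invokes the cited mixed-norm bilinear estimate wholesale; you should still point to a precise reference for your claimed velocity estimate $\|Q^+(f,g)\|_{L^{30/13}_\xi}\lesssim\|f\|_{L^{10/7}_\xi}\|g\|_{L^{5/2}_\xi}$ rather than only alluding to ``the type used in Proposition~\ref{25112308}'', since the estimate there is the symmetric $L^p\times L^p$ version.
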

\begin{proof}
For $(\boldsymbol{q}, \boldsymbol{r}, \boldsymbol{p}) \in \Lambda$, we set $\frac{1}{\tilde{p_1}'} = \frac{1}{\boldsymbol{p}} + \frac{1}{30}$, $\frac{1}{\tilde{r_1}'} = \frac{1}{\boldsymbol{r}} + \frac{11}{30}$ and $\frac{1}{\tilde{q_1}'} = \frac{1}{\boldsymbol{q}} + \frac{1}{2}$. By Corollary 2.10 in \cite{HJ2}, we have
\begin{align*}
\|Q^{+} (f, g)\|_{L^{\tilde{q_1}'} _t L^{\tilde{r_1}'} _x L^{\tilde{p_1}'} _{\xi}} \lesssim \|f\|_{L^{\boldsymbol{q}} _t L^{\boldsymbol{r}} _x L^{\boldsymbol{p}} _{\xi}} \|g\|_{L^{q_1} _t L^{r_1} _x L^{p_1} _{\xi}}.
\end{align*}
Then we can prove that $\Phi$ in Proposition \ref{25112308} is a contraction on $Y_{\epsilon} = \{ f \in Y := L^{\boldsymbol{q}} _t L^{\boldsymbol{r}} _x L^{\boldsymbol{p}} _{\xi} \cap L^{q_1} _t L^{r_1} _x L^{p_1} _{\xi} \mid \|f\|_{Y} < \epsilon \}$ if $\epsilon >0$ and $\|f_0\|_{L^3 \cap L^{a_1} _{x, \xi}}$ are sufficiently small. Indeed, by the Strichartz estimates (\ref{2412232304}) and (\ref{2412232355}),
\begin{align*}
\|\Phi [f]\|_{L^{q_1} _t L^{r_1} _x L^{p_1} _{\xi}} &\lesssim \|f_0\|_{L^{a_1} _{x, \xi}} + \|Q^{+} (f, f)\|_{L^{\tilde{q_1}'} _t L^{\tilde{r_1}'} _x L^{\tilde{p_1}'} _{\xi}} \\
& \lesssim \|f_0\|_{L^{a_1} _{x, \xi}} + \|f\|_{L^{\boldsymbol{q}} _t L^{\boldsymbol{r}} _x L^{\boldsymbol{p}} _{\xi}} \|f\|_{L^{q_1} _t L^{r_1} _x L^{p_1} _{\xi}}
\end{align*}
holds. Combining with Proposition \ref{25112308}, we obtain
\begin{align*}
\|\Phi [f]\|_{Y} \lesssim \|f_0\|_{L^3 \cap L^{a_1} _{x, \xi}} + \|f\|^2 _{Y} \quad and \quad \|\Phi [f] - \Phi [g]\|_{Y} \lesssim (\|f\|_{Y} + \|g\|_{Y}) \|f-g\|_{Y},
\end{align*}
which indicates our assertion.
\end{proof}
\begin{lemma}\label{25141121}
We define a functional $L$ by $Q^{-} (f, g) = f L(g)$ for $f, g \in C^{\infty} _{0}$. Then
\begin{align}
\|L(g)\|_{L^{\boldsymbol{q} _1} _t L^{\boldsymbol{r} _1} _x L^{\boldsymbol{p} _1} _{\xi}} \lesssim \|g\|_{L^{q_1} _t L^{r_1} _x L^{p_1} _{\xi}} \label{25141333}
\end{align}
holds, where $(\boldsymbol{q} _1, \boldsymbol{r} _1, \boldsymbol{p} _1) = (2, \frac{30}{11}, 30)$.
\end{lemma}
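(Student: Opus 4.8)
The plan is to integrate out the angular variable $\omega$, which turns $L$ into a convolution operator in the velocity variable, and then to invoke the Hardy--Littlewood--Sobolev inequality.

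First I would use the rotational invariance of the sphere. By definition,
\[
L(g)(t, x, \xi) = \int_{\R^d} \int_{\mathbb{S}^{d-1}} g(t, x, \xi_{*}) B(\xi - \xi_{*}, \omega)\, d\omega\, d\xi_{*},
\]
and since $B(\xi - \xi_{*}, \omega) = |\xi - \xi_{*}|^{\gamma} b(\cos \theta)$ with $\cos \theta = \frac{(\xi - \xi_{*}) \cdot \omega}{|\xi - \xi_{*}|}$, the angular integral $\int_{\mathbb{S}^{d-1}} b\big( \tfrac{(\xi - \xi_{*}) \cdot \omega}{|\xi - \xi_{*}|} \big)\, d\omega$ equals, by rotational invariance, the constant $C_b := \int_{\mathbb{S}^{d-1}} b(\cos \theta)\, d\omega$ appearing in Grad's cut-off assumption; in particular it is independent of $\xi$ and $\xi_{*}$ and finite, the support and sign conditions on $b$ being irrelevant to finiteness. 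Specializing to $d = 3$, $\gamma = -1$ then gives
\[
L(g)(t, x, \xi) = C_b \int_{\R^3} \frac{g(t, x, \xi_{*})}{|\xi - \xi_{*}|}\, d\xi_{*} = C_b \big( |\cdot|^{-1} * g(t, x, \cdot) \big)(\xi),
\]
so that, up to the constant $C_b$, $L$ is convolution in the velocity variable alone with the kernel $|\xi|^{-1}$, parametrized by $(t, x)$.

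Next, for each fixed $(t, x)$ I would apply the Hardy--Littlewood--Sobolev inequality on $\R^3$: convolution with $|\xi|^{-1}$ maps $L^{10/7}(\R^3_{\xi})$ into $L^{30}(\R^3_{\xi})$, since $\frac{1}{30} = \frac{7}{10} - \frac{2}{3}$ and $1 < \frac{10}{7} < 30 < \infty$. This yields the pointwise-in-$(t, x)$ bound $\| L(g)(t, x, \cdot) \|_{L^{30}_{\xi}} \lesssim \| g(t, x, \cdot) \|_{L^{10/7}_{\xi}}$. Finally, since the time and space exponents on the two sides of (\ref{25141333}) coincide ($\boldsymbol{q}_1 = q_1 = 2$ and $\boldsymbol{r}_1 = r_1 = \frac{30}{11}$), I would take the $L^{30/11}_x$ norm and then the $L^2_t$ norm of this estimate, using monotonicity of these norms, to obtain
\[
\| L(g) \|_{L^2_t L^{30/11}_x L^{30}_{\xi}} \lesssim \| g \|_{L^2_t L^{30/11}_x L^{10/7}_{\xi}},
\]
which is (\ref{25141333}). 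I do not expect a genuine difficulty here; the only steps requiring a little care are the angular integration and the verification that the Hardy--Littlewood--Sobolev exponents are precisely those in the statement, and both reduce to elementary arithmetic with $d = 3$.
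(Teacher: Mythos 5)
Your proposal is correct and follows essentially the same route as the paper: integrate out $\omega$ using Grad's cut-off, apply Hardy--Littlewood--Sobolev with kernel $|\xi|^{-1}$ on $\R^3_\xi$ to pass from $L^{10/7}_\xi$ to $L^{30}_\xi$, and then carry this through trivially in $(t,x)$ since the outer exponents coincide. The only cosmetic difference is that the paper phrases HLS in its bilinear form by pairing $L(g)$ against a test function $f$ and concluding by duality, whereas you apply the convolution form of HLS directly to $L(g)$; these are equivalent.
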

\begin{proof}
By Grad's cut-off condition, for $f, g \in C^{\infty} _{0}$,
\begin{align*}
\left| \int fL(g) (t, x, \xi) dtdxd \xi \right| &= \left| \int f(t, x, \xi) \left( \int g(t, x, \xi _{*}) |\xi - \xi _{*}|^{-1} b(\cos \theta) d \omega d \xi _{*} \right) dtdxd\xi \right| \\
& \lesssim \left| \int f(t, x, \xi) \left( \int g(t, x, \xi _{*}) |\xi - \xi _{*}|^{-1} d \xi _{*} \right) dtdxd\xi \right| \\
& \lesssim \int \|f(t, x, \cdot)\|_{{\boldsymbol{p} _1}'} \|g(t, x, \cdot)\|_{p_1} dtdx \\
& \lesssim \|f\|_{L^{{\boldsymbol{q} _1}'} _t L^{{\boldsymbol{r} _1}'} _x L^{{\boldsymbol{p} _1}'} _{\xi}} \|g\|_{L^{\boldsymbol{q} _1} _t L^{\boldsymbol{r} _1} _x L^{p_1} _{\xi}}
\end{align*}
holds, where in the third line we have used the Hardy-Littlewood-Sobolev inequality. Since $q_1 = \boldsymbol{q} _1$ and $r_1 = \boldsymbol{r} _1$ hold, by the duality argument, we have (\ref{25141333}).
\end{proof}
As seen in the proof, $f_0 \ge 0$ is required to construct monotone sequences $\{g_n\}$ and $\{h_n\}$.
\begin{proof}[Proof of the existence of solution in Theorem \ref{25121757}]
(Step 1) Based on the Kaniel-Shinbrot iteration argument (\cite{KS, CDP, HJ2}) we set
\begin{align*}
&h_1 (t) \equiv 0, \quad g_1 (t) = U(t)f_0 + \int_{0}^{t} U(t-s) Q^{+} (f_{+}, f_{+}) (s)ds = f_+, \\
&h_2 (t) = U(t)f_0 e^{-\int_{0}^{t} U(t-s) L(g_1) (s)ds}, \quad g_2 (t) = U(t)f_0 + \int_{0}^{t} U(t-s) Q^{+} (g_1, g_1) (s)ds = f_+.
\end{align*}
Note that $h_2$ is well-defined since Proposition \ref{25121810} and Lemma \ref{25141121} yield $L(g_1) \in L^{\boldsymbol{q} _1} _t L^{\boldsymbol{r} _1} _x L^{\boldsymbol{p} _1} _{\xi}$. The important points are that $0 \le h_1 (t) \le h_2 (t) \le g_2 (t) \le g_1 (t) \le f_{+}$ and 
\[
\left\{
\begin{array}{l}
(\partial _{t} + H_p ) g_2 (t, x, \xi) + L(h_1)g_2 (t, x, \xi) = Q^{+} (g_1, g_1) (t, x, \xi) \\
(\partial _{t} + H_p ) h_2 (t, x, \xi) + L(g_1)h_2 (t, x, \xi) = Q^{+} (h_1, h_1) (t, x, \xi) \\
g_2 (0) = h_2 (0) = f_0 \\
\end{array}
\right.
\]
hold in the distributional sense. The first one is a consequence of Remark \ref{25121715}, $f_0 \ge 0$ and $b \ge 0$. Now we iteratively define
\begin{align*}
&h_{n+1} (t) = U(t)f_0 e^{-\int_{0}^{t} U(t-s) L(g_n)(s) ds} + \int_{0}^{t}  e^{-\int_{s}^{t} U(t-\tau) L(g_n)(\tau) d\tau} U(t-s) Q^{+} (h_n, h_n) (s) ds \\
&g_{n+1} (t) = U(t)f_0 e^{-\int_{0}^{t} U(t-s) L(h_n)(s) ds} + \int_{0}^{t}  e^{-\int_{s}^{t} U(t-\tau) L(h_n)(\tau) d\tau} U(t-s) Q^{+} (g_n, g_n) (s) ds.
\end{align*}
By definition, if we assume $0 \le h_n \le h_{n+1} \le g_{n+1} \le g_{n} \le f_+$, we obtain $0 \le h_n \le h_{n+1} \le h_{n+2} \le g_{n+2} \le g_{n+1} \le g_n \le f_+$. Hence $0 \le h_1 \le h_n \le h_{n+1} \le g_{n+1} \le g_{n} \le f_+$ holds for all $n \in \N$ by an induction argument. Notice that $h_{n+1}$ and $g_{n+1}$ satisty
\[
\left\{
\begin{array}{l}
(\partial _{t} + H_p ) g_{n+1} (t, x, \xi) + L(h_n)g_{n+1} (t, x, \xi) = Q^{+} (g_n, g_n) (t, x, \xi) \\
(\partial _{t} + H_p ) h_{n+1} (t, x, \xi) + L(g_n)h_{n+1} (t, x, \xi) = Q^{+} (h_n, h_n) (t, x, \xi) \\
g_{n+1} (0) = h_{n+1} (0) = f_0 \\
\end{array}
\tag{$B_n$}\label{25112314}
\right.
\]
in the distributional sense. By the monotonicity of $h_n$ and $g_n$ we have $h := \lim_{n \to \infty} h_n$ and $g := \lim_{n \to \infty} g_n$, which satisfy $0 \le h \le g \le f_+$. Hence the convergence is not only pointwise but also in $L^{\boldsymbol{q}} _t L^{\boldsymbol{r}} _x L^{\boldsymbol{p}} _{\xi} \cap L^{q_1} _t L^{r_1} _x L^{p_1} _{\xi}$ and in the distributional sense. We remark that a similar argument as in Proposition \ref{25112308} yields $Q^{+} (h, h), Q^{+} (g, g) \in L^{\tilde{\boldsymbol{q}} '} _t L^{\tilde{\boldsymbol{r}} '} _x L^{\tilde{\boldsymbol{p}} '} _{\xi}$ and Corollary 2.10 in \cite{HJ2} gives $Q^{\pm} (h, h), Q^{\pm} (g, g) \in L^{\tilde{q_1}'} _t L^{\tilde{r_1}'} _x L^{\tilde{p_1}'} _{\xi}$. Now taking $n \to \infty$ in (\ref{25112314}), we obtain
\[
\left\{
\begin{array}{l}
(\partial _{t} + H_p ) g (t, x, \xi) + L(h)g (t, x, \xi) = Q^{+} (g, g) (t, x, \xi) \\
(\partial _{t} + H_p ) h (t, x, \xi) + L(g)h (t, x, \xi) = Q^{+} (h, h) (t, x, \xi) \\
g (0) = h (0) = f_0 \\
\end{array}
\tag{$B_{\infty}$}\label{25161308}
\right.
\]
in the distributional sense. To prove $g \equiv h$, we set $w := g-h \ge 0$. Then we have
\begin{align}
(\partial _{t} + H_p ) w(t, x, \xi) = Q^{+} (g, w) + Q^{+} (w, h) + Q^{-} (g, w) - Q^{-} (w, g) \label{25151600}
\end{align}
and $w(0) = 0$. Suppose $\|w\|_{L^{q_1} _t L^{r_1} _x L^{p_1} _{\xi}} \neq 0$. Then there exists $T \in (0, \infty)$ such that $\|w\|_{L^{q_1} ([0, T]; L^{r_1} _x L^{p_1} _{\xi})}  > 0$. We set $T_0 = \inf \{ t \in [0, T] \mid \|w\|_{L^{q_1} ([0, t]; L^{r_1} _x L^{p_1} _{\xi})}  > 0\}$. By definition $w(t) =0$ if $t \in [0, T_0]$. By integrating (\ref{25151600}) we have
\begin{align*}
0 \le w(t) &= \int_{0}^{t} U(t-s) \left(Q^{+} (g, w) + Q^{+} (w, h) + Q^{-} (g, w) - Q^{-} (w, g) \right)(s) ds \\
& \le \int_{0}^{t} U(t-s) \left(Q^{+} (g, w) + Q^{+} (w, h) + Q^{-} (g, w) \right)(s) ds.
\end{align*}
Hence (\ref{2412232355}) yields
\begin{align*}
\|w\|_{L^{q_1} ([T_0, s]; L^{r_1} _x L^{p_1} _{\xi})} &\lesssim \|Q^{+} (g, w) + Q^{+} (w, h) + Q^{-} (g, w) \|_{L^{\tilde{q_1}'} ([T_0, s]; L^{\tilde{r_1}'} _x L^{\tilde{p_1}'} _{\xi} )} \\
& \lesssim ( \|g\|_{L^{\boldsymbol{q}} ([T_0, s]; L^{\boldsymbol{r}} _x L^{\boldsymbol{p}} _{\xi})} + \|h\|_{L^{\boldsymbol{q}} ([T_0, s]; L^{\boldsymbol{r}} _x L^{\boldsymbol{p}} _{\xi})})\|w\|_{L^{q_1} ([T_0, s]; L^{r_1} _x L^{p_1} _{\xi})}.
\end{align*}
Therefore if $s > T_0$ is sufficiently close to $T_0$, $\|w\|_{L^{q_1} ([T_0, s]; L^{r_1} _x L^{p_1} _{\xi})} \le \frac{1}{2} \|w\|_{L^{q_1} ([T_0, s]; L^{r_1} _x L^{p_1} _{\xi})}$ holds, which indicates $\|w\|_{L^{q_1} ([0, T_0 +s]; L^{r_1} _x L^{p_1} _{\xi})} =0$. This contradicts the definition of $T_0$ and hence $w \equiv 0$, i.e. $g \equiv h$. By (\ref{25161308}), $g$ satisfies $(\partial _{t} + H_p ) g =Q(g, g)$ and integrating this equation gives
\begin{align*}
g(t) = U(t)f_0 + \int_{0}^{t} U(t-s) Q(g, g) (s)ds.
\end{align*}
Now we show $g \in C([0, \infty); L^3 _{x, \xi})$. By Proposition \ref{25112308}, it suffices to show
\begin{align*}
v(t) = \int_{0}^{t} U(-s) Q^{-} (g, g) (s)ds \in C([0, \infty); L^3 _{x, \xi}).
\end{align*}
By a similar argument as in the proof of the continuity in Proposition \ref{25112308}, $v \in C([0, \infty); L^{\frac{15}{8}} _{x, \xi})$. Suppose $v(t)$ is not continuous as a $L^3 _{x, \xi}$-valued function at $t = \tilde{t}$. Then we have $\delta >0$ and $t_n \nearrow \tilde{t}$ or $t_n \searrow \tilde{t}$ such that $\|v(t_n)-v(\tilde{t})\|_{L^3 _{x, \xi}} \ge \delta$. Since $\mp (v(t_n)-v(\tilde{t}))$ is nonnegative and monotonously decreasing, there exists nonnegative $u \in L^3 _{x, \xi}$ satisfying $\mp (v(t_n)-v(\tilde{t})) \to u$ $a.e.$ and in $L^3 _{x, \xi}$ (Note $0 \le v(t) \le \int_{0}^{t} U(-s) Q^{+} (g, g) (s)ds \le \int_{0}^{\infty} U(-s) Q^{+} (g, g) (s)ds \in L^3 _{x, \xi}$). Therefore $\|u\|_{L^3 _{x, \xi}} \ge \delta$ holds. However $v \in C([0, \infty); L^{\frac{15}{8}} _{x, \xi})$ implies $\langle v(t_n) - v(\tilde{t}), \phi \rangle \to 0$ for $\phi \in C^{\infty} _{0}$ and hence $\langle u, \phi \rangle =0$. This contradicts $\|u\|_{L^3 _{x, \xi}} \ge \delta$ and we obtain $v \in C([0, \infty); L^3 _{x, \xi})$.

(Step 2) We show the uniqueness of the solution. Assume we have two solutions $g$ and $h$. Then $w=g-h$ satisfies
\[
\left\{
\begin{array}{l}
(\partial _{t} + H_p ) w + L(g) w = Q^{+} (g, w) + Q^{+} (w, h) - Q^{-} (h, w) \\
w (0) = 0. \\
\end{array}
\right.
\]
By integrating this equation we obtain
\begin{align*}
|w(t)| &= \left| \int_{0}^{t} e^{-\int_{s}^{t} U(t-\tau) L(g) (\tau) d\tau} U(t-s) [Q^{+} (g, w) + Q^{+} (w, h) - Q^{-} (h, w)](s) ds \right| \\
& \lesssim \int_{0}^{t} U(t-s) [Q^{+} (g, |w|) + Q^{+} (|w|, h) + Q^{-} (h, |w|)](s) ds.
\end{align*}
Then a similar argument as in (Step 1) yields $w \equiv 0$.
\end{proof}

\begin{proof}[Proof of the scattering in Theorem \ref{25121757}]
As in Proposition \ref{25112308}, it suffices to show that the outcome of the loss term: $\int_{0}^{t} U(-s) Q^{-} (g, g) (s)ds$ converges in $L^3 _{x, \xi}$ as $t \to \infty$ (The convergence of $\int_{0}^{t} U(-s) Q^{+} (g, g) (s)ds$ is essentially proven in Proposition \ref{25112308}). Since $\int_{0}^{t} U(-s) Q^{-} (g, g) (s)ds \le f_0 + \int_{0}^{\infty} U(-s) Q^{+} (g, g) (s)ds \in L^3 _{x, \xi}$, there exists $\tilde{u} \in L^3 _{x, \xi}$ such that
\begin{align*}
\int_{0}^{n} U(-s) Q^{-} (g, g) (s)ds \to \tilde{u} \quad as \quad n \to \infty
\end{align*}
in $L^3 _{x, \xi}$ and $a.e.$, where the monotonicity is also used. Then if $t >n$,
\begin{align*}
\left\| \tilde{u} - \int_{0}^{t} U(-s) Q^{-} (g, g) (s)ds \right\|_{L^3 _{x, \xi}} \le \left\| \tilde{u} - \int_{0}^{n} U(-s) Q^{-} (g, g) (s)ds \right\|_{L^3 _{x, \xi}}
\end{align*}
holds and this implies $\int_{0}^{t} U(-s) Q^{-} (g, g) (s)ds \to \tilde{u}$ as $t \to \infty$.
\end{proof}
\begin{remark}\label{25171551}
The Strichartz estimates also have applications to chemotaxis systems. Here we comment on a model of chemotaxis by Othmer, Dunbar and Alt (see \cite{BCGP} for details):
\[
\left\{
\begin{array}{l}
\partial _{t} f(t, x, \xi) +H_p f(t, x, \xi) = \int_{\Xi} (T[S]f' - T^* [S]f) d \xi' \\
f(0, x, \xi) = f_0 (x, \xi) \\
S-\Delta S = \rho := \int_{\Xi} f(t, x, \xi) d\xi.
\end{array}
\right.
\tag{Ch}\label{252131409}
\]    
Here $f' = f(t, x, \xi')$, $T^* [S] (t, x, \xi, \xi') =T[S] (t, x, \xi', \xi)$ and $\Xi \subset \R^3$ is a bounded $3$-dimensional region. If $p(x, \xi) = |\xi|^2$, small data global existence is proved under various assumptions on $T[S]$ in \cite{BCGP}. Once local well-posedness of (\ref{252131409}) is proved, we can prove small data global existence for $p(x, \xi) = g^{ij} (x) \xi_{i} \xi_{j}$ under the condition \cite[(1.5) or (1.6)]{BCGP} on $T[S]$, where $g$ is a nontrapping scattering metric. The proof is just using the Strichartz estimates proved in this paper like \cite[Theorem 3]{BCGP} and we omit details.
\end{remark}
\appendix
\section{Estimate on oscillatory integral}\label{251252117}
In this section we give a proof of the following lemma for the sake of completeness since \cite{V} p.204 seems to be insufficient in the present case.
\begin{lemma}\label{251252125}
If $\re \omega = -1$, the following estimate holds uniformly in $\epsilon \in (0, 1)$.
\begin{align*}
\|\mathcal{F} [t^{\omega} \chi_{\{\epsilon < |t|< \frac{1}{\epsilon}\}}]\|_{\infty} \lesssim \langle \im \omega \rangle e^{\pi |\im \omega|}.
\end{align*}
\end{lemma}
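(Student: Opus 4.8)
The plan is to write $\omega = -1 + i\tau$ with $\tau := \im\omega \in \R$ and to first reorganize the integral so as to expose a cancellation between the parts $t>0$ and $t<0$, which is exactly what makes the estimate uniform in the cutoff $\epsilon$. Since the chosen branch satisfies $\log(-s) = \log s + i\pi$ for $s>0$, one has $(-s)^\omega = e^{\omega \log s}\, e^{i\pi\omega} = -e^{-\pi\tau}\, s^\omega$, where $s^\omega = s^{-1} e^{i\tau\log s}$ has modulus $s^{-1}$. Substituting this and changing variables $t \mapsto -t$ on $\{t<0\}$ gives
\begin{equation*}
\mathcal{F}\bigl[t^\omega \chi_{\{\epsilon < |t| < 1/\epsilon\}}\bigr](\xi) = \int_{\epsilon}^{1/\epsilon} \bigl(e^{-it\xi} - e^{-\pi\tau} e^{it\xi}\bigr)\, t^{-1+i\tau}\, dt .
\end{equation*}
The key observation — and the reason the argument in \cite{V} does not suffice here — is that neither half-line integral is bounded uniformly in $\epsilon$ (each diverges logarithmically near $t=0$ when $\xi$ is small), but in the combined integrand the singular factor $t^{-1}$ is multiplied near $t=0$ by the bracket, which equals $(1-e^{-\pi\tau}) + O(t)$; since $t^{-1+i\tau}$ has the bounded antiderivative $t^{i\tau}/(i\tau)$, this renders the near-origin contribution harmless. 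In particular the case $\xi = 0$ is immediate: the bracket is the constant $1-e^{-\pi\tau}$, and $\bigl|\mathcal{F}[t^\omega\chi](0)\bigr| = |1-e^{-\pi\tau}|\,|\epsilon^{-i\tau}-\epsilon^{i\tau}|/|\tau| \le 2|1-e^{-\pi\tau}|/|\tau| \lesssim e^{\pi|\tau|}$, where I use $|1-e^{-\pi\tau}| \le \pi|\tau|\, e^{\pi|\tau|}$ (from $1 - e^{-\pi\tau} = -\int_0^{-\pi\tau} e^{s}\, ds$).

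For $\xi \ne 0$ I would rescale $t = u/|\xi|$; since $\bigl|\,|\xi|^{-i\tau}\bigr| = 1$, the bound reduces to showing $|K(a,b)| \lesssim \langle\tau\rangle\, e^{\pi|\tau|}$ \emph{uniformly} in $0 < a < b < \infty$, where $K(a,b) = \int_a^b (e^{-i\sigma u} - e^{-\pi\tau} e^{i\sigma u})\, u^{-1+i\tau}\, du$ and $\sigma = \sgn \xi \in \{\pm 1\}$; by symmetry one may take $\sigma = 1$. I would split the $u$-integral at $u = 1$. On $[a, \min(b,1)]$ I write $e^{-iu} - e^{-\pi\tau} e^{iu} = (1-e^{-\pi\tau}) + (e^{-iu}-1) - e^{-\pi\tau}(e^{iu}-1)$: the first term yields $(1-e^{-\pi\tau}) \int_a^{\min(b,1)} u^{-1+i\tau}\, du$, of modulus at most $|1-e^{-\pi\tau}|\cdot 2/|\tau| \lesssim e^{\pi|\tau|}$ (and $\le |1-e^{-\pi\tau}|$ when $\tau=0$), while the other two terms are absolutely integrable since $|e^{\mp iu}-1| \le u$, contributing $\le 1 + e^{-\pi\tau} \lesssim e^{\pi|\tau|}$. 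On $[\max(a,1), b]$ I integrate by parts once in $u$, using $e^{\mp iu} = \pm i\, \frac{d}{du} e^{\mp iu}$: the boundary terms are controlled by $u^{-1} \le 1$, hence $\lesssim 1$, and the remaining integral is bounded by $|-1+i\tau| \int_1^\infty u^{-2}\, du = \langle\tau\rangle$; together with the factor $e^{-\pi\tau} \le e^{\pi|\tau|}$ on the $e^{iu}$ piece this gives $\lesssim \langle\tau\rangle\, e^{\pi|\tau|}$. Adding the two regions yields $|K(a,b)| \lesssim \langle\tau\rangle\, e^{\pi|\tau|}$ with constants independent of $a, b$, hence of $\xi$ and $\epsilon$, which is the claim.

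The only real obstacle is the opening move: recognizing that uniformity in $\epsilon$ forces one to combine the two half-lines via $(-s)^\omega = -e^{-\pi\tau} s^\omega$, so that the non-integrable $t^{-1}$ at the origin is multiplied by the constant $1 - e^{-\pi\tau}$ rather than appearing alone. Everything after that is routine — a single integration by parts for $u \ge 1$ and the elementary inequality $|e^{\mp iu}-1| \le u$ for $u \le 1$ — and the exponential weight $e^{\pi|\tau|}$ is simply the cost of $|(-s)^\omega| = e^{-\pi\tau} s^{-1}$ together with $|1-e^{-\pi\tau}| \le \pi|\tau|\, e^{\pi|\tau|}$.
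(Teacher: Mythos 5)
Your proof is correct and follows essentially the same route as the paper: split at $|t|=1$, integrate by parts for $|t|>1$, and subtract the value at $t=0$ for $|t|<1$, with the branch formula $(-s)^\omega=-e^{-\pi\tau}s^\omega$ producing the $e^{\pi|\tau|}$ weight. The only difference is organizational — you fold the two half-lines into a single integral over $[\epsilon,1/\epsilon]$ at the outset so the $(1-e^{-\pi\tau})$ cancellation is visible immediately, whereas the paper keeps the $|t|$-notation and recovers the same factor when computing $\int_{\xi\epsilon<|t|<1}t^{-1+i\gamma}\,dt$ explicitly.
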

\begin{proof}
It suffices to show the bound
\begin{align*}
\left| \int_{\epsilon <|x|< \frac{1}{\epsilon}} e^{ix \xi} \frac{dx}{x^{1-i\gamma}} \right| \lesssim \langle \gamma \rangle e^{\pi |\gamma |}
\end{align*}
uniformly in $\epsilon \in (0, 1)$, $\gamma \in \R$ and $\xi \in \R$. We assume $\xi >0$ for simplicity ($\xi <0$ can be treated similarly).Then $\left| \int_{\epsilon <|x|< \frac{1}{\epsilon}} e^{ix \xi} \frac{dx}{x^{1-i\gamma}} \right| = \left| \int_{\xi \epsilon < |t| < \xi / \epsilon} e^{it} \frac{dt}{t^{1-i\gamma}} \right|$ holds. If $\xi / \epsilon >1$,
\begin{align*}
\left| \int_{1<|t|< \frac{\xi}{\epsilon}} \frac{e^{it}}{t^{1-i\gamma}} dt \right| = \left| \int_{1<|t|< \frac{\xi}{\epsilon}} \left( \frac{d}{dt} e^{it} \right)\frac{dt}{t^{1-i\gamma}} \right| \lesssim e^{\pi |\gamma |} + \langle \gamma \rangle \left| \int_{1<|t|< \frac{\xi}{\epsilon}} \frac{e^{it}}{t^{2-i\gamma}} dt \right| \lesssim \langle \gamma \rangle e^{\pi |\gamma |}
\end{align*} 
holds by integrating by parts. If $\xi \epsilon <1$, we decompose $ \int_{\xi \epsilon<|t|< 1} \frac{e^{it}}{t^{1-i\gamma}} dt = \int_{\xi \epsilon<|t|< 1} \frac{e^{it} -1}{t^{1-i\gamma}} dt + \int_{\xi \epsilon<|t|< 1} \frac{1}{t^{1-i\gamma}} dt$. The integrand in the first term is bounded by $e^{\pi |\gamma|}$. The second term is estimated as $\left| \int_{\xi \epsilon<|t|< 1} \frac{1}{t^{1-i\gamma}} dt \right| = \left| \frac{1-e^{-\pi \gamma}}{\gamma} + \frac{(\xi \epsilon)^{i\gamma} (e^{-\pi \gamma} -1)}{\gamma} \right| \lesssim e^{\pi | \gamma |}$. The other cases are similarly done so we omit details. 
\end{proof}

\section*{Acknowledgement}
The author would like to thank Kenichi Ito, Haruya Mizutani and Kouichi Taira for useful comments and discussions. He is partially supported by FoPM, WINGS Program, the University of Tokyo.

\address{Graduate School of Mathematical Sciences, The University of Tokyo, 3-8-1 Komaba, Meguro-ku, Tokyo 153-8914, Japan}

\email{Email address: hoshiya@ms.u-tokyo.ac.jp}

\end{document}